\renewcommand{\Im}{\operatorname{Im}}
\newcommand{\diag}{\operatorname{diag}}
\newcommand{\dist}{\operatorname{dist}}
\newcommand{\esssupp}{\operatorname{ess-supp}}
\newcommand{\hess}{\operatorname{Hess}}
\newcommand{\Id}{\operatorname{Id}}
\newcommand{\Op}{\operatorname{Op}}
\newcommand{\rank}{\operatorname{Rank}}
\newcommand{\supp}{\operatorname{supp}}
\newcommand{\vect}{\operatorname{Vect}}
\newcommand{\vol}{\operatorname{vol}}
\def\<{\langle}
\def\>{\rangle}
\def\one{\mathds{1}}
\newcommand{\Subsection}[1]{\subsection{ #1} ${}^{}$}
\newcommand{\A}{\mathcal A}
\newcommand{\B}{\mathcal B}
\newcommand{\D}{\mathcal D}
\newcommand{\F}{\mathcal F}
\newcommand{\J}{\mathcal J}
\newcommand{\U}{\mathcal U}
\newcommand{\C}{\mathbb C}
\newcommand{\N}{\mathbb N}
\newcommand{\R}{\mathbb R}
\newcommand{\M}{\mathscr M}
\newcommand{\be}{\begin{equation}}
\newcommand{\ee}{\end{equation}}
\newcommand{\bes}{\begin{equation*}}
\newcommand{\ees}{\end{equation*}}
\newcommand{\Dh}{d_{\phi,h}}
\newcommand{\Dhs}{d_{\phi,h}^*}
\renewcommand{\d}{\operatorname{d}}
\numberwithin{equation}{section}
\numberwithin{figure}{section}
\newtheorem{theorem}{Theorem}[section]
\newtheorem{corollary}[theorem]{Corollary}
\newtheorem{lemma}[theorem]{Lemma}
\newtheorem{definition}[theorem]{Definition}
\newtheorem{proposition}[theorem]{Proposition}
\newtheorem{remark}[theorem]{Remark}
\newtheorem{hyp}{Hypothesis}
\def\aaa{{\mathcal A}}
\def\eee{{\mathcal E}}\def\fff{{\mathcal F}} \def\hhh{{\mathcal H}}
 \def\lll{{\mathcal L}}
\def\mmm{{\mathcal M}} \def\ooo{{\mathcal O}}
\def\sss{{\mathcal S}}
\def\uuu{{\mathcal U}}
\def\C{\mathbb C}   \def\G{\mathbb G} 
 \def\N{\mathbb N}    
 \def\R{\mathbb R}   
\def\D{\Xi}
\def\d{\textbf{d}}
\newcommand{\T}{{\bf T}}
\def\m{\mathbf{m}}
\def\s{\mathbf{s}}
\def\u{\mathbf{u}}
\begin{document}
\title{Tunnel effect for semiclassical random walk}

\author[J.-F. Bony]{Jean-Fran\c{c}ois~Bony}
\address{Institut  Math\'ematiques de Bordeaux \\ Universit\'e de Bordeaux}
\email{bony@\allowbreak math.u-bordeaux1.\allowbreak fr}

\author[F. H\'erau]{Fr\'ed\'eric~H\'erau}
\address{Laboratoire de Math\'ematiques Jean Leray \\ Universit\'e de Nantes}
\email{ frederic.herau@\allowbreak univ-nantes.\allowbreak fr}

\author[L. Michel]{Laurent~Michel}
\address{Laboratoire J.-A. Dieudonn\'e\\ Universit\'e de Nice}
\email{lmichel@\allowbreak unice.\allowbreak fr}

\begin{abstract}
We study  a semiclassical random walk with respect to a probability measure with a finite number $n_0$ of wells. We show that the associated operator has  exactly $n_0$ exponentially close to $1$ eigenvalues (in the semiclassical sense), and that the other are $\ooo(h)$ away from $1$. We also give an asymptotic of these small eigenvalues.
The key ingredient in our approach is a general factorization result of pseudodifferential operators, which allows us to use recent results on the Witten Laplacian.
\thanks{J.-F. Bony and F. H\'erau are supported by the ANR project NOSEVOL, ANR 2011 BS01019 01. L. Michel is member of the ERC project: Semi Classical Analysis of Partial Differential
Equations,
ERC-2012-ADG, project number 320845}

\end{abstract}
\maketitle

\section{Introduction} \label{z6}
Let  $\phi:\R^d\rightarrow\R$ be a smooth function and let $h\in]0,1]$ denote a small parameter in all the paper.
Under suitable assumptions specified later, the density $e^{-\phi(x)/h}$ is integrable and there exists $Z_h>0$ such that $d\mu_h(x)=Z_he^{-\phi(x)/h}dx$ defines a probability measure on $\R^d$. We can associate to $\mu_h$ the Markov kernel $t_h(x,dy)$ given by
\begin{equation}\label{e1}
t_h(x,dy)=\frac 1{\mu_h(B(x,h))}\one_{\vert x-y\vert<h}d\mu_h(y).
\end{equation}

From the point of view of random walks, this kernel can be understood  as follows: assume at step $n$, the walk is in $x_n$, then the point $x_{n+1}$ is
choosen in the small ball $B(x_n,h)$, uniformly at random with respect to $d\mu_h$. The probability distribution at time $n\in\N$ of a walk starting from $x$ is given by
the kernel $t_h^n(x,dy)$. The long time behavior ($n\rightarrow \infty$) of the kernel $t_h^n(x,dy)$ carries informations on the ergodicity of the random walk,
and has many practical applications (we refer to \cite{LeRoSt10_01} for an overview of computational aspects).
Observe that if $\phi$ is a Morse function, then  the density $e^{-\phi/h}$ concentrates at scale $\sqrt{h}$ around minima of $\phi$, whereas the moves of
the random walk are at scale $h$.

Another point of view comes from statistical physics and can be described as follows. One can associate to the kernel $t_h(x,dy)$ an operator $\T_h$ acting on the space $C_0$ of continuous functions
going to zero at infinity, by the formula
\begin{equation*}
\T_hf(x)=\int_{\R^d}f(y)t_h(x,dy)=\frac 1{\mu_h(B(x,h))}\int_{\vert x-y\vert<h}f(y)d\mu_h(y).
\end{equation*}
This defines a bounded operator on $C_0$, enjoying the Markov property $(\T_h(1)=1)$.

The transpose $\T_h^\star$ of $\T_h$  is defined by duality on the set of bounded positive measures ${\bf M}^+_b$ (resp. bounded measures ${\bf M}_b$).
If $d\nu$ is a bounded measure we have
\begin{equation}\label{e2}
\T_h^\star(d \nu)= \bigg(  \int_{\R^d} \one_{\vert x-y\vert<h} \mu_h(B(y,h))^{-1} d\nu(y) \bigg) d\mu_h.
\end{equation}
Assume that a particle in $\R^d$ is distributed according to a probability measure $d\nu$, then $\T_h^\star(d\nu)$ represents its distribution after a move according to $t_h(x,dy)$, and
the distribution after $n$ steps is then given by  $(\T_h^\star)^n(d\nu)$. The existence of a limit distribution is strongly related to the existence of an invariant measure.
In the present context, one can easily see that $\T_h^\star$ admits the following invariant measure
\begin{equation*}
d\nu_{h,\infty}(x)=\widetilde Z_h\mu_h(B(x,h))d\mu_h(x),
\end{equation*}
where $\widetilde Z_h$ is chosen so that $d\nu_{h,\infty}$ is a probability. The aim of the present paper will be to prove the convergence of
$(\T_h^\star)^n(d\nu)$ towards $d\nu_{h,\infty}$ when $n$ goes to infinity, for any probability measure $d\nu$, and to get precise informations on the  speed of convergence.
Taking $d\nu(y)=\delta_x(y)$, it turns out that this is equivalent to study the convergence of $t_h^n(x,dy)$ towards $d\nu_{h,\infty}$.
Observe that in the present setting, proving pointwise convergence ($h$ being fixed) of $t_h^n(x,dy)$ towards the invariant measure is an easy consequence of some general theorem
(see \cite{Fe71_01}, Theorem 2, p272). The interest of our approach is to get convergence in a stronger topology and to obtain precise information on the behavior with respect to the semiclassical parameter $h$.

Before going further, let us recall some elementary properties of $\T_h$ that will be usefull in the sequel. First, we can see easily from its definition that the operator
$\T_h$ can be extended as a bounded operator both on $L^\infty(d\nu_{h,\infty})$ and $L^1(d\nu_{h,\infty})$. From the Markov property and the fact that $d\nu_{h,\infty}$ is stationary
it is clear that
\begin{equation*}
\|\T_h\|_{L^\infty(d\nu_{h,\infty})\rightarrow L^\infty(d\nu_{h,\infty})}= \|\T_h\|_{L^1(d\nu_{h,\infty})\rightarrow L^1(d\nu_{h,\infty})}=1.
\end{equation*}
Hence, by interpolation $\T_h$ defines  also a bounded operator of norm $1$ on $L^2(\R^d,d\nu_{h,\infty})$. Finally, observe that
$\T_h$ is selfadjoint on $L^2(d\nu_{h,\infty})$ (thanks again to Markov property).

\medskip

Let us go back to the study of the sequence $(\T_h^\star)^n$ and explain the topology we use to study the convergence of this sequence. Instead of looking at this evolution on the full set of bounded measures,
we restrict the analysis by introducing the following stable Hilbert space
\begin{equation} \label{a10}
\hhh_h = L^2 ( d \nu_{h , \infty} ) = \Big\{ f \text{ measurable on } \R^{d} \text{ such that } \int |f(x)|^2 \, d \nu_{h , \infty}  < \infty \Big\}.
\end{equation}
for which we have a natural injection with norm $1$,
$
 \J:\hhh_h  \hookrightarrow {\bf M}_b,
$
when identifying an absolutely continuous measure $d\nu_h = f(x) d\nu_{h,\infty}$ with its density $f$.
Using \eqref{e2}, we can see easily that $\T_h^\star\circ\J=\J\circ \T_h$. From this identification $\T_h^\star$ (acting on $\hhh_h$) inherits the properties
of $\T_h$:
\begin{equation} \label{a9}
\T^{\star}_{h} : \hhh_h \longrightarrow \hhh_h \text{ is selfadjoint and continuous with operator norm } 1 .
\end{equation}
 Hence, its spectrum is contained in the interval $[-1,1]$. Moreover, we will see later that $(-1)$ is sufficiently far from the spectrum. Since we are interested in the convergence of $(\T_h^\star)^n$ in $L^2$ topology, it is then sufficient
for our purpose to  give a precise description of the spectrum of $\T_h$ near $1$.

\medskip

Convergence of Markov chains to stationary distribution is a wide area of research and applications.
Knowing that a computable Markov kernel converges to a given distribution may be very useful in practice. In particular it is often used to sample
a given probability in order to implement Monte-Carlo methods (see  \cite{LeRoSt10_01} for numerous algorithms and computational aspects). However, most of results giving a priori bound on the speed of convergence for such algorithms holds for discrete state space
 (we refer to  \cite{Di09_01} for a state of the art on Monte-Carlo-Markov-Chain methods).

 This point of view is also used to track extremal points of any function by simulated annealing procedure. For example, this was used in \cite{HoSt88_01} on finite state space and in
 \cite{HoKuSt89_01}, \cite{Mi92_01} on continuous state space.

Eventually, let us recall that the study of time continuous processes is of current interest in statistical physics (see for instance the work of Bovier-Gayrard-Klein on metastable states \cite{BoEcGaKl04_01,BoGaKl05_01}).

More recently, Diaconis-Lebeau obtained first results on discrete time processes on continuous state space \cite{DiLe09_01}.
This approach was then further developed in \cite{DiLeMi11_01} to get convergence results on the Metropolis algorithm on bounded domains of the Euclidean space.
Similar results were also obtained in \cite{LeMi10_01},  \cite{GuMi11_01} in various geometric situations.
In all these papers, the probability $d\mu_h$ is independant of $h$ which leads \emph{in fine} to a spectral gap of order $h^2$.
Here, the situtation is quite different and somehow ``more semiclassical''. This permits to exhibit situations with very small spectral gap of order
$e^{-c/h}$. In this paper, we shall compute accurately this spectral gap under the following asumptions on $\phi$.

\begin{hyp}\sl \label{h1} We suppose that $\phi$ is a Morse function, with non-degenerate critical points, and that there exists $c,R>0$ and some constants $C_\alpha>0$, $\alpha\in\N^d$ such that for all $|x|\geq R$, we have
\begin{equation*}
\forall \alpha \in \N^d \setminus \{ 0 \} ,  \qquad \vert \partial_x^\alpha \phi ( x ) \vert \leq C_\alpha , \quad \vert \nabla \phi (x) \vert \geq c \quad \text{and} \quad \vert \phi (x) \vert \geq c \vert x \vert .
\end{equation*}
  In particular, there is a finite number of critical points.
 \end{hyp}

 Observe that functions $\phi$ satisfying this assumption are at most linear at infinity. It may be possible  to relax this assumption to quadratic growth at infinity and we guess our results hold true also in this context. However, it  doesn't seem possible to get a complete proof with the class of symbols used in this paper.

 Under the above assumption, it is clear that $d\mu_h(x)=Z_he^{-\phi(x)/h}dx$ is a probability measure.
 For the following we call $\uuu$ the set of critical points $\u$. We denote by $\uuu^{(0)}  $
 the set of minima of $\phi$  and $\uuu^{(1)} $ the set of  saddle points, i.e. the critical points with index $1$  (note that this set may be empty). We also introduce
 $n_j=\sharp\, \uuu^{(j)}  $, $j=0,1$, the number of elements of $\uuu^{(j)}$.

We shall prove first the following result:

\begin{theorem}\sl \label{a1}
There exist $\delta , h_{0} > 0$ such that the following assertions hold true for
$h \in ] 0 , h_{0} ]$. First, $\sigma ( \T_h^\star ) \subset [ - 1 + \delta , 1 ]$ and $\sigma_{ess} ( \T_h^\star ) \subset [ - 1 + \delta , 1 - \delta ]$.
Moreover, $\T_h^\star$ has exactly $n_{0}$ eigenvalues in $[ 1 - \delta h , 1]$ which are in fact in $[ 1 - e^{- \delta / h} , 1]$.
Eventually, $1$ is a simple eigenvalue for the eigenstate $\nu_{h,\infty}\in\hhh_h$.
\end{theorem}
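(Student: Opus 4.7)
The plan is to reduce the spectral analysis of $\T_h^\star$ near $1$ to the low-lying spectrum of the Witten Laplacian $\Dhs \Dh$ on $0$-forms and then invoke the sharp eigenvalue asymptotics available for the latter. The first move is to remove the weight by the unitary map $U_h : \hhh_h \to L^2(\R^d, dx)$ of multiplication by $\sqrt{\widetilde Z_h\, \mu_h(B(x,h))}\, Z_h^{1/2}\, e^{-\phi(x)/(2h)}$, so that $\widehat \T_h := U_h \T_h^\star U_h^{-1}$ is a bounded self-adjoint operator on $L^2(dx)$ with the same spectrum as $\T_h^\star$.

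The second step is to express $\widehat \T_h$ as a semiclassical pseudodifferential operator and to produce the factorization advertised in the abstract. Starting from
\begin{equation*}
\T_h f(x) = \frac{\int_{\vert z \vert < 1} f(x+hz)\, e^{-\phi(x+hz)/h}\, dz}{\int_{\vert z \vert < 1} e^{-\phi(x+hz)/h}\, dz}
\end{equation*}
and Taylor-expanding $\phi(x+hz) = \phi(x) + h\, z\cdot \nabla \phi(x) + \tfrac{h^2}{2} z^T \hess \phi(x) z + \cdots$, one identifies $\widehat \T_h$ as an $h$-pseudodifferential operator with principal symbol $t_0(x,\xi)$ smooth, satisfying $\vert t_0 \vert \leq 1$, and reaching $1$ exactly on $\{ \xi = 0 \} \cap \{ \nabla \phi(x) = 0 \}$. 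Pushing the expansion one order further should yield a factorization of the form
\begin{equation*}
\Id - \widehat \T_h = \alpha_d\, h^2\, \Dhs \Dh + h^2 R_h ,
\end{equation*}
where $\alpha_d > 0$ is an explicit constant and $R_h$ is a pseudodifferential remainder whose full symbol vanishes on $\uuu \times \{ 0 \}$. Establishing this factorization with sharp control of $R_h$ in the appropriate symbol class is, I expect, the main obstacle: the averaging in $\T_h$ is \emph{non-local} on scale $h$, so matching it with the differential operator $\Dhs \Dh$ uniformly in $h$, while preserving the critical-point structure of $\phi$, requires careful symbol calculus that is the core technical contribution of the paper.

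Granted the factorization, the global spectral statements follow from now standard arguments. Off a neighbourhood of $\uuu$, Hypothesis~\ref{h1} gives $\vert \nabla \phi \vert \geq c > 0$, so the symbol of $\Id - \widehat \T_h$ is elliptic and bounded below; a sharp G\r{a}rding inequality combined with an IMS-type localization yields $\sigma(\T_h^\star) \subset [-1+\delta, 1]$ and $\sigma_{\mathrm{ess}}(\T_h^\star) \subset [-1+\delta, 1-\delta]$. Near critical points, harmonic approximation for the Witten Laplacian shows that $\Dhs \Dh$ has exactly $n_0$ eigenvalues in $[0, \varepsilon h]$, one per minimum of $\phi$, whereas saddle points contribute only eigenvalues of size $\gtrsim h$. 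Transporting this back through the factorization gives exactly $n_0$ eigenvalues of $\T_h^\star$ in $[1-\delta h, 1]$, and invoking the Helffer--Klein--Nier (or Helffer--Sj\"ostrand) exponentially small eigenvalue theory for $\Dhs \Dh$ refines their localization to $[1-e^{-\delta/h}, 1]$.

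Finally, that $1$ is an eigenvalue is immediate: $\T_h \one = \one$ by the Markov property, so via the identification $\J$ the measure $\nu_{h,\infty}$ is a fixed vector of $\T_h^\star$ in $\hhh_h$. Simplicity is then a Perron--Frobenius argument: $\T_h$ is positivity-preserving and, under Hypothesis~\ref{h1}, irreducible, since any two points of $\R^d$ can be joined by a finite chain of overlapping balls of radius $h$, each of positive $\mu_h$-measure. Hence $\ker(\Id - \T_h^\star) = \C \cdot \nu_{h,\infty}$, completing the statement.
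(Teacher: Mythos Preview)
Your overall architecture matches the paper's: conjugate to $L^2(dx)$, exhibit $1-\widehat\T_h$ as a semiclassical $\Psi$DO, relate it to the Witten Laplacian, and quote the known low-lying spectrum of the latter; the Perron--Frobenius argument for simplicity is also essentially the paper's. However, the factorization you write is structurally wrong in two ways, and this is where the actual content of the theorem lives.

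First, there is no $h^2$. The semiclassical Witten Laplacian $\Dhs\Dh$ already carries the $h$'s through $d_{\phi,h} = h\,\d + d\phi\wedge$, and its principal symbol is $|\xi|^2 + |\nabla\phi|^2$. Near a critical point $\u$ the principal symbol of $1-\widehat\T_h$ is $\beta_d\big(|\xi|^2 + |\nabla\phi|^2\big) + \ooo(|(x-\u,\xi)|^4)$ with $\beta_d = (2d+4)^{-1}$, so the correct local comparison is $1-\widehat\T_h \approx \beta_d\,\Dhs\Dh$, not $h^2$ times it. With your scaling the $n_0$ Witten eigenvalues of size $\ooo(h)$ would sit at distance $\ooo(h^3)$ from $1$, contradicting Theorem~\ref{e3}.

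Second, and more seriously, the approximation cannot be global: $1-\widehat\T_h$ is a \emph{bounded} operator (symbol in $S^0(1)$, since $G(\xi)\to 0$ at infinity), whereas $\Dhs\Dh$ is unbounded with quadratically growing symbol. Writing $1-\widehat\T_h = \beta_d\,\Dhs\Dh + R_h$ therefore forces $R_h$ to be unbounded and of the same size as $\Dhs\Dh$ for large $|\xi|$; there is no smallness to exploit, and ``transporting back'' the Witten spectrum through such a remainder is not a valid step. The paper's device (Theorem~\ref{e4} and Corollary~\ref{e8}) is instead an \emph{exact} factorization
\begin{equation*}
1-\widehat\T_h \;=\; a_h\, d_{\phi,h}^*\, Q^* Q\, d_{\phi,h}\, a_h \;=\; L_\phi^* L_\phi ,
\end{equation*}
with $Q\in\Psi^0(\Xi\A)$ a matrix pseudodifferential operator that is $\beta_d^{1/2}\Id + \ooo(|(x-\u,\xi)|^2)$ near each $(\u,0)$ but decays in $\xi$ so that the composite is bounded. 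This exact form is what drives the eigenvalue count in Proposition~\ref{b32}: one gets $\langle P^{(0)} u, u\rangle = \|L_\phi u\|^2 \geq \varepsilon_0\,\|\Op(\Xi^{-1}) d_{\phi,h} a_h u\|^2$, and then passes to $P^{W,(0)}(P^{W,(0)}+1)^{-1}$ via the intertwining relation $(P^{W,(1)}+1)^{-1/2} d_{\phi,h} = d_{\phi,h}(P^{W,(0)}+1)^{-1/2}$. None of this is available from a perturbative identity with an uncontrolled $R_h$.

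A minor point: your argument for $\sigma\subset[-1+\delta,1]$ invokes ellipticity away from $\uuu$, but the lower bound near $-1$ has nothing to do with the critical points of $\phi$; it comes purely from the Fourier-multiplier fact $G(\R^d)\subset[-1+\nu,1]$ (Lemma~\ref{a7}) together with G\aa rding.
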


 This theorem will be proved in the next section.
 The goal of this paper is to describe accurately the eigenvalues  close to $1$. We will see later, that describing the eigenvalues of $\T_h^\star$ close to $1$
 has many common points with the spectral study of the so-called 
 semiclassical Witten Lapacian (see section 4). We introduce the following generic assumptions on the critical points of $\phi$.

\begin{hyp}\sl \label{h2}
We suppose that the values $\phi ( \s ) - \phi ( \m )$ are distinct for any $\s\in \uuu^{(1)}$ and $\m\in \uuu^{(0)}$.
 \end{hyp}

 Note now that this generic assumption could easily be relaxed, under rather loud additional notations and less precise statements, following e.g. \cite{HeHiSj11_01}, and that we chose to focus in this article on other particularities of the problem we study.

 Let us recall that under the above assumptions, there exists a labeling of minima and saddle points: $\uuu^{(0)} = \{ \m_k ; \ k = 1 , \ldots , n_{0} \} $
  and $\uuu^{(1)} = \{ \s_j ; \ j=2, \ldots , n_1+1 \}$ which permits  to describe the low liying eigenvalues of the Witten Laplacian (see \cite{HeKlNi04_01}, \cite{HeHiSj11_01} for instance). Observe that the
  enumeration of $\uuu^{(1)}$ starts with $j=2$ since we will need a fictive saddle point $\s_1=+\infty$.
 We shall recall this labeling procedure in the appendix.

 Let us denote $1=\lambda_1^\star(h)> \lambda_2^\star(h)\geq\cdots \geq \lambda^\star_{n_0}(h)$ the $n_0$ largest eigenvalues of $\T_h^\star$.
  The main result of this paper is the following

\begin{theorem}\sl \label{e3}
Under Hypotheses \ref{h1} and \ref{h2}, there exists a labeling of minima and saddle points and constants $\alpha , h_0>0$ such that, for all $k = 2, \ldots , n_0$ and for any $h \in ] 0 , h_0 ]$,
\begin{equation*}
 1- \lambda_k^\star(h) = \frac{h}{(2d+4)\pi}\mu_k\sqrt{\bigg\vert \frac{\det \phi''(\m_k)}{\det \phi''(\s_k)} \bigg\vert} e^{- 2 S_k/ h}(1+\ooo(h)) ,
\end{equation*}
where $S_k : = \phi(\s_k) - \phi(\m_k)$ (Aarhenius number) and $-\mu_k$ denotes the unique negative eigenvalue of $\phi''$ at $\s_k$.
\end{theorem}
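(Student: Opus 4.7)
The overall strategy is to import the Eyring--Kramers asymptotics established for the Witten Laplacian $\Delta^{(0)}_{\phi,h}$ in \cite{HeKlNi04_01,HeHiSj11_01} by matching the low-lying spectrum of $1-\T_h^\star$ with that of a rescaled Witten Laplacian. The entire combinatorial content (labeling, dependence on $\s_k, \m_k$) is then inherited from the Witten case, and only a universal rescaling by $\tfrac{h^2}{2(d+2)}$ has to be tracked.

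The first step is to exploit the ``general factorization result of pseudodifferential operators'' announced in the abstract (developed in Section~4) in order to realize $1-\T_h^\star$, up to a unitary conjugation from $\hhh_h$ to $L^2(\R^d,dx)$, as a supersymmetric product
\begin{equation*}
1-\T_h^\star = A_h^* A_h,
\end{equation*}
where $A_h$ is an $h$-pseudodifferential operator whose principal part coincides, up to a conjugation by the Gaussian weight $e^{-\phi/(2h)}$, with $\tfrac{1}{\sqrt{2(d+2)}}\,\Dh$ (acting from $0$-forms to $1$-forms). The normalisation $\tfrac{1}{\sqrt{2(d+2)}}$ is dictated by the second moment of the uniform measure on the unit ball: writing $y = x + h\zeta$ with $|\zeta|\leq 1$ and using $\int_{|\zeta|<1}\zeta_i\zeta_j\,d\zeta = \tfrac{|B(0,1)|}{d+2}\,\delta_{ij}$, a Taylor expansion of the integral defining $\T_h f(x)-f(x)$ at a critical point exhibits the leading contribution $\tfrac{h^2}{2(d+2)}\Delta f(x)$. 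This is the whole origin of the $2d+4$ in the final formula.

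Next I would transport the Helffer--Klein--Nier / Helffer--H\'erau--Sj\"ostrand machinery to the operator $A_h$. Using the labelling of minima $\m_k$ and saddles $\s_k$ recalled in the appendix (and legitimised by Hypothesis~\ref{h2}), one constructs the usual $1$-cell quasi-modes supported in Agmon neighbourhoods of the descent paths joining $\m_k$ to $\s_k$, of the form $c_k \nabla\chi_k\, e^{-\phi/h}$ for suitable cut-offs $\chi_k$. Thanks to the rough spectral gap of Theorem~\ref{a1}, these $n_0$ quasi-modes span, modulo exponentially small errors, the spectral subspace associated with the $n_0$ smallest eigenvalues of $1-\T_h^\star = A_h^* A_h$, and the singular values of $A_h$ on this basis are evaluated by two Gaussian Laplace expansions: one at $\m_k$ producing $|\det\phi''(\m_k)|^{1/2}\,e^{-\phi(\m_k)/h}$ and one at $\s_k$ producing $|\det\phi''(\s_k)|^{-1/2}\,e^{\phi(\s_k)/h}$ together with the factor $\mu_k$ coming from the unstable direction. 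Combining these two Laplace contributions with the overall prefactor $\tfrac{1}{2(d+2)}$ inherited from Step~1 reproduces exactly
\begin{equation*}
\frac{h}{(2d+4)\pi}\,\mu_k\,\sqrt{\bigg|\frac{\det\phi''(\m_k)}{\det\phi''(\s_k)}\bigg|}\,e^{-2S_k/h}(1+\ooo(h)).
\end{equation*}

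The main obstacle is the factorisation itself: the kernel of $\T_h^\star$ involves the non-smooth indicator $\one_{|x-y|<h}$ and the weight $\mu_h(B(x,h))^{-1}$, neither of which is a classical $h$-pseudodifferential symbol. One has to introduce an ad-hoc symbol class accommodating both the step scale $h$ of the walk and the concentration scale $\sqrt{h}$ of $d\mu_h$ near the wells, and to check that the residual after factorisation is of order $e^{-c/h}$ uniformly on the low-energy subspace; this is the technical heart of the paper. Once this factorisation is available and its principal symbol is identified with a rescaled $\Dh$, the rest of the proof is a careful but essentially routine adaptation of the Witten-Laplacian argument of \cite{HeKlNi04_01,HeHiSj11_01}, with Theorem~\ref{a1} ensuring that only $n_0$ quasi-modes are needed.
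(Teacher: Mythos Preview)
Your strategy is broadly correct and matches the paper's: factorize $P_h = 1 - T_h$ (after unitary conjugation to $L^2(dx)$) as $L_\phi^* L_\phi$ with $L_\phi \approx \beta_d^{1/2} d_{\phi,h}$ near critical points, then import the Witten-Laplacian computation of \cite{HeKlNi04_01}. Your identification of the $\tfrac{1}{2d+4}$ factor via the second moment of the unit ball is also right.

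Two points need correction, though.

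First, your description of the main obstacle is off. The indicator $\one_{|x-y|<h}$ is not a problem: its semiclassical Fourier transform $G(\xi)$ is entire (Lemma~\ref{a7}), so $P_h$ is a perfectly ordinary operator in $\Psi^0(1)$ with symbol analytic in $\xi$; no exotic two-scale class is needed, and no $\sqrt{h}$ scale enters the symbolic calculus. The real work in the factorization (Theorem~\ref{e4}) is algebraic: showing that an even symbol annihilating $e^{-\phi/h}$ can be written \emph{exactly} as $d_{\phi,h}^* Q^* Q d_{\phi,h}$ with $Q$ in a controlled pseudodifferential class, not merely up to an exponentially small remainder as you suggest.

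Second, and more importantly, your singular-value step is incomplete. Writing the quasimodes as $c_k \nabla\chi_k\,e^{-\phi/h}$ and doing two Laplace expansions gets you the \emph{diagonal} entries $\|L_\phi f_k^{(0)}\|^2$, but you still need to show that the matrix $(\langle L_\phi e_k^{(0)}, L_\phi e_{k'}^{(0)}\rangle)_{k,k'}$ on $E^{(0)}$ is diagonal up to errors $\ooo(e^{-(S_k+\alpha')/h})$. The paper does this by building a companion operator $P^{(1)}$ on $1$-forms (equation~\eqref{b1}, with an added elliptic term since $L_\phi L_\phi^*$ alone is not elliptic), constructing $1$-form quasimodes $f_j^{(1)} = \beta_d^{1/2}(Q^*)^{-1} f_j^{W,(1)}$ localized at the saddle points $\s_j$, and then using the intertwining relation $L_\phi P^{(0)} = P^{(1)} L_\phi$ to obtain $L_{j,k} = \langle e_j^{(1)}, L_\phi e_k^{(0)}\rangle = \beta_d^{1/2}\langle f_j^{W,(1)}, d_{\phi,h} f_k^{W,(0)}\rangle + \ooo(e^{-(S_k+\alpha')/h})$ (Lemmas~\ref{d7}--\ref{d12}). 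The off-diagonal smallness then comes for free from the disjoint supports of the $f_j^{W,(1)}$. Your proposal skips this $1$-form machinery entirely, and without it the control of the off-diagonal terms---and hence the identification of the \emph{individual} eigenvalues rather than merely their sum or product---is not justified.
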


\begin{remark}\sl
Observe that the leading term in the asymptotic of $1- \lambda_k^\star(h)$ above is exactly (up to the factor $(2d+4)$) the one of the $k$-th eigenvalue of the Witten Laplacian on the $0$-forms
obtained in \cite{HeKlNi04_01}. This relationship will be transparent from the proof below.
\end{remark}

As an immediate consequence of these results and of the spectral theorem, we get that the convergence to equilibrium holds slowly and that the system has a metastable regime. More precisely, we have the following result whose proof can be found at the end of Section \ref{z4}.

\begin{corollary}\sl \label{z5}
Let $d \nu_{h}$ be probability measure in $\hhh_{h}$ and assume first that $\phi$ has a unique minimum. Then, using that $\sigma ( \T_{h}^{\star} ) \subset [ - 1 + \delta , 1 - \delta h ]$, it yields
\begin{equation} \label{z3}
\big\Vert ( \T_{h}^{\star} )^{n} ( d \nu_{h} ) - d \nu_{h , \infty} \big\Vert_{\hhh_h} = \ooo (h) \Vert d \nu_{h} \Vert_{\hhh_h} .
\end{equation}
for all $n \gtrsim \vert \ln h \vert h^{-1}$ which corresponds to the Ehrenfest time. But, if $\phi$ has now several minima, we can write
\begin{equation} \label{z2}
( \T_{h}^{\star} )^{n} ( d \nu_{h} ) =  \Pi d \nu_{h} + \ooo (h) \Vert d \nu_{h} \Vert_{\hhh_h} ,
\end{equation}
for all $h^{-1} \vert \ln h \vert \lesssim n \lesssim e^{2S_{n_0}/ h}$. Here, $\Pi$ can be taken as the orthogonal projector on the $n_{0}$ functions $\chi_{k} ( x ) e^{- ( \phi (x) - \phi ( \m_{k} ) ) / h}$ where $\chi_{k}$ is any cutoff function near $\m_{k}$.

On the other hand, we have, for any $n \in \N$,
\begin{equation} \label{z1}
\big\Vert ( \T_{h}^{\star} )^{n} ( d \nu_{h} ) - d \nu_{h , \infty} \big\Vert_{\hhh_h} \leq ( \lambda_{2}^{\star} (h) )^{n} \Vert d \nu_{h} \Vert_{\hhh_h} ,
\end{equation}
where $\lambda_{2}^{\star} (h)$ is described in Theorem \ref{e3}. Note that this inequality is optimal. In particular, for $n \gtrsim \vert \ln h \vert h^{-1} e^{2 S_{2} / h}$, the right hand side of \eqref{z1} is of order $\ooo (h) \Vert d \nu_{h} \Vert_{\hhh_h}$.

Thus, for a reasonable number of iterations (which guaranties \eqref{z3}), $1$ seems to be an eigenvalue of multiplicity $n_{0}$; whereas, for a very large number of iterations, the system returns to equilibrium. Then, \eqref{z2} is a metastable regime.
\end{corollary}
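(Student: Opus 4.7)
The plan is to reduce everything to the spectral theorem applied to the self-adjoint operator $\T_h^\star$ on $\hhh_h$, using Theorems \ref{a1} and \ref{e3} as black boxes. Let $\Pi_0$ be the orthogonal projector onto the $n_0$-dimensional sum of eigenspaces corresponding to the eigenvalues in $[1-e^{-\delta/h},1]$ furnished by Theorem \ref{a1}, and set $Q = \Id - \Pi_0$. By that theorem, the restriction $\T_h^\star |_{\mathrm{Range}(Q)}$ has spectrum in $[-1+\delta, 1-\delta h]$, so
\begin{equation*}
\big\Vert (\T_h^\star)^n Q \big\Vert_{\hhh_h \to \hhh_h} \leq \max(1-\delta, 1-\delta h)^n = (1-\delta h)^n \leq e^{-\delta h n}.
\end{equation*}
Hence for $n \geq C |\ln h| h^{-1}$ with $C \geq \delta^{-1}$, this is $\ooo(h)$.

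For the action on $\mathrm{Range}(\Pi_0)$, I expand in an orthonormal eigenbasis $(e_k)_{k=1,\ldots,n_0}$ and note that $(\T_h^\star)^n \Pi_0 - \Pi_0 = \sum_{k\geq 2} ((\lambda_k^\star)^n - 1)\langle \cdot, e_k\rangle e_k$. For any $k\geq 2$, the estimate $|(\lambda_k^\star)^n - 1| \leq n(1-\lambda_k^\star)$ combined with Theorem \ref{e3} and the ordering $S_k \geq S_{n_0}$ gives
\begin{equation*}
\big| (\lambda_k^\star)^n - 1 \big| \leq C n h\, e^{-2 S_{n_0}/h} = \ooo(h)
\end{equation*}
uniformly in $k$, provided $n \lesssim e^{2S_{n_0}/h}$. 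Consequently $\Vert (\T_h^\star)^n \Pi_0 - \Pi_0 \Vert = \ooo(h)$, and combining the two pieces yields
$(\T_h^\star)^n d\nu_h = \Pi_0 d\nu_h + \ooo(h) \Vert d\nu_h\Vert_{\hhh_h}$ in the indicated range of $n$, proving \eqref{z2}.

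For \eqref{z3}, the case $n_0 = 1$ means $\Pi_0$ is the rank-one projector onto $\mathrm{span}(1)$ where $1 \in \hhh_h$ corresponds to $d\nu_{h,\infty}$. Because $d\nu_h$ is a probability measure, $\langle d\nu_h, 1\rangle_{\hhh_h} = \int d\nu_h = 1$, so $\Pi_0 d\nu_h = d\nu_{h,\infty}$ and the conclusion follows. For \eqref{z1}, I decompose $d\nu_h = d\nu_{h,\infty} + g$ with $g := d\nu_h - d\nu_{h,\infty} \perp 1$, so $\Vert g\Vert \leq \Vert d\nu_h\Vert$. Since $1$ is a simple eigenvalue (Theorem \ref{a1}) and for $h$ small $\lambda_2^\star(h) > 1-\delta \geq |{-1+\delta}|$, the operator norm of $\T_h^\star$ on $\{1\}^\perp$ is $\lambda_2^\star(h)$. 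The spectral theorem then yields $\Vert (\T_h^\star)^n g\Vert \leq (\lambda_2^\star(h))^n \Vert g\Vert \leq (\lambda_2^\star(h))^n \Vert d\nu_h\Vert$, which is \eqref{z1}; substituting the asymptotic from Theorem \ref{e3} shows the bound is $\ooo(h)$ as soon as $n \gtrsim |\ln h| h^{-1} e^{2S_2/h}$.

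The one non-routine step is identifying $\Pi_0$ with the orthogonal projector $\Pi$ onto the span of the local quasimodes $\chi_k(x) e^{-(\phi(x)-\phi(\m_k))/h}$ in \eqref{z2}. These functions are approximate eigenvectors of $\T_h^\star$ for eigenvalue $1$ with residual $\ooo(e^{-\varepsilon/h})$ (this is exactly the quasimode input underlying the proof of Theorem \ref{e3}), so a standard almost-orthogonality and resolvent argument shows $\Vert \Pi_0 - \Pi\Vert_{\hhh_h \to \hhh_h} = \ooo(e^{-\varepsilon/h})$, which is absorbed in the $\ooo(h)$ error. This replacement is the main technical point of the corollary and draws directly on the quasimode construction used earlier in the paper.
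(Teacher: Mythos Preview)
Your argument follows the same architecture as the paper's proof: split via the spectral projector $\Pi_0$ onto the $n_0$ small eigenvalues, control $(\T_h^\star)^n$ on the complement by $(1-\delta h)^n$, control $(\T_h^\star)^n\Pi_0-\Pi_0$ using $|(\lambda_k^\star)^n-1|\leq n(1-\lambda_k^\star)$ and Theorem~\ref{e3}, and finally replace $\Pi_0$ by the quasimode projector $\Pi$. Your treatment of \eqref{z3} and \eqref{z1} is correct and essentially identical to the paper's.

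The one inaccuracy is your claim that $\Vert\Pi_0-\Pi\Vert=\ooo(e^{-\varepsilon/h})$. The paper only obtains $\ooo(h)$ here, and that is the correct order. The reason is that the functions $\chi_k e^{-(\phi-\phi(\m_k))/h}$ appearing in the statement are \emph{not} the exponentially good quasimodes $f_k^{(0)}=a_h^{-1}f_k^{W,(0)}$ for $P^{(0)}$; they differ by the multiplicative factor $a_h^{-1}$, and since $a_h^{-1}-1=\ooo(h+|x-\m_k|^2)$ near $\m_k$, this produces an $\ooo(h)$ discrepancy in $L^2$ (cf.\ the estimate \eqref{z9}). The paper tracks this explicitly through the chain $g_k\to f_k^{W,(0)}\to f_k^{(0)}\to e_k^{(0)}$ using Lemma~\ref{d5}, arriving at $g_k=e_k^{(0)}+\ooo(h)$ and hence $\Pi=\Pi^{(0)}+\ooo(h)$. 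Your ``standard almost-orthogonality and resolvent argument'' would give the exponential bound only for the \emph{exact} quasimodes $f_k^{(0)}$, not for the ones in the corollary statement. Since only $\ooo(h)$ is required, the conclusion is unaffected, but the justification you give for this last step does not quite work as stated.
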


Since $t_h(x,dy)$ is absolutely continuous with respect to $d\nu_{h,\infty}$, then $(\T_h^\star)^n(\delta_{y=x})=t_h^{n}(x,dy)$ belongs to $\mathcal{H}_h$ for any $n\geq 1$. Hence, the above estimate and the fact that $d\nu_{h,\infty}$ is invariant show that
\begin{equation*}
\big\Vert t_h^n(x,dy)-d\nu_{h,\infty} \big\Vert_{\mathcal{H}_h}\leq ( \lambda_2^\star (h) )^{n-1}\Vert t_h(x,dy)\Vert_{\mathcal{H}_h}.
\end{equation*}
Moreover the prefactor $\Vert t_h(x,dy)\Vert_{\mathcal{H}_h}$ could be easily computed but depends on $x$ and $h$.

\medskip

Throughout this paper, we use semiclassical analysis (see \cite{DiSj99_01}, \cite{Ma02_01}, or \cite{Zw12_01} for expository books of  this theory).
  Let us recall that a function $m:\R^d\rightarrow \R^+$ is an order function if there exists $N_0\in\N$ and a constant $C>0$ such that
  for all $x,y\in\R^d,\,m(x)\leq C\<x-y\>^{N_0}m(y)$. This definition can be extended to functions  $m:\R^d\times\C^{d'}\rightarrow\R^+$ by identifying $\R^d\times\C^{d'}$ with
  $\R^{d+2d'}$.
Given on order function $m$ on $T^*\R^d\simeq\R^{2d}$, we will denote by $S^0(m)$ the space of semiclassical symbols on $T^*\R^d$ whose all derivatives are bounded by $m$ and
$\Psi^0(m)$ the set of corresponding pseudodifferential operators. For any $\tau\in]0,\infty]$, and any order function $m$ on $\R^d\times\C^d$
we will denote by $S^0_\tau(m)$ the set of symbols which are
analytic with respect  to $\xi$ in the strip $\vert \Im \xi\vert<\tau$ and bounded by some constant times $m(x,\xi)$ in this strip.
We will denote by $S^0_\infty(m)$ the union for $\tau>0$ of $S^0_\tau(m)$.
We denote by $\Psi^0_\tau(m)$ the set of corresponding operators.
Eventually, we say that  a symbol $p$ is classical if it admits  an asymptotic expansion $p(x,\xi;h)\sim\sum_{j\geq 0}h^jp_j(x,\xi)$. We will denote by $S^0_{\tau,cl}(m)$, $S^0_{cl}(m)$ the corresponding class of symbols.

 We will also need some matrix valued pseudodifferential operators.
 Let $\M_{p,q}$ denote the set of real valued matrices with $p$ rows and $q$ columns and $\M_p=\M_{p,p}$.
 Let $\A:T^*\R^d\rightarrow \M_{p,q}$ be a smooth function. We will say that $\A$ is a $(p,q)$-matrix-weight
 if $\A(x,\xi)=(a_{i,j}(x,\xi))_{i,j}$ and for any $i=1\ldots,p$ and $j=1,\ldots,q$, $a_{i,j}$ is an order function.
 If $p=q$, we will simply say that $\A$ is $q$-matrix-weight.

 Given a $(p,q)$-matrix-weight  $\A$, we  will denote by $S^0(\A)$ the set of symbols $p(x,\xi)=(p_{i,j}(x,\xi))_{i,j}$ defined on $T^*\R^d$ with
values in $\M_{p,q}$ such that for all $i,j$, $p_{i,j}\in S^0(a_{i,j})$ and $\Psi^0(\M_{p,q})$ the set of corresponding pseudodifferential operators.
Obvious extensions of this definitions leads to the definition of matrix valued symbol analytic w.r.t. to $\xi$ and the corresponding operators: $S^0_\tau(\A)$ and $\Psi_\tau^0(\A)$.
In the following, we shall mainly use the Weyl semiclassical quantization of symbols,
defined by
\be
\Op(p)u(x)=(2\pi h)^{-d}\int_{T^*\R^d} e^{ih^{-1}(x-y)\xi}p(\frac {x+y}2,\xi)u(y)dyd\xi
\ee
for $p \in S^0(\A)$. We shall also use the following notations all along the paper. Given two pseudo differential operators $A$ and $B$, we shall write
$A=B+\Psi^k(m)$ if the difference $A-B$ belongs to $\Psi^k(m)$. At the level of symbols, we shall write $a=b+S^k(m)$ instead of $a-b\in S^k(m)$.

\medskip

The preceding theorem is  close - in the spirit and in the proof -   to the ones given for the Witten Laplacian in \cite{HeKlNi04_01} and for
the Kramers Fokker Planck operators in \cite{HeHiSj11_01}. In those works, the results are deeply linked with some properties inherited from a so-called supersymmetric structure, allowing to write the operators as twisted Hodge Laplacians of the form
\begin{equation*}
P = d_{\phi, h}^* A d_{\phi, h}
\end{equation*}
where $\d$ is  the usual differential, $d_{\phi, h} = h\d + d \phi(x) \wedge = e^{-\phi/h} h \d e^{\phi/h}$ is the  differential twisted by $\phi$, and $A$ is a constant matrix in $\M_d$.
Here we are able to recover a supersymmetric type structure, and the main ingredients for the study of the exponentially small eigenvalues are therefore available. This is contained in the following theorem, that we give in rather general context since it may be useful in other situations.

Let us introduce  the $d$-matrix-weight, $\D,\aaa:T^*\R^d\rightarrow \M_d$ given by $\aaa_{i,j}(x,\xi)=(\<\xi_i\>\<\xi_j\>)^{-1}$,
$\Xi_{i,j}=\delta_{i,j}\<\xi_i\>$ and observe that  $(\Xi\A)_{i,j}=\<\xi_j\>^{-1}$. In the following theorem, we state an exact factorization result which will be the key point in our approach.

\begin{theorem}\sl \label{e4}
Let $p(x,\xi;h)\in S^0_\infty(1)$ be a real valued symbol such that $p(x,\xi;h)=p_0(x,\xi)+S^0(h)$  and let $P_h=\Op(p)$.
Let $\phi$ satisfy Hypotheses \ref{h1} and \ref{h2} and assume that the following assumptions hold true:
\begin{enumerate}[i)]
\item $P_h(e^{-\phi/h})=0$,
\item for all $x\in\R^d$, the function $\xi\in\R^d\mapsto p(x,\xi;h)$ is even,
  \item $\forall\delta>0,\ \exists\alpha>0, \ \forall (x,\xi)\in T^*\R^d, \quad (d(x,\uuu)^2+|\xi|^2\geq\delta\Longrightarrow p_0(x,\xi)\geq\alpha)$,
 \item for  any critical point $\u \in \uuu$ we have
\begin{equation*}
p_0(x,\xi)=|\xi|^2+|\nabla\phi(x)|^2+r(x,\xi),
\end{equation*}
with $r(x,\xi)=\ooo(|(x-\u ,\xi)|^3)$ near $(\u,0)$.
\end{enumerate}
Then, for $h>0$ small enough, there exists  symbol $q\in S^0(\D\aaa)$ satisfying the following properties.

First 
 $P_h=d_{\phi,h}^*Q^*Qd_{\phi,h}$ with 
$Q=\Op(q)$.
Next, $q(x,\xi;h)=q_0(x,\xi)+S^0(h\Xi\A)$ and
for any critical point  $\u \in \uuu$, we have
\begin{equation*}
q_0(x,\xi)=\Id+\ooo(\vert(x-\u ,\xi)\vert).
\end{equation*}
If we assume additionally that $r(x,\xi)=\ooo(|(x-\u ,\xi)|^4)$, then $q_0(x,\xi)=\Id+\ooo(|(x-\u,\xi)|^2)$ near $(\u,0)$ for any critical point  $\u \in \uuu$.

Eventually, if $p\in S^0_{cl}(1)$ then $q\in S^0_{cl}(\D\aaa)$.

\end{theorem}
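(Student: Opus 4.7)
The plan is to proceed in two stages. First, I would construct a positive self-adjoint matrix-valued pseudodifferential operator $A = \Op(a) \in \Psi^0(\A)$ such that the factorization $P_h = \Dhs A \Dh$ holds exactly, and then define $Q := A^{1/2}$ through the semiclassical functional calculus (for instance via a Helffer--Sj\"ostrand formula for matrix-valued PDOs). Since $a \in S^0(\A)$ with principal symbol $a_0 \in S^0(\<\xi\>^{-2}\Id)$, the square root has principal symbol $q_0 = a_0^{1/2} \in S^0(\<\xi\>^{-1}\Id) \subset S^0(\D\A)$, and the lower-order expansion $q = q_0 + S^0(h\D\A)$ as well as the classicality of $q$ are inherited from those of $a$ (hence ultimately of $p$).

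For the principal symbol, I would solve $\zeta_-^T a_0 \zeta_+ = p_0$, where $\zeta_\pm(x,\xi) = \pm i \xi + \nabla\phi(x)$ are the principal symbols of $\Dh, \Dhs$, noting the key relation $\zeta_-^T \zeta_+ = |\xi|^2 + |\nabla\phi|^2$. Away from the critical set $\uuu \times \{0\}$, assumption (iii) together with the fact that $|\nabla\phi|$ is bounded below off $\uuu$ makes the scalar choice $a_0 = (p_0/(|\xi|^2 + |\nabla\phi|^2))\Id$ smooth and of class $S^0(\<\xi\>^{-2})$. Near a critical point $\u$, that naive quotient is generally only continuous; there I would use (iv) to write $p_0 = |\xi|^2 + |\nabla\phi|^2 + r$ with $r = \ooo(|(x-\u,\xi)|^3)$ (even in $\xi$ thanks to (ii)), and then exploit the tensorial freedom: for symmetric matrices $\alpha$ one has $\zeta_-^T \alpha \zeta_+ = \sum_{ij}\alpha_{ij}(\xi_i \xi_j + \partial_i\phi \,\partial_j\phi)$, and a Malgrange-type division against the non-degenerate quadratic form $\sum_i(\xi_i^2 + (HX)_i^2)$ (with $X = x - \u$, $H = \phi''(\u)$) yields a smooth $a_0 = \Id + \ooo(|(x-\u,\xi)|)$, improved to $\Id + \ooo(|(x-\u,\xi)|^2)$ when $r = \ooo(|(x-\u,\xi)|^4)$. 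Local pieces are glued by a partition of unity and symmetrised.

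Higher-order symbols are obtained iteratively from the Moyal expansion of $\zeta_-^T \#_h a \#_h \zeta_+ = p$: at order $h^j$ one must solve $\zeta_-^T a_j \zeta_+ = g_j$, and the same division procedure succeeds provided $g_j$ vanishes appropriately. This is guaranteed by the symbol identity $p(x, i\nabla\phi(x); h) = 0$, a consequence of (i) together with the ambient analyticity $p \in S^0_\infty(1)$ via the Helffer--Sj\"ostrand complex-shift formalism, which makes the obstructions to division cancel order by order. Borel summation then produces $A \in \Psi^0(\A)$ with $R := P_h - \Dhs A \Dh$ an $\ooo(h^\infty)$ self-adjoint smoothing operator satisfying $R e^{-\phi/h} = 0$. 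To convert this into an exact identity, I would use that $\ker \Dh = \operatorname{span}(e^{-\phi/h})$ and that the Witten Laplacian $\Dhs \Dh$ admits a partial inverse $G_0$ on the orthogonal of its kernel, so the projector $\Pi_0$ onto $(\ker \Dh)^\perp$ factors as $\Pi_0 = \Dhs \Dh G_0 = G_0 \Dhs \Dh$; then $R = \Pi_0 R \Pi_0 = \Dhs (\Dh G_0 R G_0 \Dhs) \Dh$, and absorbing the $\ooo(h^\infty)$ correction $\Dh G_0 R G_0 \Dhs$ into $A$ preserves both its symbol class and its positivity, yielding the exact factorization.

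The hard part is the smooth construction of $a_0$ near critical points: the naive scalar ansatz fails because $r/(|\xi|^2 + |\nabla\phi|^2)$ is generally only continuous despite $r$ vanishing to order $3$, so one must genuinely use the matrix degrees of freedom and the Morse structure of $\phi$ to perform a smooth division. A secondary technical subtlety is that $G_0$ is not a standard PDO near critical points (the principal symbol of $\Dhs \Dh$ is degenerate there), so one must verify that the $\ooo(h^\infty)$ correction can be absorbed without leaving the pseudodifferential class $\Psi^0(\A)$; this is where the analytic symbol class $S^0_\infty$ and the exponential weight machinery come in.
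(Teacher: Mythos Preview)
Your overall two-stage plan (build $A$ with $P_h=\Dhs A\Dh$, then take $Q=A^{1/2}$) is reasonable, but the route you propose to an \emph{exact} factorization has a real gap.

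\medskip

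\textbf{The correction step does not close.} After Borel-summing you obtain $R:=P_h-\Dhs A\Dh\in\Psi^{-\infty}$, and you want to absorb it via $R=\Dhs(\Dh G_0RG_0\Dhs)\Dh$, where $G_0$ is the partial inverse of the Witten Laplacian $\Dhs\Dh$ on $(\ker\Dh)^\perp$. The problem is that under Hypotheses~\ref{h1}--\ref{h2} the function $\phi$ has \emph{several} minima, so $\Dhs\Dh$ has $n_0-1$ non-zero eigenvalues of size $e^{-c/h}$; hence $\Vert G_0\Vert\gtrsim e^{c/h}$, and $\Vert\Dh G_0\Vert=\Vert G_0^{1/2}\Pi_0\Vert\gtrsim e^{c/(2h)}$. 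A merely $\ooo(h^\infty)$ remainder $R$ is not dominated by these exponential losses, so the correction $\Dh G_0RG_0\Dhs$ need be neither small nor a pseudodifferential operator in $\Psi^0(\A)$. Your remark that ``the analytic symbol class $S^0_\infty$ and the exponential weight machinery come in'' here is precisely the point, but no amount of machinery turns a Borel remainder into something exponentially small; you would need an analytic asymptotic construction with error $\ooo(e^{-C/h})$ for arbitrarily large $C$, which is a different (and much harder) argument than the one you sketch.

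\medskip

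\textbf{How the paper avoids this.} The paper never has an $\ooo(h^\infty)$ error to correct: it obtains the factorization $P_h=\Dhs\widehat Q\Dh$ \emph{exactly} in one shot, by conjugating $P_h$ by $e^{\phi/h}$ (Kuranishi trick, which is legitimate since $p\in S^0_\infty(1)$), factoring the resulting left symbol by $\xi$ on the right using $P_h(e^{-\phi/h})=0$, and then---crucially---using the parity assumption (ii) through an explicit change of variables $(s,z)\mapsto(1-s,-z)$ in an oscillatory integral to show the remaining symbol again vanishes at $\xi=0$, so one can factor $\Dhs$ on the left. This is where parity really enters; your proposal only invokes it to say $r$ is even in $\xi$, which is not how it is used. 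Once $\widehat Q$ is in hand exactly, positivity is obtained not by solving a positive division problem but by exploiting the \emph{gauge freedom} $[\,d_{\phi,h}^j,d_{\phi,h}^k\,]=0$: one adds explicit operators $\B^0,\B^\infty$ to $\widehat Q$ that leave $\Dhs\widehat Q\Dh$ unchanged, arranging first the behaviour near critical points and then diagonal dominance away from them, until the modified $\breve Q$ has principal symbol $\geq c\,\Xi^{-2}$ and a genuine pseudodifferential square root exists.

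\medskip

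A secondary issue: your identity ``$p(x,i\nabla\phi(x);h)=0$'' is not literally correct beyond principal level; what vanishes at $\xi=0$ is the full left symbol of $e^{\phi/h}P_he^{-\phi/h}$, which differs from $p(x,\xi+i\nabla\phi(x);h)$ by the usual Kuranishi corrections. This matters if you try to run your order-by-order division.
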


Let us now make some comments on the above theorem. 
As  already mentioned, we decided in this paper not to give results in the most general
case so that technical aspects do not hide the main ideas. Nevertheless, we would like to mention here some possible generalizations of the preceding result.

First, it should certainly be possible to use  more general order functions and to prove a factorization results for symbols in other classes (for instance $S^0(\<(x,\xi)\>^2)$.
 This should allow to see the supersymmetric structure of the Witten Laplacian as a special case of our result. In other words, the symbol  $p(x,\xi;h)=\vert \xi\vert^2+\vert\nabla\phi(x)\vert^2-h\Delta\phi(x)$ would satisfy Assumptions i) to iv) above.

The analyticity of the symbol $p$ with respect to variable $\xi$ is certainly not necessary in order to get a factorization result
(it suffices to take a non analytic $q$ in the conclusion to see it). Nevertheless, since our approach
consists in conjugating the operator by $e^{-\phi/h}$ it seems difficult to deal with non analytic symbols.
Moreover, using a regularization procedure in the proof the above theorem,  it is certainly possible to prove that the symbol
$q$ above can be chosen in a class $S^0_\tau(\D\aaa)$ for some $\tau>0$.
Using this additional property it may be possible to prove some Agmon estimates, construct more accurate quasimodes (on the $1$-forms) and then to prove a full asymptotic expansion in Theorem \ref{e3}.

A  more delicate question should be to get rid of the parity assumption ii). 
It is clear that this assumption is not necessary (take $q(x,\xi)=\<\xi\>^{-2}(\Id+\diag(\xi_i/\<\xi\>))$ in the conclusion) 
but it seems difficult to prove a factorization result without it. For instance, if we consider the case 
$\phi=0$ (which doesn't enter exactly in our framework but enlightens easily the situation) then $P_h=hD_{x_j}$ is of order $1$ and 
can not be smoothly factorized both left and right.

As it will be seen in the proof below, the operator $Q$ (as well as $Q^*Q$) above is not unique. Trying to characterize the set of all possible $Q$ should be also a question of 
interrest.

Eventually, optimality of assumption iv) should be questioned. Expanding $q_0$ near $(\u,0)$ we can see that we have necessarily
$$p_0(x,\xi)=\vert q_0(\u,0)(\xi-i\nabla\phi)\vert^2+\ooo(\vert(x-\u,\xi)\vert^3)$$
near any critical point.  In assumption iv) we consider the case $q_0(\u,0)=\Id$, but it could be easily relaxed to any invertible matrix $q_0(\u,0)$.

\medskip

The plan of the article is the following.  In the next section we analyse the structure of operator $\T_h^\star$
and prove the first results on the spectrum stated in Theorem \ref{e1}. In section 3 we prove Theorem \ref{e4} and apply it to the case of the random walk operator.
In section 4, we prove some preliminary spectral results, and in section 5, we prove Theorem \ref{e3}.

\section{Structure of the operator and first spectral results} \label{s2}

In this section, we analyse the structure of the spectrum of the operator $\T_h^\star$ on the space $\hhh_{h} = L^{2} ( d \nu_{h,\infty} )$ (see \eqref{a10}). But this is more convenient to work with the standard Lebesgue measure, than with the measure $d \nu_{h,\infty}$. We then introduce the Maxwellian $\mmm_h$ defined by
\begin{equation} \label{a52}
d \nu_{h , \infty} = \mmm_h(x) \, d x \qquad \text{so that} \qquad \mmm_h = \widetilde{Z}_h \mu_h ( B_{h} (x) ) Z_h e^{- \phi ( x ) / h} ,
\end{equation}
and we make the following change of function
\begin{equation*}
\U_h u(x) : = \mmm_h^{-1/2}(x) u (x) ,
\end{equation*}
where $\U_{h}$ is unitary from $L^2 ( \R^d ) = L^2 ( \R^d , d x )$ to $\hhh_h$. Denoting
\begin{equation} \label{a3}
T_h : = \U_h^* \T_h^\star \U_h ,
\end{equation}
the conjugated operator acting in $L^2(\R^d)$, we have
\begin{align*}
 T_h u(x) &  = Z_h  \mmm_h^{-1/2}(x) e^{-\phi(x)/h} \int_{\R^{d}}  \one_{\vert x-y\vert<h} \mmm_h^{1/2}(y)  \mu_h(B(y,h))^{-1} u(y) \, d y  \\
& = \bigg( \frac{ Z_h e^{-\phi(x)/h}}{\mu_h(B(x,h))} \bigg)^{1/2} \int_{\vert x-y\vert<h}u(y) \bigg( \frac{Z_h e^{-\phi(y)/h}}{\mu_h(B(y,h))} \bigg)^{1/2} dy.
\end{align*}
We pose for the following
\begin{equation*}
a_h ( x ) = ( \alpha_d h^d )^{1/2} \bigg( \frac{ Z_h e^{-\phi(x)/h}}{\mu_h(B(x,h))}      \bigg)^{1/2} ,
\end{equation*}
and define the operator $\G$  by
\begin{equation} \label{a4}
 \G u(x)=\frac 1{\alpha_d h^d}\int_{\vert x-y\vert<h}u(y)dy
\end{equation}
where $\alpha_d = \vol (B(0,1))$ denotes the euclidean volume of the unit ball, so that with these notations,  operator $T_h$
reads
\begin{equation} \label{a2}
T_h = a_h \G a_h ,
\end{equation}
i.e.
\begin{equation*}
T_h u ( x ) = a_h ( x ) \G( a_h u ) (x) .
\end{equation*}
We note that
\begin{equation}\label{eq:form_ah}
a_h^{-2}(x) =   \frac{ \mu_h(B(x,h)) e^{\phi(x)/h}}{\alpha_d h^dZ_h} =
 \frac{1}{\alpha_d h^d} \int_{|x-y| <h}  e^{(\phi(x)-\phi(y))/h} dy = e^{\phi(x)/h} \G (e^{-\phi/h})(x) .
\end{equation}
We now collect some properties on $\G$ and $a_{h}$.

One very simple but fundamental observation is that $\G$ is a semiclassical Fourier multiplier $\G = G ( h D )=\Op(G)$ where
\begin{equation} \label{a8}
\forall \xi \in \R^{d} , \qquad G ( \xi ) = \frac{1}{\alpha_{d}} \int_{\vert z \vert < 1} e^{i z \cdot \xi} d z .
\end{equation}

\begin{lemma}\sl \label{a7}
The function $G$ is analytic on $\C^d$ and enjoys the following properties:
\begin{enumerate}[i)]
\item $G : \R^{d} \longrightarrow \R$.
\item There exists $\delta > 0$ such that $G ( \R^{d} ) \subset [ - 1 + \delta , 1 ]$. Near $\xi=0$, we have
\begin{equation*}
G ( \xi ) = 1 - \beta_{d} |\xi|^{2} + \ooo ( |\xi|^{4} ) ,
\end{equation*}
where $\beta_{d} = ( 2 d + 4)^{- 1}$. For any $r>0$, $\sup_{\vert \xi \vert \geq r} \vert G ( \xi )\vert < 1$ and $\lim_{\vert \xi \vert \rightarrow \infty} G ( \xi ) = 0$.
\item For all $\tau \in \R^{d}$, $G ( i \tau ) \in \R$, $G ( i \tau )\geq 1$ and, for any $r>0$, $\inf_{\vert \tau \vert \geq r} G ( i \tau ) > 1$.   \label{a12}
\item For all $\xi , \tau \in \R^{d}$ we have $\vert G ( \xi + i \tau ) \vert \leq G( i \tau )$.
\end{enumerate}
\end{lemma}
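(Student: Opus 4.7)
The plan is to treat the four items in order, all following from elementary manipulations of the explicit integral representation \eqref{a8} together with the central symmetry $z \mapsto -z$ of the unit ball.

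First, analyticity of $G$ on all of $\C^d$ is immediate from \eqref{a8}: the integrand $e^{iz\cdot\xi}$ is entire in $\xi$ and the domain of integration is compact, so differentiation under the integral sign applies. For item (i), I would write $e^{iz\cdot\xi} = \cos(z\cdot\xi) + i \sin(z\cdot\xi)$ and observe that the sine part integrates to zero by symmetry $z \mapsto -z$, leaving $G(\xi) = \alpha_d^{-1}\int_{|z|<1}\cos(z\cdot\xi)\,dz \in \R$.

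For item (ii), expanding $e^{iz\cdot\xi}$ as a power series and integrating term-by-term, the linear term in $\xi$ vanishes by symmetry. The quadratic term is $-\tfrac{1}{2}\alpha_d^{-1}\int_{|z|<1}(z\cdot\xi)^2\,dz$, which reduces by symmetry to $-\tfrac{1}{2}\alpha_d^{-1}|\xi|^2\int_{|z|<1} z_1^2\,dz$; the last integral equals $\alpha_d/(d+2)$ (polar coordinates), yielding $\beta_d = 1/(2d+4)$. For the uniform bound $|G(\xi)|\leq 1$ (with $G(0)=1$), I use the trivial triangle inequality. To get the strict bound $|G(\xi)|<1$ for $\xi\neq 0$, I argue by the equality case of the triangle inequality: $|G(\xi)|=1$ would force $e^{iz\cdot\xi}$ to be essentially constant in $z$ on $B(0,1)$, which is impossible unless $\xi=0$. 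Combining this with Riemann--Lebesgue (the Fourier transform of $\one_{B(0,1)}\in L^1$ goes to $0$ at infinity) and a compactness argument yields both $\sup_{|\xi|\geq r}|G(\xi)|<1$ and, by the same compactness trick applied to $-G$, the existence of $\delta>0$ with $G(\R^d)\subset[-1+\delta,1]$.

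For item (iii), the same central symmetry turns $e^{-z\cdot\tau}$ into $\cosh(z\cdot\tau)$, so
\begin{equation*}
G(i\tau) = \frac{1}{\alpha_d}\int_{|z|<1}\cosh(z\cdot\tau)\,dz \in \R,
\end{equation*}
and since $\cosh \geq 1$ with equality only at $0$ one gets $G(i\tau)\geq 1$, strictly for $\tau\neq 0$. Exponential growth of $\cosh$ shows $G(i\tau)\to\infty$ as $|\tau|\to\infty$ (integrate over the half-ball $\{z\cdot\tau\geq|\tau|/2\}$, say), so the infimum of $G(i\cdot)$ on $\{|\tau|\geq r\}$ is attained on some compact annulus, where it is strictly greater than $1$. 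Finally, item (iv) is the standard domination:
\begin{equation*}
|G(\xi+i\tau)| \leq \frac{1}{\alpha_d}\int_{|z|<1}|e^{iz\cdot\xi}|\,e^{-z\cdot\tau}\,dz = \frac{1}{\alpha_d}\int_{|z|<1}e^{-z\cdot\tau}\,dz = G(i\tau).
\end{equation*}

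There is no real obstacle here; the only point requiring a little care is the existence of the spectral gap $\delta>0$ in (ii), and that is handled cleanly by combining the strict inequality $|G|<1$ off the origin with the Riemann--Lebesgue decay to exclude sequences along which $G$ could approach $-1$.
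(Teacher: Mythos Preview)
Your proof is correct and follows essentially the same route as the paper's: the symmetry $z\mapsto -z$ for items (i) and (iii), and the trivial modulus domination for (iv). The only substantive difference is that the paper simply cites \cite{LeMi10_01} for all of item (ii), whereas you supply a self-contained argument (Taylor expansion for $\beta_d$, equality case of the triangle inequality for strictness, Riemann--Lebesgue plus compactness for the gap $\delta$); for (iii) the paper instead observes that $\tau\mapsto G(i\tau)$ is radial and reduces to the monotonicity of a one-variable function $\Gamma$, while your $\cosh$-plus-compactness argument achieves the same conclusion with comparable effort.
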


\begin{proof}
The function $G$ is analytic on $\C^{d}$ since it is the Fourier transform of a compactly supported distribution. The fact that $G ( \R^{d} )\subset \R$ is clear using the change of variable $z \mapsto - z$. The second item was shown in \cite{LeMi10_01}.

We now prove \textsl{iii)}. The fact that $G ( i \tau )$ is real for any $\tau \in \R^{d}$ is clear. Moreover, one can see easily that $\tau \mapsto G ( i \tau )$ is radial, so that there exists a function $\Gamma : \R \rightarrow \R$ such that, for all $\tau \in \R^{d}$, $G ( i \tau ) = \Gamma ( \vert \tau \vert )$. Simple computations show that $\Gamma$ enjoys the following properties:
\begin{itemize}
\item $\Gamma$ is even,
\item $\Gamma$ is strictly increasing on $\R_+$,
\item $\Gamma(0)=1$.
\end{itemize}
This leads directly to the announced properties on $G ( i \tau )$.

Finally, the fact that  for all $\xi , \tau \in \R^{d}$ we have $\vert G ( \xi + i \tau ) \vert \leq G ( i \tau )$ is trivial since, for all $z \in \R^{d}$, $\vert e^{i z \cdot ( \xi + i \tau ) }\vert = e^{- z \cdot \tau}$.
\end{proof}

\begin{lemma}\sl \label{a5}
There exist $c_1 , c_2 > 0$ such that $c_1 < a_h (x) < c_2$ for all $x \in \R^d$ and $h \in ] 0 , 1 ]$. Moreover, the functions $a_h$ and $a_h^{-2}$ belong to $S^{0} (1)$ and have classical expansions $a_{h} = a_{0} + h a_{1} + \cdots$ and $a_{h}^{-2} = a_{0}^{-2} + \cdots$. In addition,
\begin{align*}
a_{0} (x) &= G ( i \nabla \phi ( x ) )^{- 1 / 2} ,  \\
a_{1} (x) &= G ( i \nabla \phi ( x ) )^{- 3 / 2} \frac{1}{4 \alpha_{d}} \int_{\vert z \vert < 1} e^{- \nabla \phi (x) \cdot z} \big\< \phi'' (x) z , z \big\> \, d z .
\end{align*}
Eventually, there exist $c_0 , R > 0$ such that for all $\vert x \vert \geq R$, $a_h^{-2} ( x ) \geq 1 + c_0$ for $h>0$ small enough.
\end{lemma}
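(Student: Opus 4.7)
The plan is to carry out the change of variables $y=x+hz$ in formula \eqref{eq:form_ah} so that the only $h$-dependence sits in the exponent, and then Taylor expand. Writing
\begin{equation*}
a_h^{-2}(x)=\frac{1}{\alpha_d}\int_{|z|<1}e^{\psi_h(x,z)}\,dz,\qquad \psi_h(x,z):=\frac{\phi(x)-\phi(x+hz)}{h}=-\int_0^1\nabla\phi(x+thz)\cdot z\,dt,
\end{equation*}
I first exploit Hypothesis \ref{h1}: all derivatives of $\phi$ of order $\geq 1$ are uniformly bounded on $\R^d$ (smooth on the compact $\{|x|\leq R\}$, controlled at infinity by the hypothesis). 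Consequently $\psi_h$ and each of its $x$-derivatives are bounded uniformly in $x\in\R^d$, $h\in\,]0,1]$ and $|z|<1$, so by the chain rule combined with differentiation under the integral sign, $a_h^{-2}\in S^0(1)$. Since $\psi_h$ is also bounded, $a_h^{-2}$ is bounded above and below by positive constants, so $a_h=(a_h^{-2})^{-1/2}$ is likewise in $S^0(1)$ and satisfies $c_1<a_h<c_2$.

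Next, to identify the expansion, I Taylor expand the exponent,
\begin{equation*}
\psi_h(x,z)=-\nabla\phi(x)\cdot z-\frac{h}{2}\langle\phi''(x)z,z\rangle+h^2\rho_h(x,z),
\end{equation*}
where $\rho_h(x,z)$ is smooth in $(x,z)\in\R^d\times\overline{B(0,1)}$ with all derivatives bounded uniformly in $h$, and admits a full asymptotic expansion in powers of $h$ via the remainder in Taylor's formula. Exponentiating and expanding the $h$-dependent factor gives
\begin{equation*}
e^{\psi_h(x,z)}=e^{-\nabla\phi(x)\cdot z}\Big(1-\frac{h}{2}\langle\phi''(x)z,z\rangle+\ooo(h^2)\Big),
\end{equation*}
where the remainder is again smooth in $(x,z)$ with all $x$-derivatives bounded. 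Integrating over $|z|<1$ and recognizing from \eqref{a8} that $\frac{1}{\alpha_d}\int_{|z|<1}e^{-\nabla\phi(x)\cdot z}\,dz=G(i\nabla\phi(x))$ yields
\begin{equation*}
a_h^{-2}(x)=G(i\nabla\phi(x))-\frac{h}{2\alpha_d}\int_{|z|<1}e^{-\nabla\phi(x)\cdot z}\langle\phi''(x)z,z\rangle\,dz+\ooo(h^2),
\end{equation*}
together with a full classical expansion. This reads $a_h^{-2}=a_0^{-2}+hb_1+\ooo(h^2)$ with $a_0^{-2}=G(i\nabla\phi)$, so $a_0(x)=G(i\nabla\phi(x))^{-1/2}$. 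Taking the $(-1/2)$-power produces $a_h=a_0+ha_1+\ooo(h^2)$ with $a_1=-\frac{1}{2}a_0^3b_1$, which matches the announced formula for $a_1$.

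Finally, for the lower bound at infinity, Hypothesis \ref{h1} yields $|\nabla\phi(x)|\geq c>0$ for $|x|\geq R$, and Lemma \ref{a7} \textsl{iii)} then provides $G(i\nabla\phi(x))\geq \inf_{|\tau|\geq c}G(i\tau)=:1+2c_0>1$ on this region. Since $a_h^{-2}=G(i\nabla\phi)+\ooo(h)$ uniformly in $x$, we obtain $a_h^{-2}(x)\geq 1+c_0$ for $|x|\geq R$ and $h>0$ small enough. The only delicate point in this whole argument is ensuring that the symbol estimates hold uniformly in $x\in\R^d$; this is precisely where Hypothesis \ref{h1} enters, providing the global control on derivatives of $\phi$ that allows $\psi_h$ to be treated as an $S^0(1)$ symbol in $x$ with harmless compact $z$-dependence.
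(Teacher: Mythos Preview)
Your proof is correct and follows essentially the same approach as the paper: the change of variable $y=x+hz$ in \eqref{eq:form_ah}, the use of the global bounds on $\partial^{\alpha}\phi$ from Hypothesis~\ref{h1} to get $a_h^{-2}\in S^0(1)$, the Taylor expansion of the exponent to extract the classical expansion and the formulas for $a_0,a_1$, and the appeal to Lemma~\ref{a7}~\textsl{iii)} for the lower bound at infinity. If anything, you spell out the computation of $a_1=-\tfrac12 a_0^{3}b_1$ more explicitly than the paper does.
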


\begin{proof}
By a simple change of variable, we have
\begin{equation*}
a_h^{-2} ( x ) =\frac{1}{\alpha_d} \int_{\vert z \vert < 1} e^{( \phi ( x ) - \phi ( x + h z ) ) / h} d z .
\end{equation*}
 Since there exists $C>0$ such that $\vert\nabla\phi(x)\vert\leq C$ for all $x\in\R^d$, then we can find some constants $c_1,c_2>0$ such that $c_1 < a_h (x)^{-2} < c_2$ for all $x \in \R^d$ and $h \in ]0,1]$.  Moreover, thanks to the bounds on the derivatives of $\phi$, we get easily that derivatives of $a_h^{-2}$ are also bounded. This shows that $a_h^{-2}$ belongs to $S^0 (1)$ and, since it is bounded from below by $c_1>0$, we get immediately that $a_h \in S^{0} (1)$.

On the other hand, by simple Taylor expansion, we get that $a_{h}$ and $a_{h}^{-2}$ have classical expansions and the required expressions for $a_{0}$ and $a_{1}$. Since $\vert \nabla \phi ( x ) \vert \geq c > 0$ for $x$ large enough, it follows from Proposition \ref{a7} \textsl{iii)} that there exists $c_0 , R>0$ such that for all $\vert x \vert \geq R$, $G ( i \nabla \phi ( x ) ) \geq 1 + 2 c_0$, and hence $a_h^{-2} ( x ) \geq 1 + c_0$, for $h > 0$ sufficiently  small.
\end{proof}

Since we want to study the spectrum near $1$, it will be convenient to introduce
\begin{equation} \label{a49}
P_h : = 1 - T_{h} .
\end{equation}
Using \eqref{a2} and \eqref{eq:form_ah}, we get
\be\label{eq:form_Ph}
P_h=a_h(V_h(x)-G(hD_x))a_h
\ee
with $V_h(x)=a_h^{-2}(x)=e^{\phi/h}G(hD_x)(e^{-\phi/h})$.
As a consequence of the previous lemmas, we get the following proposition for $P_h$.

\begin{proposition}\sl \label{a11}
The operator $P_{h}$ is a semiclassical pseudodifferential operator whose symbol $p (x , \xi ;h ) \in S^{0}_{\infty} (1)$ admits a classical expansion which reads $p = p_{0} + h p_{1} + \cdots$ with
\begin{align*}
p_{0} ( x , \xi ) = 1 - G ( i \nabla \phi (x) )^{- 1} G ( \xi ) \geq 0  \qquad \text{and} \qquad p_{1} ( x , \xi ) = G_{1} ( x ) G ( \xi ) ,
\end{align*}
where
\begin{align*}
G_{1} (x) &= - G ( i \nabla \phi (x) )^{- 2} \frac{1}{2 \alpha_{d}} \int_{\vert z \vert < 1} e^{- \nabla \phi (x) \cdot z} \big\< \phi'' (x) z , z \big\> \, d z \\
& = - \beta_{d} \Delta \phi ( \u ) + \ooo ( \vert x - \u \vert ) ,
\end{align*}
near any $\u \in \uuu$.
\end{proposition}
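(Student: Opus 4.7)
The plan is to compute the Weyl symbol of $P_h$ starting from formula \eqref{eq:form_Ph}, namely $P_h = a_h(V_h - G(hD_x))a_h$ with $V_h = a_h^{-2}$, and then to expand it in powers of $h$. First, I would check that $G(hD_x) = \Op(G)$ lies in $\Psi^0_\infty(1)$: by Lemma \ref{a7}, $G$ is entire on $\C^d$ and bounded on each strip $|\Im\xi| < \tau$ by $G(i\tau)$ (part iv), and the derivatives $\partial_\xi^\alpha G$ inherit analogous bounds. On the other hand, by Lemma \ref{a5}, $a_h$ and $V_h$ belong to $S^0(1)$ and depend only on $x$. Since $a_h V_h a_h = 1$ (as a pointwise product of functions of $x$, hence as a composition of multiplication operators), the Weyl composition yields
\begin{equation*}
p(x,\xi;h) = 1 - (a_h \# G \# a_h)(x,\xi) \in S^0_\infty(1).
\end{equation*}

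Next, I would expand $a_h \# G \# a_h$ using the Moyal formula. Since $a_h$ is $\xi$-independent and $G$ is $x$-independent, the Poisson bracket $\{a_h, G\} = -\partial_x a_h \cdot \partial_\xi G$, so the $\mathcal{O}(h)$ correction from $a_h \# G$ equals $-\frac{h}{2i}\partial_x a_h \cdot \partial_\xi G$. Composing the result with $a_h$ on the right produces a further $\mathcal{O}(h)$ correction $+\frac{h}{2i}\, a_h\,\partial_\xi G \cdot \partial_x a_h$, and the two contributions cancel exactly, giving $a_h \# G \# a_h = a_h^2(x) G(\xi) + \mathcal{O}_{S^0_\infty}(h^2)$. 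Combined with the classical expansion $a_h = a_0 + h a_1 + \cdots$ of Lemma \ref{a5}, this yields a classical expansion $p = p_0 + h p_1 + \cdots$ with
\begin{equation*}
p_0(x,\xi) = 1 - a_0^2(x) G(\xi) = 1 - G(i\nabla\phi(x))^{-1} G(\xi), \qquad p_1(x,\xi) = -2 a_0(x) a_1(x)\, G(\xi).
\end{equation*}
Inserting $a_0 = G(i\nabla\phi)^{-1/2}$ and the formula for $a_1$ from Lemma \ref{a5} gives precisely $G_1 = -2 a_0 a_1$, as stated. The nonnegativity $p_0 \geq 0$ is an immediate consequence of parts iii) and iv) of Lemma \ref{a7}, which together give $|G(\xi)| \leq G(0) = 1 \leq G(i\nabla\phi(x))$.

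Finally, at a critical point $\u \in \uuu$ one has $\nabla\phi(\u) = 0$, so $G(i\nabla\phi(\u)) = G(0) = 1$ and the exponential weight in the integral defining $G_1$ reduces to $1$. Rotational symmetry yields $\int_{|z|<1} z_i z_j \, dz = \frac{\alpha_d}{d+2}\delta_{ij}$, hence $\int_{|z|<1}\langle\phi''(\u) z, z\rangle\, dz = \frac{\alpha_d}{d+2}\Delta\phi(\u)$, and therefore $G_1(\u) = -\frac{\Delta\phi(\u)}{2(d+2)} = -\beta_d \Delta\phi(\u)$; the remainder $\mathcal{O}(|x - \u|)$ is immediate from smoothness of $G_1$. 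The only delicate point is to justify the Moyal calculus in the non-standard class $S^0_\infty(1)$, but this is harmless here: since $a_h$ and $V_h$ have no $\xi$-dependence, the expansion collapses to a Taylor series in $h$ involving only $\partial_\xi^k G$, controlled uniformly on strips by Lemma \ref{a7}.
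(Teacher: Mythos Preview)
Your proof is correct and follows essentially the same route as the paper: both compute the Weyl symbol of $P_h = a_h(V_h - G(hD))a_h$ via the Moyal product, observe that the first-order Poisson-bracket contributions from the two compositions with $a_h$ cancel (the paper writes this as $\frac{h}{2i}a_0\{G,a_0\} + \frac{h}{2i}\{a_0,a_0 G\} = 0$), and then read off $p_0 = 1 - a_0^2 G$ and $p_1 = -2a_0 a_1 G = G_1 G$ from Lemma~\ref{a5}. Your argument is in fact slightly more complete than the paper's, since you spell out the moment computation $\int_{|z|<1} z_i z_j\,dz = \frac{\alpha_d}{d+2}\delta_{ij}$ that yields $G_1(\u) = -\beta_d\Delta\phi(\u)$, which the paper merely states.
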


\begin{proof}
The fact that $p$ belongs to $S^{0}_{\infty} (1)$ and admits a classical expansion is clear thanks to Lemma \ref{a7} and Lemma \ref{a5}. From the standard pseudodifferential calculus, the symbol $p$ satisfies
\begin{align*}
p ( x , \xi ; h ) &= 1 - a_{0}^{2} G - 2 a_{0} a_{1} G h - \frac{h}{2 i} a_{0} \{ G , a_{0} \} - \frac{h}{2 i} \{ a_{0} , a_{0} G \} + S^{0} ( h^{2} )  \\
&=  1 - a_{0}^{2} G - 2 a_{0} a_{1} G h + S^{0} ( h^{2} ) .
\end{align*}
Combined with Lemma \ref{a5}, this leads to the required expressions of $p_{0}$ and $p_{1}$.

Finally, the non-negativity of $p_{0}$ comes from the formula
\begin{equation*}
p_{0} = G ( i \nabla \phi (x) )^{- 1} \big( ( 1 - G ( \xi ) ) + ( G ( i \nabla \phi (x) ) - 1 ) \big) ,
\end{equation*}
and Lemma \ref{a7} which implies $1 - G ( \xi ) \geq 0$ and $G ( i \nabla \phi (x) ) - 1 \geq 0$.
\end{proof}

We finish this subsection with the following proposition which is a part of Theorem \ref{a1}.

\begin{proposition}\sl \label{a47}
There exist $\delta , h_{0} > 0$ such that the following assertions hold true for $h \in ] 0 , h_{0} ]$. First, $\sigma (T_h ) \subset [ - 1 + \delta , 1 ]$ and $\sigma_{ess} ( T_h ) \subset [ - 1 + \delta , 1 - \delta ]$. Eventually, $1$ is a simple eigenvalue for the eigenfunction $\mmm_h^{1/2}$.
\end{proposition}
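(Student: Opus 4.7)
The plan is to transfer everything to the standard $L^2 ( \R^d , d x )$ via the unitary equivalence $T_h = \U_h^* \T_h^\star \U_h = a_h G(hD) a_h$, so that $\Vert T_h \Vert = 1$ and $\sigma ( T_h ) \subset [ -1 , 1 ]$ is automatic, and then to combine the Fourier multiplier bounds on $G$ (Lemma \ref{a7}) with the semiclassical control on $a_h$ (Lemma \ref{a5}). The three points to establish are the uniform gap at $-1$, the essential spectrum gap at $+1$, and the existence and simplicity of the eigenvalue $1$.

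For the gap at $-1$, I would argue directly with the quadratic form
\begin{equation*}
\big\langle T_h u , u \big\rangle = \big\langle G(hD) ( a_h u ) , a_h u \big\rangle \geq ( -1 + \delta_0 ) \Vert a_h u \Vert^{2} ,
\end{equation*}
using $G \geq -1 + \delta_0$ from Lemma \ref{a7}\,\textit{ii)}. The crucial semiclassical input is that $a_0 (x) = G ( i \nabla \phi (x) )^{-1/2} \leq 1$ (by Lemma \ref{a7}\,\textit{iii)}), so $a_h \leq 1 + C h$ uniformly in $x$, hence $\Vert a_h u \Vert^2 \leq ( 1 + C h ) \Vert u \Vert^2$ and $T_h \geq ( -1 + \delta_0 / 2 ) I$ for $h$ small.

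For the essential spectrum gap at $+1$, I would localize with $\chi \in C_c^\infty ( \R^d )$ equal to $1$ on $\{ \vert x \vert \leq R \}$, $R$ large, and write
\begin{equation*}
T_h = ( 1 - \chi ) T_h ( 1 - \chi ) + K , \qquad K := \chi T_h \chi + \chi T_h ( 1 - \chi ) + ( 1 - \chi ) T_h \chi .
\end{equation*}
Since $G(hD)$ has the compactly supported kernel $\alpha_d^{-1} h^{-d} \one_{\vert x - y \vert < h}$, the operator $T_h$ has kernel $a_h (x) \, \alpha_d^{-1} h^{-d} \one_{\vert x - y \vert < h} \, a_h (y)$, and each summand of $K$ has a bounded, compactly supported kernel in $(x,y)$, hence is Hilbert--Schmidt, in particular compact. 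On the other hand, for $\vert x \vert \geq R$ we have $a_h^2 \leq ( 1 + c_0 )^{-1}$ by Lemma \ref{a5}, so
\begin{equation*}
\big\langle ( 1 - \chi ) T_h ( 1 - \chi ) u , u \big\rangle \leq \Vert a_h ( 1 - \chi ) u \Vert^{2} \leq \frac{1}{1 + c_0} \Vert u \Vert^{2} .
\end{equation*}
Weyl's theorem on compact perturbations then yields $\sigma_{ess} ( T_h ) = \sigma_{ess} ( ( 1 - \chi ) T_h ( 1 - \chi ) ) \subset ( - \infty , ( 1 + c_0 )^{-1} ]$, which combined with the previous paragraph gives $\sigma_{ess} ( T_h ) \subset [ -1 + \delta , 1 - \delta ]$.

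Finally, the invariance $\T_h^\star ( d \nu_{h , \infty} ) = d \nu_{h , \infty}$ identifies the constant $1 \in \hhh_h$ as an eigenvector of $\T_h^\star$ at $1$; transporting by $\U_h^*$ gives $T_h \mmm_h^{1/2} = \mmm_h^{1/2}$, and $\mmm_h^{1/2} \in L^2 ( \R^d )$ since $d \nu_{h , \infty}$ is a probability measure. For simplicity, any $u \in \ker ( I - T_h )$ corresponds to $f = \mmm_h^{-1/2} u \in \hhh_h$ satisfying $\T_h f = f$; iterating $\T_h^n f = f$ and invoking the classical convergence of $t_h^n ( x , \cdot )$ to $d \nu_{h , \infty}$ (Feller, cited in the introduction) forces $f$ to be constant a.e., so $u \propto \mmm_h^{1/2}$. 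The main obstacle is the essential spectrum bound: the standard Persson argument used for Schrödinger operators does not apply to the zero-order operator $T_h$, and the key trick is to exploit the support condition $\{ \vert x - y \vert < h \}$ of the kernel of $G(hD)$ in order to turn the cutoff remainder $K$ into a genuine Hilbert--Schmidt (hence compact) operator and make Weyl's theorem available.
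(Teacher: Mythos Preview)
Your arguments for the gap at $-1$ and for the essential spectrum are correct and in fact slightly more elementary than the paper's: the paper obtains $T_h\geq -1+\nu/2$ via the G{\aa}rding inequality applied to the symbol $\tau_h(x,\xi)=a_0(x)^2G(\xi)+\ooo(h)$, while you get it by the direct form inequality $\langle G(hD)a_hu,a_hu\rangle\geq(-1+\delta_0)\|a_hu\|^2$ together with $a_h\leq 1+Ch$. For compactness of the remainder $K$, the paper invokes the pseudodifferential fact that $G(hD)\in\Psi^0(1)$ with symbol tending to $0$ at infinity, whereas you exploit the explicit kernel $\alpha_d^{-1}h^{-d}\one_{|x-y|<h}$ to see that $K$ is Hilbert--Schmidt. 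Both routes lead to the same Weyl comparison with $(1-\chi)T_h(1-\chi)$, and your bound on that piece coincides with the paper's.

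The simplicity argument, however, has a genuine gap. The Feller result cited in the introduction gives, for fixed $h$ and fixed $x$, convergence of the measures $t_h^n(x,dy)$ to $d\nu_{h,\infty}$; this yields $\T_h^nf(x)\to\int f\,d\nu_{h,\infty}$ only for test functions $f$ that are at least bounded (and continuous). An eigenfunction $f\in\hhh_h=L^2(d\nu_{h,\infty})$ with $\T_hf=f$ need not be bounded: passing to $u=\mmm_h^{1/2}f$ and using the bounded, compactly supported kernel of $T_h$ you do get $u\in L^\infty(\R^d)$, but then $f=\mmm_h^{-1/2}u$ carries the growing factor $\mmm_h^{-1/2}\sim e^{\phi/(2h)}$ and is in general unbounded. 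So the step ``$\T_h^nf=f$ and Feller convergence force $f$ constant'' is not justified as written. The paper avoids this by a Perron--Frobenius argument: the kernel bound $k_h(x,y)\geq\varepsilon h^{-d}\one_{|x-y|<h}$ makes $\widetilde T_h=T_h+1$ positivity preserving and ergodic (any two nonnegative functions are coupled by some power $\widetilde T_h^{\,n}$, thanks to the support condition $|x-y|<nh$), and Theorem~XIII.43 of Reed--Simon then gives simplicity of the top eigenvalue directly in $L^2$. If you want to salvage your route, you would first have to upgrade Feller's pointwise statement to an $L^2(d\nu_{h,\infty})$ convergence, or else prove independently that any $L^2$ eigenfunction at $1$ is automatically bounded; neither is immediate.
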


\begin{proof}

We start by proving $\sigma ( T_{h} ) \subset [ - 1 + \delta , 1 ]$. From \eqref{a9}, we already know that $\sigma ( T_{h} ) \subset [ - 1 , 1 ]$. Moreover, Lemma \ref{a7} \textsl{ii)} and \textsl{iii)} imply $0 \leq a_{0} (x) \leq 1$ and $G ( \R^{d} ) \subset [ - 1 + \nu , 1 ]$ for some $\nu > 0$. Thus, we deduce that the symbol $\tau_{h} ( x , \xi )$ of the
pseudodifferential operator $T_{h} \in \Psi^{0} (1)$ satisfies
\begin{equation*}
\tau_{h} ( x , \xi ) \geq - 1 + \nu + \ooo (h) .
\end{equation*}
Then, G{\aa}rding's inequality yields
\begin{equation*}
T_{h} \geq - 1 + \nu /2 ,
\end{equation*}
for $h$ small enough. Summing up, we obtain $\sigma ( T_{h} ) \subset [ - 1 + \delta , 1 ]$.

Let us prove the assertion about the essential spectrum. Let $\chi \in C_{0}^{\infty} ( \R^{d} ; [0 , 1 ] )$ be equal to $1$ on $B ( 0 ,R )$, where $R > 0$ is as in Lemma \ref{a5}. Since $\G = G ( h D ) \in \Psi^{0} (1)$ and $\lim_{\vert \xi \vert \rightarrow \infty} G ( \xi ) = 0$, the operator
\begin{equation*}
T_{h} - (1 - \chi ) T_h (1 - \chi ) = \chi T_{h} + T_{h} \chi - \chi T_{h} \chi ,
\end{equation*}
is compact. Hence, $\sigma_{ess} ( T_{h} ) = \sigma_{ess} ( (1 - \chi ) T_h (1 - \chi ) )$. Now, for all $u \in L^2 ( \R^{d} )$, we have
\begin{align*}
\big\< ( 1 - \chi ) T_{h} ( 1 - \chi ) u , u \big\> &= \big\< \G a_h ( 1 - \chi ) u, a_h ( 1 - \chi ) u \big\> \\ &\leq \Vert a_h ( 1 - \chi ) u \Vert^{2} \leq (1 + c_{0} )^{- 1} \Vert u \Vert^{2} ,
\end{align*}
since $\Vert \G \Vert_{L^2 \rightarrow L^2} \leq 1$ and $\vert a_h ( 1 - \chi ) \vert \leq (1 + c_{0} )^{- 1 / 2}$ thanks to Lemma \ref{a7} \textsl{ii)} and Lemma \ref{a5}. As a consequence, there exists $\delta >0$ such that $\sigma_{ess} ( T_h ) \subset [ - 1 + \delta , 1 - \delta ]$.

To finish the proof, it remains to show that $1$ is a simple eigenvalue. Let $k_{h} ( x , y )$ denotes the distribution kernel of $T_{h}$. From \eqref{a4}, \eqref{a2} and Lemma \ref{a5}, there exists $\varepsilon > 0$ such that, for all $x , y \in \R^{d}$,
\begin{equation} \label{a6}
k_{h} (x , y ) \geq \varepsilon h^{-d} \one_{\vert x - y \vert < h} .
\end{equation}
We now consider $\widetilde{T}_{h} = T_{h} + 1$. Since $\Vert T_{h} \Vert =1$, the operator $\widetilde{T}_{h}$ is bounded and non-negative. Moreover, $\mmm_h^{1/2}$ is clearly an eigenvector associated to the eigenvalue $\Vert \widetilde{T}_{h} \Vert = 2$. On the other hand, \eqref{a6} implies that $\widetilde{T}_{h}$ is positivity preserving (this means that $u (x) \geq 0$ almost everywhere and $u \neq 0$ implies $\widetilde{T}_{h} u (x) \geq 0$ almost everywhere and $\widetilde{T}_{h} u \neq 0$). Furthermore, $\widetilde{T}_{h}$ is ergodic (in the sense that, for any $u , v \in L^{2} ( \R^{d} )$ non-negative almost everywhere and not the zero function, there exists $n \geq 1$ such that $\< u , \widetilde{T}^{n}_{h} v \> > 0$). Indeed, let $u,v$ be two such functions. We have $\< u , \widetilde{T}^{n}_{h} v \> \geq \< u , T^{n}_{h} v \>$ where, from \eqref{a6}, the distribution kernel of $T_{h}^{n}$ satisfies
\begin{equation*}
k_{h}^{(n)} (x , y ) \geq \varepsilon_{n} h^{-d} \one_{\vert x - y \vert < n h} ,
\end{equation*}
with $\varepsilon_{n} > 0$. Thus, if $n \geq 1$ is chosen such that $\dist ( \esssupp ( u ) , \esssupp ( v ) ) < n h$, we have $\< u , \widetilde{T}^{n}_{h} v \> > 0$.
Eventually, the above properties of $\widetilde{T}_{h}$ and the Perron--Frobenius theorem (see Theorem XIII.43 of
\cite{ReSi78_01}) implies that $1$ is a simple eigenvalue of $T_{h}$.
\end{proof}

\section{Supersymmetric structure}
In this section, we prove that the operator $\Id-\T^\star_h$ admits a  supersymmetric structure and prove Theorem \ref{e4}. We showed in the preceding section that
$$
\Id- \T^\star_h = \U P_h \U^*
$$
and
before proving Theorem \ref{e4}, we state and prove as a corollary the main result
on the operator $P_h$. Recall here that $\beta_{d} = ( 2 d + 4)^{- 1}$ and $\D\A$ is the matrix
symbol defined by $\D\A_{i,j} = \< \xi_j \>^{-1}$, for all $i , j = 1, \ldots , d$.

\begin{corollary}\sl \label{e8}
There exists a classical symbol $q\in S^0_{cl}( \D\A)$ such that the following holds true.
 First $P_h=L_\phi^*L_\phi$ with $L_\phi=Qd_{\phi,h} a_h $ and
$Q=\Op(q)$.
Next, 
$q=q_0+\Psi^0(h\Xi\A)$  with $q_0(x,\xi)=\beta_d^{1/2}\Id+\ooo(|(x-\u ,\xi)|^2)$ for any critical point $\u \in \uuu$.
\end{corollary}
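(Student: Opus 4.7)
The strategy is to reduce the statement to Theorem \ref{e4} applied to a suitably normalized conjugate of $P_{h}$. Starting from \eqref{eq:form_Ph}, write $P_h = a_h \widetilde P_h a_h$ with
\begin{equation*}
\widetilde P_h := V_h(x) - G(hD_x) ,
\end{equation*}
whose Weyl symbol is $V_h(x) - G(\xi)$. By Lemma \ref{a5}, $V_h = a_h^{-2} = G(i \nabla \phi (x)) + S^0(h)$, and by the analyticity of $G$ together with the Taylor expansions $G(\xi) = 1 - \beta_d |\xi|^2 + \ooo(|\xi|^4)$ and (since $G(i\cdot)$ is even) $G(i\tau) = 1 + \beta_d |\tau|^2 + \ooo(|\tau|^4)$, the principal symbol of $\widetilde P_h$ satisfies
\begin{equation*}
\widetilde p_0(x,\xi) = G(i\nabla\phi(x)) - G(\xi) = \beta_d \big( |\nabla \phi(x)|^2 + |\xi|^2 \big) + \ooo \big( |(x-\u,\xi)|^4 \big)
\end{equation*}
near each critical point $\u \in \uuu$. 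Consequently, $\beta_d^{-1} \widetilde P_h$ has a principal symbol of the form $|\xi|^2 + |\nabla\phi|^2 + r$ with $r = \ooo(|(x-\u,\xi)|^4)$, exactly the stronger version of hypothesis iv) of Theorem \ref{e4}.

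The plan is then to verify the remaining hypotheses of Theorem \ref{e4} for $\beta_d^{-1}\widetilde P_h$. Hypothesis i) is immediate from \eqref{eq:form_ah}, since $V_h e^{-\phi/h} = G(hD_x) e^{-\phi/h}$ gives $\widetilde P_h (e^{-\phi/h}) = 0$. Hypothesis ii) follows because $G(\xi)$ is an even function of $\xi$ (change of variables $z\mapsto -z$ in \eqref{a8}). For hypothesis iii), one distinguishes two regimes in the set $\{d(x,\uuu)^2 + |\xi|^2 \geq \delta\}$: when $|\xi|$ stays bounded below, Lemma \ref{a7} ii) gives $G(\xi) \leq 1 - \eta$ while Lemma \ref{a7} iii) gives $G(i\nabla\phi) \geq 1$, so $\widetilde p_0 \geq \eta$; when $d(x,\uuu)$ stays bounded below, the Morse assumption combined with Hypothesis \ref{h1} yields $|\nabla\phi(x)| \geq c > 0$ uniformly, hence $G(i\nabla\phi(x)) \geq 1 + \eta'$ by Lemma \ref{a7} iii), while $G(\xi) \leq 1$ on $\R^d$. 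Finally, $V_h(x) - G(\xi)$ lies in $S^0_{\infty}(1)$ and is classical by Lemma \ref{a5} together with the global analyticity of $G$ in $\xi$.

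Applying Theorem \ref{e4} to $\beta_d^{-1}\widetilde P_h$ produces a classical symbol $q'' \in S^0_{cl}(\D\A)$ with $q'' = q''_0 + S^0(h\,\Xi\A)$, $q''_0(x,\xi) = \Id + \ooo(|(x-\u,\xi)|^2)$ near each $\u \in \uuu$, and
\begin{equation*}
\beta_d^{-1} \widetilde P_h = d_{\phi,h}^{*} (Q'')^{*} Q'' d_{\phi,h} , \qquad Q'' = \Op(q'') .
\end{equation*}
Setting $Q := \beta_d^{1/2} Q''$ absorbs the scaling factor and yields $\widetilde P_h = d_{\phi,h}^{*} Q^{*} Q d_{\phi,h}$ with $q_0 = \beta_d^{1/2}\Id + \ooo(|(x-\u,\xi)|^2)$, as stated. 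Since $a_h$ is multiplication by a real bounded function, hence self-adjoint, we conclude
\begin{equation*}
P_h = a_h \widetilde P_h a_h = a_h d_{\phi,h}^{*} Q^{*} Q d_{\phi,h} a_h = L_\phi^{*} L_\phi \quad \text{with} \quad L_\phi = Q d_{\phi,h} a_h .
\end{equation*}
The only delicate point is really the bookkeeping that matches the principal symbol of $\widetilde P_h$ with the normal form demanded by Theorem \ref{e4}: it is precisely the coefficient $\beta_d$ in the quadratic part of $\widetilde p_0$ that produces the $\beta_d^{1/2}\Id$ in $q_0$, and the fact that the next correction is of order four (rather than three) that upgrades the remainder in $q_0$ from $\ooo(|(x-\u,\xi)|)$ to $\ooo(|(x-\u,\xi)|^2)$.
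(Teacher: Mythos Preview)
Your proof is correct and follows essentially the same route as the paper: reduce to $\widetilde P_h = V_h(x) - G(hD_x)$, verify the hypotheses of Theorem~\ref{e4} for $\beta_d^{-1}\widetilde P_h$ (using Lemma~\ref{a5} and Lemma~\ref{a7}), apply the theorem in the stronger case $r = \ooo(|(x-\u,\xi)|^4)$, and then reinsert the multiplication by $a_h$. You spell out a bit more than the paper does---the two-regime argument for hypothesis~iii) and the explicit rescaling $Q = \beta_d^{1/2} Q''$---but these are elaborations of the same argument, not a different one.
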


\begin{proof}
Since we know that $P_h=  a_h ( V_h(x) - \G) a_h $, we only have to prove
that $\beta_d^{-1} \widetilde{P}_h$ satisfies the asumptions of Theorem \ref{e4}, where
\be
\widetilde{P}_h=V_h(x)-G(hD).
\ee

Assumption i) is satisfied by construction.

Observe that thanks to Proposition \ref{a11}, it is a pseudodifferential operator and since variable $x$ and $\xi$ are separated, its symbol in any quantization is given by  $\tilde{p}_h(x,\xi)=V_h(x)-G(\xi)$. Moreover, Lemma \ref{a5} and   Proposition \ref{a11} show that
$\tilde{p}_h$ admits a classical expansion $\tilde{p}=\sum_{j=0}^\infty h^j\tilde{p}_j$ with $\tilde{p}_j$, $j\geq 1$ depending only on $x$ and $\tilde{p}_0(x,\xi)=G(i\nabla\phi(x))-G(\xi)$.
Hence, it follows from Lemma \ref{a7} that $\tilde{p}$ satisfies assumptions ii) and iii).

Finally, it follows from ii) of Lemma \ref{a7} that near $( \u , 0 )$ (for any $\u \in \uuu$) we have
\begin{equation*}
\tilde{p}(x,\xi)=\beta_d(|\xi|^2+|\nabla\phi(x)|^2)+\ooo(|(x- \u ,\xi)|^4)+S^0(h) ,
\end{equation*}
so that we can apply Theorem \ref{e4} in the case where $r=\ooo(|(x- \u ,\xi)|^4)$ .
Taking into account the multiplication part $a_h$ completes the proof for $P_h$.
\end{proof}

Now we can do the

{\it Proof of Theorem \ref{e4}.} Given a symbol $p\in S^0(1)$ we recall first the well-known left and  right  quantization
\begin{equation}
\Op^l(p)u(x)=(2\pi h)^{-d}\int_{T^*\R^d} e^{ih^{-1}(x-y)\xi}p(x,\xi)u(y)dyd\xi ,
\end{equation}
and
\begin{equation}
\Op^r(p)u(x)=(2\pi h)^{-d}\int_{T^*\R^d} e^{ih^{-1}(x-y)\xi}p(y,\xi)u(y)dyd\xi ,
\end{equation}
Finally, if $p(x,y,\xi)$ belongs to $S^0(1)$, we define $\widetilde\Op(p)$, by
\begin{equation}
\widetilde\Op( p)(u)(x)=(2\pi h)^{-d}\int_{T^*\R^d} e^{ih^{-1}(x-y)\xi}p(x,y,\xi)u(y)dyd\xi,
\end{equation}
We recall the formula allowing to pass from one of these quantizations to the other. If $p(x,y,\xi)$ belongs to $S^0(1)$, then
$\widetilde\Op(p)=\Op^l(p_l)=\Op^r(p_r)$ with
\begin{equation} \label{e9}
p_l(x,\xi)=(2\pi h)^{-d}\int_{T^*\R^d}e^{ih^{-1}z(\xi'-\xi)}p(x,x-z,\xi')d\xi'dz ,
\end{equation}
and
\begin{equation} \label{e10}
p_r(y,\xi)=(2\pi h)^{-d}\int_{T^*\R^d}e^{ih^{-1}z(\xi'-\xi)}p(y+z,y,\xi')d\xi'dz .
\end{equation}
Recall that we introduced the $d$-matrix-weight, $\A:T^*\R^d\rightarrow \M_d$ given by $\A_{i,j}(x,\xi)=(\<\xi_i\>\<\xi_j\>)^{-1}$.
Suppose eventually that $p$ satisfies the hypotheses of Theorem \ref{e4}: $P=\Op( p)$ with $p\in S^0_\infty(1)$, $p(x,\xi;h)=p_0(x,\xi)+S^0(h)$  such that
\begin{enumerate}[i)]
\item $P(e^{-\phi/h})=0$,
\item For all $x\in\R^d$, the function $\xi\in\R^d\mapsto  p(x,\xi;h)$ is even,
  \item $\forall\delta>0,\ \exists\alpha>0, \ \forall (x,\xi)\in T^*\R^d, \quad (d(x,\uuu)^2+|\xi|^2\geq\delta\Longrightarrow p_0(x,\xi)\geq\alpha)$,
 \item near any critical points $\u \in \uuu$ we have
\begin{equation*}
p_0(x,\xi)=|\xi|^2+|\nabla\phi(x)|^2+r(x,\xi) ,
\end{equation*}
 with either $r=\ooo(|(x-\u ,\xi)|^3)$ (assumption (A2)), or $r=\ooo(|(x-\u ,\xi)|^4)$ (assumption (A2')).
 \end{enumerate}
Observe here that the symbol $p$ may depend on $h$ but we omit this dependance in order to enlight the notations.

The proof goes in several steps. First we prove that there exists a symbol $\hat q\in S^0_\infty(\A)$ such that
\bes
P_h=\Dhs\widehat Q \Dh
\ees
where $\widehat Q=\Op(\hat q)$.

In a  second time we shall prove that the operator $\widehat Q$ can be chosen so that $\widehat Q=Q^*Q$ for some
pseudodifferential operator  $Q$ satisfying some good properties.

Let us start with the first step. For this purpose we need the following lemma
\begin{lemma}\sl \label{e35}
 Let $p\in S^0_\infty(1)$ and $P_h=\Op(p)$.  Assume that for all $x\in\R$, the function
 $\xi\mapsto  p(x,\xi;h)$ is  even. Suppose also that $P_h(e^{-\phi/h})=0$. Then there exists $\hat q\in  S^0_\infty(\A)$ such that
 $P_h=\Dh^*\widehat Q\Dh$ with $\widehat Q=\Op(\hat q)$. Moreover, if
 $p$ has a principal symbol, then so does $\hat q$ and if $p\in S^0_{\infty,cl}$ then $\hat q\in S^0_{\infty,cl}$ .
 \end{lemma}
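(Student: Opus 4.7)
The plan is to construct $\hat q$ as an asymptotic expansion $\hat q \sim \sum_{j\geq 0} h^{j}\hat q_{j}$ in $S^{0}_{\infty}(\A)$ by solving a cascade of symbolic equations, then pass to an exact factorization by Borel summation and absorption of the $h^{\infty}$ remainder.

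The central algebraic step is the construction of the principal part $\hat q_{0}$. The hypothesis $P_{h}(e^{-\phi/h})=0$, combined with the analyticity of $p$ in $\xi$ in the strip $|\Im \xi|<\tau$ and the boundedness of $|\nabla\phi|$, shows at the level of the principal symbol that $p_{0}(x,i\nabla\phi(x))=0$ (equivalently, $e^{\phi/h}P_{h}e^{-\phi/h}$ annihilates the constant $1$). The parity hypothesis on $p$ then forces $p_{0}(x,-i\nabla\phi(x))=0$ as well. With these two vanishings I would apply the Hadamard division lemma twice. First write
\begin{equation*}
p_{0}(x,\xi)=\sum_{j}(\xi_{j}-i\partial_{j}\phi)f_{j}(x,\xi),\qquad f_{j}(x,\xi)=\int_{0}^{1}(\partial_{\xi_{j}}p_{0})\bigl(x,\,i\nabla\phi+t(\xi-i\nabla\phi)\bigr)dt,
\end{equation*}
and then verify that $f_{j}(x,-i\nabla\phi)=0$: after the change of variable $s=1-2t$ this quantity equals $\tfrac{1}{2}\int_{-1}^{1}(\partial_{\xi_{j}}p_{0})(x,s\,i\nabla\phi)\,ds$, whose integrand is odd in $s$ since $\partial_{\xi_{j}}p_{0}$ is odd in $\xi$ by parity of $p_{0}$. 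A second Hadamard division then yields $f_{j}(x,\xi)=\sum_{k}(\xi_{k}+i\partial_{k}\phi)h_{jk}(x,\xi)$, whence
\begin{equation*}
p_{0}(x,\xi)=\sum_{j,k}(\xi_{j}-i\partial_{j}\phi)(\xi_{k}+i\partial_{k}\phi)\,h_{jk}(x,\xi).
\end{equation*}
Rearranging signs to match the Weyl symbols of $\Dh$ (namely $i\xi+\nabla\phi$) and $\Dhs$ (namely $-i\xi+\nabla\phi$) identifies $\hat q_{0}=(h_{kj})_{j,k}$, symmetrized if needed, as the desired leading symbol; the entrywise decay in the matrix weight $\A$ comes from Cauchy estimates on the iterated $\xi$-derivatives of $p_{0}$ inside the analyticity strip.

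Once $\hat q_{0}$ is in hand, I would iterate. Set $R_{1}=h^{-1}\bigl(P_{h}-\Dhs\Op(\hat q_{0})\Dh\bigr)$ and check that $R_{1}$ inherits the three key hypotheses of the lemma: analyticity in the same strip is preserved by the calculus; the equality $R_{1}(e^{-\phi/h})=0$ is automatic since both $P_{h}$ and $\Dhs\Op(\hat q_{0})\Dh$ kill $e^{-\phi/h}$; and evenness of the $\xi$-symbol follows from a short Moyal computation, using that $\Dh$ and $\Dhs$ have principal symbols of opposite $\xi$-parity, so their sandwich against a symmetric $\xi$-even matrix is again $\xi$-even. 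Applying the same algorithm to $R_{1}$ produces $\hat q_{1}$, and inductively every $\hat q_{j}$. Borel summation in $S^{0}_{\infty}(\A)$ yields an honest symbol $\hat q$ with the prescribed asymptotic expansion, and the classical structure is carried through each step, giving the two refinements stated at the end of the lemma. The remaining discrepancy $P_{h}-\Dhs\Op(\hat q)\Dh$ has symbol in $h^{\infty}S^{0}_{\infty}(\A)$ and still annihilates $e^{-\phi/h}$; it can be absorbed into $\hat q$ by a last explicit adjustment, obtained by factoring this smoothing correction once more through $\Dh$ using that $\ker\Dh$ is spanned by $e^{-\phi/h}$.

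The step I expect to be the main obstacle is keeping the construction strictly inside $S^{0}_{\infty}(\A)$ at every stage: the two successive Hadamard divisions must preserve both the analyticity strip and the refined decay $\A_{j,k}=1/(\<\xi_{j}\>\<\xi_{k}\>)$, rather than merely yielding a naive scalar division of $p_{0}$ by $|\xi|^{2}+|\nabla\phi|^{2}$, which would break smoothness at the critical points of $\phi$. The parity cancellation $f_{j}(x,-i\nabla\phi)=0$ is precisely what makes the matrix-valued double division smooth at $(\u,0)$ for $\u\in\uuu$, and is thus the conceptual heart of the argument. Verifying that each inductive step genuinely preserves $\xi$-parity under the Moyal expansion, and that the final absorption of the $h^{\infty}$ remainder does not destroy the exact factorization, will each require a dedicated but routine check.
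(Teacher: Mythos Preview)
Your approach is asymptotic: build $\hat q$ order by order via symbolic division, Borel-sum, then absorb an $h^\infty$ remainder. The paper's proof is genuinely different and \emph{exact} from the start. It conjugates $P_h$ by $e^{\phi/h}$ using the Kuranishi trick (this is where analyticity of $p$ in $\xi$ is used essentially: the conjugate $e^{\phi/h}\Op(p)e^{-\phi/h}$ is an honest pseudodifferential operator with amplitude $p(\tfrac{x+y}{2},\xi+i\theta(x,y))$). Since $P_h(e^{-\phi/h})=0$, the conjugated left-symbol vanishes at $\xi=0$, and one factors $\xi$ out in left quantization to get a right-factor $d_{\phi,h}$. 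The remaining piece is then conjugated by $e^{-2\phi/h}$ and passed to right quantization; an explicit change-of-variable computation (the symmetry $(s,z)\mapsto(1-s,-z)$ together with evenness of $z\mapsto\hat p^2(x,z)$) shows its symbol again vanishes at $\xi=0$, allowing a second exact factorization by $\xi$ on the left, which becomes $d_{\phi,h}^*$ after undoing the conjugation. No asymptotic series, no Borel summation.

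The gap in your scheme is the last step. Given $R:=P_h-d_{\phi,h}^*\Op(\hat q)d_{\phi,h}\in h^\infty\Psi^0(1)$ with $R(e^{-\phi/h})=0$, you need $R=d_{\phi,h}^*\widetilde R\,d_{\phi,h}$ with $\widetilde R\in h^\infty\Psi^0(\A)$. Knowing that $\ker d_{\phi,h}$ is spanned by $e^{-\phi/h}$ does not give this: it at best yields some right-inverse $B$ with $R=B\,d_{\phi,h}$, but says nothing about $B$ being pseudodifferential, let alone about a further left-factorization $B=d_{\phi,h}^*C$. The latter requires $R^*(e^{-\phi/h})=0$, hence evenness of the \emph{full} symbol of $R$ in $\xi$; this in turn requires that each $d_{\phi,h}^*\Op(\hat q_j)d_{\phi,h}$ have $\xi$-even Weyl symbol to all Moyal orders. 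Your $\hat q_j$ come from a double straight-line Hadamard division and are not themselves $\xi$-even, so this is not the routine check you suggest. Even granting all that, producing a \emph{pseudodifferential} $C$ in the correct class from the smoothing $R$ is exactly the original problem restated for $R$; the iteration does not close. The paper avoids the difficulty entirely by never leaving the exact level.
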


\begin{remark}\sl
Since $P_h(e^{-\phi/h})=0$, it is quite clear that $P_h$ can be factorized by $d_{\phi,h}$ on the right. On the other hand, the fact that $P_h$ can be factorized by 
$d_{\phi,h}^*$ on the left necessarily implies that $P_h^*(e^{-\phi/h})=0$. At a first glance,  there is no reason for this identity to hold true since we don't suppose in the above lemma that $P_h$ is self-adjoint. This is actually verified for the following reason. Start from $\Op( p)(e^{-\phi/h})=0$, then taking the conjugate and using the fact that $\phi$ is real we get
$$\Op(\overline p(x,-\xi))(e^{-\phi/h})=0.$$
Hence, the parity assumption on $p$ implies that $\Op( p)^*(e^{-\phi/h})=0.$
\end{remark}
Let us now give the proof of the lemma.

\begin{proof}
The fondamental very simple remark is that if $a$ is a symbol such that $a(x,\xi)=b(x,\xi)\cdot\xi$ then the operator
$\Op^l(a)$ can be factorized by $hD_x$ on the right: $\Op^l(a)=\Op^l(b)\cdot hD_x$ whereas the right-quantization of $a$ can be factorized on the left:
$\Op^r(a)=hD_x\cdot \Op^r(b)$. We have to implement this simple idea, dealing with the fact that our operator are twisted by $e^{\phi/h}$.

Introduce the operator $P_{\phi,h}=e^{\phi/h}P_he^{-\phi/h}$.
Then, for any $u\in\sss(\R^d)$
\begin{equation*}
P_{\phi,h} u(x)=(2\pi h)^{-d}\int e^{ih^{-1}(x-y)\xi}e^{h^{-1}(\phi(x)-\phi(y))}p \Big( \frac{x+y}2,\xi \Big) u(y)dyd\xi.
\end{equation*}
We now use the Kuranishi trick. Let $\theta(x,y)=\theta_h(x,y)\in\R^d$ be defined by $\phi(x)-\phi(y)=(x-y)\cdot \theta(x,y)$.
Then
\begin{equation*}
P_{\phi,h} u(x)=(2\pi h)^{-d}\int e^{ih^{-1}(x-y)(\xi-i\theta(x,y))} p \Big( \frac{x+y}2,\xi \Big) u(y)dyd\xi
\end{equation*}
and since $p\in S^0_\infty$, a simple change of integration path shows that
 $P_{\phi,h}$ is a bounded pseudodifferential operator $P_{\phi,h}=\widetilde \Op(\tilde p_\phi)$
with
\begin{equation*}
\tilde p_\phi(x,y,\xi)=p \Big( \frac{x+y}2,\xi+i\theta(x,y) \Big) .
\end{equation*}
To get the expression of $P_{\phi,h}$ in left quantization, it suffice then to apply formula \eqref{e9} to get
 $P_{\phi,h}=\Op^l(p_\phi)$ with
\begin{align*}
p_\phi(x,\xi)&=(2\pi h)^{-d}\int_{\R^{2d}}e^{ih^{-1}(\xi'-\xi)(x-z)}p \Big( \frac{x+z}2,\xi'+i\theta(x,z) \Big) d\xi'dz\\
 &=(2\pi h)^{-d}\int_{\R^{2d}}e^{ih^{-1}\xi'(x-z)}p \Big( \frac{x+z}2,\xi'+\xi+i\theta(x,z) \Big) d\xi'dz.
\end{align*}
Observe that for any smooth function $g:\R^d\rightarrow \R$ we have
\begin{equation} \label{e11}
 g(\xi)-g(0)=\sum_{j=1}^d\int_0^1\xi_j\partial_{\xi_j}g(\gamma_j^\pm(s,\xi))ds,
\end{equation}
 with $\gamma_j^+(s,\xi)=(\xi_1,\dots,\xi_{j-1},s\xi_j,0,\ldots,0)$ and
 $\gamma_j^-(s,\xi)=(0,\ldots,0,s\xi_j,\xi_{j+1},\ldots,\xi_d)$.
 A very simple observation is that for any $(x,\xi)\in T^*\R^d$ and any $s\in [0,1]$ we have
$x\cdot\gamma_j^\pm(s,\xi)=\gamma_j^\pm(s,x)\cdot\xi$. This will be used often in the sequel.

 Let us go back to the study of $p_\phi$. Since $P_h(e^{-\phi/h})=0$, then  $p_\phi(x,0)=0$ and  by \eqref{e11}, we get
\begin{equation*}
p_\phi(x,\xi)=\sum_{j=1}^d\xi_j\check q^\pm_{\phi,j}(x,\xi)=\sum_{j=1}^d\xi_j\check q_{\phi,j}(x,\xi) ,
\end{equation*}
with $\check q_{\phi,j}=(\check q^+_{\phi,j}+\check q^-_{\phi,j})/2$ and
\begin{equation*}
\check q_{\phi,j}^\pm(x,\xi)=(2\pi h)^{-d}\int_{\R^{2d}}e^{ih^{-1}\xi'(x-z)}\int_0^1\partial_{\xi_j} p \Big( \frac{x+z}2,\xi'+\gamma_j^\pm(s,\xi)+i\theta(x,z) \Big) dsdzd\xi',
\end{equation*}
where the above integral has to be understood as an oscillatory integral. Since $\partial_\xi^\alpha p$ is bounded for any $\alpha$,
then integration by parts with respects to $\xi'$ and $z$ show that $q^\pm_{\phi,j}\in S_\infty^0(1)$. Moreover, by definition of $\gamma_j^\pm$, we have
\begin{equation*}
\xi_jq^\pm_{\phi,j}=(2\pi h)^{-d}\int_{\R^{2d}}e^{ih^{-1}\xi'(x-z)}c^\pm_j(x,z,\xi)dzd\xi ,
\end{equation*}
with
$c^\pm_j(x,z,\xi)=p(\frac{x+z}2,\xi'+\gamma_j^\pm(1,\xi)+i\theta(x,z))-p(\frac{x+z}2,\xi'+\gamma_j^\pm(0,\xi)+i\theta(x,z))$. This symbol is clearly in $S^0_\infty(1)$ so that
integration by parts as before show that $\xi_jq^\pm_{\phi,j}\in S^0_\infty(1)$. Since $\xi_j$ and $ q^\pm_{\phi,j}$ are both scalar, this proves that $q^\pm_{\phi,j}\in S^0_\infty(\<\xi_j\>^{-1})$.

Observe now that
\begin{equation*}
P_h=e^{-\phi/h}P_{\phi,h} e^{\phi/h}=e^{-\phi/h}\Op^l \Big( \frac{\check q^+_\phi+\check q^-_\phi}2 \Big) \cdot \Big( \frac h i\nabla_x \Big) e^{\phi/h}
=e^{-\phi/h}\widetilde Qe^{\phi/h}\cdot\Dh ,
\end{equation*}
with $\widetilde Q=\frac 1 {2i}(\widetilde Q^++\widetilde Q^-)$ and $\widetilde Q^\pm=\Op^l(\check q^\pm_\phi)$.
Let $\widetilde Q^\pm_\phi=e^{-2\phi/h}\Op^l(\check q^\pm_\phi)e^{2\phi/h}$, then $\widetilde Q^\pm_\phi=\widetilde\Op(\tilde q^\pm_\phi)$ with
$\tilde q^\pm_\phi=(\tilde q^\pm_{\phi,1},\ldots,\tilde q^\pm_{\phi,d})$ and
\begin{equation*}
\begin{split}
\tilde q^\pm_{\phi,j}(x,y,\xi)&=\check q^\pm_{\phi,j}(x,\xi-2i\theta(x,y))
=(2\pi h)^{-d}\int_{\R^{2d}}e^{ih^{-1}\xi'(x-z)}\\
&\int_0^1 \partial_{\xi_j} p \Big( \frac{x+z}2,\xi'+\gamma_j^\pm(s,\xi)-2i\gamma_j^\pm(s,\theta(x,y))+i\theta(x,z) \Big) dsdzd\xi' ,
\end{split}
\end{equation*}
and it follows from \eqref{e10} that $\widetilde Q_\phi=\Op^r( \breve{q}_\phi)$ with
$\breve{q}_\phi=\breve{q}^+_\phi+\breve{q}^-_\phi$, $\breve q^\pm_\phi=( \breve q^\pm_{\phi,1},\ldots, \breve q^\pm_{\phi,d})$ and
\begin{align*}
\breve{q}^\pm_{\phi,j} (x,\xi) &=(2\pi h)^{-d}\int_{\R^{2d}}e^{ih^{-1}(\xi'-\xi)u}\tilde q^\pm_{\phi,j}(x+u,x,\xi')dud\xi'\\
&=(2\pi h)^{-2d}\int_{\R^{4d}}\int_0^1e^{ih^{-1}[(\xi'-\xi)u+(x+u-z)\eta]}\\
\partial_{\xi_j} p & \Big( \frac {x+u+z}2,\eta+\gamma_j^\pm(s,\xi')-2i\gamma_j^\pm(s,\theta(x+u,x))+i\theta(x+u,z) \Big)ds dzdud\xi'd\eta.
\end{align*}
Make the change of variables $z=x+v$ and $\nu=\gamma_j^\pm(s,\xi')+\eta$, the above equation yields
\begin{align*}
\breve{q}^\pm_{\phi,j}(x,\xi)=(2\pi h)^{-2d}\int_{\R^{4d}}\int_0^1&e^{ih^{-1}[(\xi'-\xi)u +(u-v)(\nu-\gamma_j^\pm(s,\xi'))]}\\
&\phantom{***}\partial_{\xi_j} p \Big( x+\frac{u+v}2,\nu+\psi_j^\pm(s,x,u,v) \Big)ds du dv d\nu d\xi' ,
\end{align*}
with
$\psi_j^\pm(s,x,u,v)=i\theta(x+u,x+v)-2i\gamma_j^\pm(s,\theta(x+u,x))$.

Denote $\hat p^2(x,z)=\int e^{-iz\xi}p(x,\xi)d\xi$ the Fourier transform of $p$ with respect to the second variable and observe that since
$\xi\mapsto p(x,\xi)$ is even, then so is $z\mapsto \hat p^2(x,z)$.
Using the above notations, we have $$\partial_{\eta_j} p(x,\eta)=i\int_{\R^d}e^{iz\eta}z_j \hat p^2(x,z)dz,$$
and we get
\begin{align*}
\breve{q}^\pm_{\phi,j}(x,\xi)=i(2\pi h)^{-2d}\int_{\R^{4d}\times[0,1]\times\R^d}&ze^{ih^{-1}[(u-v+hz)\nu+(\xi'-\xi)u-(u-v)\gamma_j^\pm(s,\xi')]}\\
&\hat p^2 \Big( x+\frac{u+v}2,z \Big) e^{iz\psi_j^\pm(s,x,u,v)}dudvd\xi'd\nu ds dz.
\end{align*}

Let $\F_{h,\nu\mapsto v}$ denote the semiclassical Fourier transform with respect to variable $\nu$ and $\overline\F_{h,u\mapsto \nu}$ its inverse.
Denoting
\begin{equation*}
f_{s,x,v,z}(u)=z\hat p^2 \Big( x+\frac{u+v}2,z \Big) e^{ih^{-1}[  (\xi'-\xi)u-(u-v)\gamma_j^\pm(s,\xi') ]}e^{iz\psi_j^\pm(s,x,u,v)} ,
\end{equation*}
we get
\begin{align*}
\breve{q}^\pm_{\phi,j}(x,\xi)&=i(2\pi h)^{-d}\int_{\R^{2d}\times[0,1]\times\R^d}\F_{h,\nu\mapsto v}\overline\F_{h,u\mapsto\nu}(f_{s,x,v,z})(v-hz) dvd\xi' ds dz\\
&=i(2\pi h)^{-d}\int_{\R^{2d}\times[0,1]\times\R^d} ze^{ih^{-1}[ (\xi'-\xi)(v-hz)+h\gamma_j^\pm(s,z)\xi'    ]}\\
&\phantom{************}\hat p^2(x+v-hz/2,z)e^{i\psi_j^\pm(s,x,v-hz,v)z}dvd\xi' ds dz ,
\end{align*}
where we have used the fact that $\gamma_j^\pm(s,\xi')z  =\gamma_j^\pm(s,z)\xi' $.
Similarly, integrating with respect to $\xi'$ and $v$, we obtain
\begin{equation*}
\breve{q}^\pm_{\phi,j}(x,\xi)=i\int_{[0,1]\times\R^d}z_j e^{i\gamma_j^\pm(s,z)\xi}\hat p^2 \Big( x+h \big( \frac z 2-\gamma_j^\pm(s,z) \big) ,z \Big) e^{\varphi_j^\pm(s,z)}dsdz ,
\end{equation*}
with $\varphi_j^\pm(s,z)=iz\psi_j^\pm(s,x,-h\gamma_j^\pm(s,z),h(z-\gamma_j^\pm(s,z))$.
From the definition of $\psi_j^\pm$, we get
\begin{align*}
\varphi_j^\pm(s,z)&=2z\gamma_j^\pm(s,\theta(x-h\gamma_j^\pm(s,z),x))-z\theta(x-h\gamma_j^\pm(s,z),x+h(z-\gamma_j^\pm(s,z)))\\
&=2\gamma_j^\pm(s,z)\theta(x-h\gamma_j^\pm(s,z),x)-z\theta(x-h\gamma_j^\pm(s,z),x+h(z-\gamma_j^\pm(s,z))) ,
\end{align*}
and since $\theta$ is defined by $\phi(x)-\phi(y)=(x-y)\theta(x,y)$, it follows easily that
\begin{equation*}
\varphi_j^\pm(s,z)=\frac 1{h}(2\phi(x)-\phi(x-h\gamma_j^\pm(s,z))-\phi(x+h(z-\gamma_j^\pm(s,z))).
\end{equation*}
Let us denote $\rho_j^\pm(x,s,z)=\hat p^2(x+h(\frac z 2-\gamma_j^\pm(s,z)),z)$, then
\begin{equation}\label{e12}
\breve{q}^\pm_{\phi,j}(x,0)=i\int_{[0,1]\times\R^d}z_j \rho_j^\pm(x,s,z)e^{\varphi_j^\pm(s,z)}dsdz.
\end{equation}
Observe now that we have the following identities
\be
\begin{split}
\gamma_j^\pm(1-s,-z)&=-(z-\gamma_j^\mp(s,z)) ,  \\
\frac z 2-\gamma_j^\pm(s,z)&=\frac {-z} 2-\gamma_j^\mp(1-s,-z) ,
\end{split}
\ee
for all $s\in[0,1],z\in\R^d$. In particular, since $\hat p^2$ is even with respect to the second variable, we get
\begin{equation*}
\rho_j^\pm(x,1-s,-z)e^{\varphi_j^\pm(1-s,-z)}=\rho_j^\mp(x,s,z)e^{\varphi_j^\mp(s,z)}.
\end{equation*}
As a consequence, using the change of variable $(s,z)\mapsto (1-s,-z)$ in \eqref{e12}, we get
$\breve{q}^+_{\phi,j}(x,0)=-\breve{q}^-_{\phi,j}(x,0)$ and hence
$\breve q_\phi(x,0)=0$.
Since $\breve q_{\phi,j}$ belongs to $S^0_\infty(\<\xi_j\>^{-1})$ for all $j$, we get by using the same trick as for the right-factorization that there exists
some symbol $q=(\overline q_{j,k})\in S^0_\infty(\A)$ such that
$\breve q_{\phi,j}(x,\xi)=\sum_{k=1}^d\xi_k\overline q_{j,k}(x,\xi)$.
Since we use right-quantization, it follows that for all $u\in \mathcal{S}(\R^d,\C^d)$,
\begin{equation*}
\Op^r(\breve{q}_\phi)u=\frac h i \operatorname{div}\Op^r(\overline q)u=hD_x^*\Op^r(\overline q)u ,
\end{equation*}
where we have used the matrix-valued symbol $q=(q_{j,k})$.
Consequently, for all $u\in\mathcal{S}(\R^d)$,
\begin{equation*}
P_hu=e^{\phi/h}\Op^r(\breve{q}_\phi)e^{-\phi/h}\Dh u=\Dhs e^{\phi/h}\Op^r(\overline q)e^{-\phi/h}\Dh u.
\end{equation*}
Using again analyticity of $\overline q$, there exists $\hat q\in S^0_\infty(A)$ such that
\begin{equation*}
\widehat{Q} : = e^{\phi / h} \Op^r ( \overline{q} ) e^{- \phi / h} = \Op ( \hat{q} ) ,
\end{equation*}
and the factorization  is proved. The fact that $\hat q$ admits an expansion in powers of $h$ follows easily from the above computations, since it is the case for $p$.
\end{proof}

Let us apply Lemma \ref{e35} to $P_{h} = \Op ( p )$. Then, there exists a symbol $\widehat{q} \in S_{\infty}^{0} ( \A )$ such that
\begin{equation*}
P_{h} = d_{\phi , h}^{*} \widehat{Q} d_{\phi , h} ,
\end{equation*}
with $\widehat{Q} = \Op ( \widehat{q} )$ and $\hat q=\hat q_0+S^0(h)$. Now the strategy is the following. We will modify the operator $\widehat{Q}$ so that the new $\widehat{Q}$ is selfadjoint, non-negative and $\widehat{Q}$ can be written as the square of a pseudodifferential operator $\widehat{Q} = Q^{*} Q$.

First observe that since $P_{h}$ is selfadjoint,
\begin{equation*}
P_{h} = \frac{1}{2} ( P_{h} + P_{h}^{*} ) = d_{\phi , h}^{*} \frac{\widehat{Q} + \widehat{Q}^*}{2} d_{\phi , h} ,
\end{equation*}
so that we can assume in the following that $\widehat{Q}$ is selfadjoint. This means that the partial operators $\widehat{Q}_{j , k} = \Op ( \widehat{q}_{j , k} )$ verify $\widehat{Q}_{j , k}^{*} = \widehat{Q}_{k , j}$ (or at the level of symbols $\hat{q}_{k , j} = \overline{\hat{q}_{j , k}} )$. For $k = 1 , \ldots , d$, let us denote $d_{\phi , h}^{k} = h \partial_{k} + \partial_{k} \phi (x)$. Then
\begin{equation} \label{e13}
P_{h} = \sum_{j , k = 1}^{d} ( d_{\phi , h}^{j} )^{*} \widehat{Q}_{j , k} d_{\phi , h}^{k} .
\end{equation}

We would like to take the square root of $\widehat{Q}$ and show that it is still a pseudodifferential operator. The problem is that we don't even know if $\widehat{Q}$ is non-negative. Nevertheless, we can use the non-uniqueness of operators $\widehat{Q}$ such that \eqref{e13} holds to go to a situation where $\widehat{Q}$ is close to a diagonal operator with non-negative partial operators on the diagonal. The starting point of this strategy is the following commutation relation
\begin{equation}
\forall j , k \in \{ 1 , \ldots , d \} , \qquad \big[ d_{\phi , h}^{j} , d_{\phi , h}^{k} \big] = 0 ,
\end{equation}
which holds true since $d_{\phi , h}^{j} = e^{- \phi / h} h \partial_{j} e^{\phi / h}$ and thanks to Schwarz Theorem. Hence, for any bounded operator $B$, we have
\begin{equation} \label{e33}
P_{h} = d_{\phi , h}^{*} \widehat{Q}^{mod , \bullet} d_{\phi , h} = \sum_{j , k = 1}^{d} ( d_{\phi , h}^{j} )^{*} \widehat{Q}^{mod , \bullet}_{j , k} d_{\phi , h}^{k} ,
\end{equation}
with $\widehat{Q}^{mod , \bullet} = \widehat{Q} + \B^{\bullet}$, $\bullet \in \{ 0 , \infty \}$, for some $\B^{\bullet}$ having one of the two following forms:
\begin{itemize}
\item[$\bullet$] Exchange between three coefficients. For any $j_{0} , k_{0} , n \in \{ 1 , \ldots , d\}$, the operator $\B^{\infty} ( j_{0} , k_{0} , n ; B ) = ( \B^{\infty}_{j , k} )_{j , k = 1 , \ldots , d}$ is defined by
\begin{gather}
\B^{\infty}_{j , k} =0 \qquad \text{if }( j , k ) \notin \{ ( n , n ) , ( j_{0} , k_{0} ) , ( k_{0} , j_{0} ) \}   \nonumber \\
\B^{\infty}_{j_{0} , k_{0}} = - ( d_{\phi , h}^{n} )^{*} B d_{\phi , h}^{n} \qquad \text{and} \qquad \B^{\infty}_{k_{0} , j_{0}} = ( \B^{\infty}_{j_{0} , k_{0}} )^{*}   \label{e27} \\
\B^{\infty}_{n , n} = ( d_{\phi , h}^{j_{0}} )^{*}B d_{\phi , h}^{k_{0}} + ( d_{\phi , h}^{k_{0}} )^{*} B^{*} d_{\phi , h}^{j_{0}} . \nonumber
\end{gather}
When $j_{0} = k_{0}$, we use the convention that $\B^{\infty}_{j_{0} , j_{0}} = - ( d_{\phi , h}^{n} )^{*} ( B + B^{*} ) d_{\phi , h}^{n}$. Such modifications will be used away from the critical points.

\item[$\bullet$] Exchange between four coefficients. For any $j_{0} , k_{0} , k_{1} \in \{ 1 , \ldots , d \}$, the operator $\B^{0} ( j_{0} , k_{0} , k_{1} ; B ) = ( \B^{0}_{j , k} )_{j , k = 1 , \ldots , d}$ is defined by
\begin{gather}
\B^{0}_{j , k} = 0 \qquad \text{if } ( j , k ) \notin \{ ( j_{0} , k_{0} ) , ( k_{0} , j_{0} ) , ( j_{0} , k_{1} ) , ( k_{1} , j_{0} ) \}  \nonumber \\
\B_{j_{0} , k_{0}}^{0} = - B d_{\phi , h}^{k_{1}} \qquad \text{and} \qquad \B_{k_{0} , j_{0}}^{0} = ( \B_{j_{0} , k_{0}}^{0} )^{*}  \label{e28} \\
\B_{j_{0} , k_{1}}^{0} = B d_{\phi , h}^{k_{0}} \qquad \text{and} \qquad \B_{k_{1} , j_{0}}^{0} = ( \B_{j_{0} , k_{1}}^{0} )^{*} . \nonumber
\end{gather}
Such modifications will be used near the critical points.
\end{itemize}
Recall that the $d$-matrix-weights $\A$ and $\D \A$ are given by $\A_{j , k} = \< \xi_{j} \>^{- 1} \< \xi_{k} \>^{- 1}$ and $( \D \A )_{j , k} = \< \xi_{k} \>^{- 1}$. Using the preceding remark, we can prove the following

\begin{lemma}\sl
Let $\widehat{Q} = \Op ( \hat{q} )$ where $\hat{q} \in S^{0} ( \A )$ is a Hermitian symbol such that $\hat q ( x , \xi ; h) = \hat q_{0} ( x , \xi ) + S^{0} ( h \A )$. We denote $P = d_{\phi , h}^{*} \widehat{Q} d_{\phi , h}$ and $p ( x , \xi ; h) = p_{0} ( x , \xi ) + S^{0} (h) \in S^{0} (1)$ its symbol. Assume that the following assumptions hold:
\begin{itemize}
\item[(A1)] $\forall \delta > 0 , \ \exists \alpha > 0 , \ \forall ( x , \xi ) \in T^{*} \R^{d} , \quad ( \vert \xi \vert^{2} + d ( x , \uuu )^{2} \geq \delta \Longrightarrow p_{0} ( x , \xi ) \geq \alpha )$.

\item[(A2)] Near $( \u , 0 )$ for any critical point $\u \in \uuu$, we have
\begin{equation} \label{e15}
p_{0} ( x , \xi ) = \vert \xi \vert^{2} + \vert \nabla \phi (x) \vert^{2} + r ( x , \xi ) ,
\end{equation}
with $r ( x , \xi ) = \ooo ( \vert ( x - \u , \xi ) \vert^{3} )$.
\end{itemize}
Then, for $h$ small enough, there exists a symbol $q \in S^{0} ( \D \A )$ such that
\begin{equation*}
P_{h} = d_{\phi , h}^{*} Q^{*} Q d_{\phi , h} ,
\end{equation*}
with $Q = \Op ( q )$ and
\begin{equation}\label{e115}
q ( x , \xi ; h ) = \Id + \ooo ( \vert ( x - \u , \xi ) \vert ) + S^{0} (h) ,
\end{equation}
near $( \u , 0 )$ for any $\u \in \uuu$. Moreover,  $Q = F \Op ( \Xi^{- 1} )$ for some $F \in \Psi^{0} (1)$ invertible and self-adjoint with $F^{- 1} \in \Psi^{0} (1)$.

If, additionally to the previous assumptions, we suppose
\begin{itemize}
\item[(A2')] the remainder term in \eqref{e15} satisfies $r ( x , \xi ) = \ooo ( \vert ( x - \u , \xi ) \vert^{4} )$ ,
\end{itemize}
then
\be\label{e1115}
q ( x , \xi ; h ) = \Id + \ooo ( \vert ( x - \u , \xi) \vert^{2} ) + S^{0} (h)
\ee
near $(\u,0)$.

Eventually, if $\widehat{q} \in S^{0}_{cl} ( \A )$ then $q \in S^{0}_{cl} ( \D \aaa )$.
\end{lemma}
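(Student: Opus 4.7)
The target is to construct $Q = F\Op(\Xi^{-1})$ by modifying $\widehat Q$ into a positive self-adjoint operator $\widehat Q^{mod}$ through the exchange moves $\B^0$ and $\B^\infty$ (which leave $P_h = \Dhs \widehat Q \Dh$ unchanged), and then extracting a matrix square root by functional calculus. A key simplification is that $\Xi$ and $\Xi^{-1}$ are diagonal matrix symbols depending only on $\xi$, so the Moyal product gives the exact identity $\Op(\Xi^{-1})\Op(\Xi) = \Id$; no residual semiclassical correction will obstruct the final factorization.

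The first step is to pin down $\hat q_0$ at each critical point. Setting $v_k(x,\xi) = i\xi_k + \partial_k\phi(x)$ so that $\Dh^k$ has principal symbol $v_k$, the principal symbol of $P_h$ reads $p_0 = \sum_{j,k}\hat q_{0,j,k}\bar v_j v_k$. At $(\u,0)$ one has $v = 0$, and comparing the quadratic part near $(\u,0)$ with the expression $|\xi|^2 + |\nabla\phi|^2$ prescribed by (A2) forces $\hat q_0(\u,0) = \Id$. All subsequent modifications will be designed to preserve this value.

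The bulk of the work is to turn $\hat q_0$ into a positive matrix globally via $\B^\infty$ moves. At the symbol level a $\B^\infty(j_0,k_0,n;B)$ move subtracts $|v_n|^2 b_0$ from $\hat q_{0,j_0,k_0}$ and adds $2\Re(\bar v_{j_0} b_0 v_{k_0})$ to $\hat q_{0,n,n}$, leaving $v^*\hat q_0 v = p_0$ untouched. Away from the critical points, assumption (A1) gives $|v|^2 \geq \alpha > 0$, so via a microlocal partition of unity I would locally pick an index $n$ with $|v_n|$ of maximal size and iteratively choose scalar symbols $b_0 \in S^0(\A)$ so as to pour the off-diagonal mass of $\Xi\hat q_0\Xi$ onto its diagonal, reaching $\Xi\hat q_0^{mod}\Xi \geq c\Id$ pointwise on $T^*\R^d$ (positivity near the critical points being inherited by continuity from $\hat q_0^{mod}(\u,0) = \Id$). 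Sharp G{\aa}rding then yields $\Op(\Xi)\widehat Q^{mod}\Op(\Xi) \geq c/2$ in $\Psi^0(1)$, and Seeley-type holomorphic functional calculus furnishes a self-adjoint invertible square root $F \in \Psi^0(1)$ with $F^{-1} \in \Psi^0(1)$ and $F^2 = \Op(\Xi)\widehat Q^{mod}\Op(\Xi)$. Setting $Q = F\Op(\Xi^{-1})$ delivers a symbol $q \in S^0_{cl}(\D\A)$ with $q_0(\u,0) = \Id$, and, thanks to $\Op(\Xi^{-1})\Op(\Xi) = \Id$, the exact identity $Q^*Q = \widehat Q^{mod}$, whence $P_h = \Dhs Q^*Q\Dh$ and $q = \Id + \ooo(|(x-\u,\xi)|) + S^0(h)$ near each critical point by Taylor expansion.

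For the refinement under (A2'), I would additionally apply $\B^0(j_0,k_0,k_1;B)$ moves near each critical point. Each such move alters $\hat q_0$ by terms carrying a single $v_k$ factor, so it vanishes at $(\u,0)$ but has arbitrary linear part there, allowing fine-tuning of the first-order jet of $\hat q_0^{mod}$. Matching the $\ooo(|(x-\u,\xi)|^3)$ data of $p_0$ against the expansion of $\sum \hat q_{0,j,k}^{mod}\bar v_j v_k$ produces a solvable linear system whose solution kills that jet, yielding $q_0 = \Id + \ooo(|(x-\u,\xi)|^2)$. The main technical obstacle I foresee is the $\B^\infty$ rearrangement: carrying out the mass transfer globally while staying in $S^0(\A)$ and preserving the classical $h$-expansion requires careful bookkeeping with the partition of unity. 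The remaining steps---G{\aa}rding, functional calculus square root, and the $\B^0$ linear-algebra at the critical points---are standard semiclassical manipulations.
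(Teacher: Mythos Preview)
Your proposal is correct and follows essentially the same route as the paper: modify $\widehat Q$ via $\B^\infty$ moves (away from $\uuu$) and $\B^0$ moves (near $\uuu$) to obtain a symbol with $\Xi \hat q_0^{mod}\Xi \geq c\,\Id$, then extract a pseudodifferential square root $F$ and set $Q = F\Op(\Xi^{-1})$. The paper organizes the $\B^\infty$ step more granularly---first killing off-diagonal entries, then redistributing diagonal mass so each entry is $\geq c\langle\xi_j\rangle^{-2}$---and makes the (A2') refinement explicit by showing that the linear part of $\hat q_0 - \Id$ satisfies an antisymmetry relation $\alpha_{j,k}^n + \alpha_{j,n}^k = 0$ which is exactly what $\B^0$ moves can cancel.
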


\begin{proof}
In the following, we assume that $\phi$ has a unique critical point $\u$ and that $\u = 0$. Using some cutoff in space, we can always make this assumption without loss of generality. Given $\varepsilon > 0$, let $w_{0} , w_{1} , \ldots , w_{d} \in S^{0} (1)$ be non-negative functions such that
\begin{equation}
w_{0} + w_{1} + \cdots + w_{d} = 1 ,
\end{equation}
whose support satisfies
\begin{equation*}
\supp ( w_{0} ) \subset \big\{ \vert \xi \vert^{2} + \vert \nabla \phi (x) \vert^{2} \leq 2 \varepsilon \big\} ,
\end{equation*}
and, for all $\ell \geq 1$,
\begin{equation*}
\supp ( w_{\ell} ) \subset \Big\{ \vert \xi \vert^{2} + \vert \nabla \phi (x) \vert^{2} \geq \varepsilon \text{ and } \vert \xi_{\ell} \vert^{2} + \vert \partial_{\ell} \phi (x) \vert^{2} \geq \frac{1}{2 d} \big( \vert \xi \vert^{2} + \vert \nabla \phi (x) \vert^{2} \big) \Big\} .
\end{equation*}
Let us decompose $\widehat{Q}$ according to these truncations
\begin{equation} \label{e34}
\widehat{Q} = \sum_{\ell = 0}^{d} \widehat{Q}^{\ell} ,
\end{equation}
with $\widehat{Q}^{\ell} : = \Op ( w_{\ell} \widehat{q} )$ for all $\ell \geq 0$. We will modify each of the operators $\widehat{Q}^{\ell}$ separately, using the following modifiers. For $j_{0} , k_{0} , n \in \{1 , \ldots , d \}$ and $\beta \in S^{0} ( \< \xi_{j_{0}} \>^{- 1} \< \xi_{k_{0}} \>^{- 1} \< \xi_{n} \>^{- 2 })$ we denote for short
\begin{equation*}
\B^{\infty} ( j_{0} , k_{0} , n ; \beta ) : = \B^{\infty} ( j_{0} , k_{0} , n ; \Op ( \beta ) ) ,
\end{equation*}
where the right hand side is defined by \eqref{e27}. The same way, given $j_{0} , k_{0} , k_{1} \in \{ 1 , \ldots , d \}$ and $\beta \in S^{0} ( \< \xi_{j_{0}} \>^{- 1} \< \xi_{k_{0}} \>^{- 1} \< \xi_{k_{1}} \>^{- 1} )$ we denote for short
\begin{equation*}
\B^{0} ( j_{0} , k_{0} , k_{1} ; \beta ) : = \B^{0} ( j_{0} , k_{0} , k_{1} ; \Op ( \beta ) ) ,
\end{equation*}
where the right hand side is defined by \eqref{e28}. Observe that any operator of one of these two forms belongs to $\Psi^{0} ( \A )$. Let $\M ( \A ) \subset \Psi^{0} ( \A )$ be the vector space of bounded operators on ${L^{2} ( \R^{d} )}^{d}$ generated by such operators. Then, \eqref{e14} says exactly that
\begin{equation} \label{e14}
P_{h} = d_{\phi , h}^{*} ( \widehat{Q} + \mmm ) d_{\phi , h} ,
\end{equation}
for any $\mmm \in \M ( \A )$.

\medskip
{\bf Step 1.} We first remove the terms of order $1$ near the origin. More precisely, we show that there exists $\mmm^{0} \in \M ( \A )$ such that
\begin{equation} \label{e31}
\breve{Q}^{0} : = \widehat{Q}^{0}+\mmm^{0} = \Op ( \breve{q}^{0}) + \Psi^{0} ( h \A ) ,
\end{equation}
where $\breve{q}^{0} \in S^{0} ( \A )$ satisfies, near $( 0 , 0 ) \in T^{*} \R^{d}$,
\begin{equation} \label{e32}
\breve{q}^{0} ( x , \xi ) = w_{0} ( x , \xi ) \big( \Id + \rho ( x , \xi ) \big) ,
\end{equation}
with $\rho \in S ( \A )$ such that
\begin{itemize}
\item[$\bullet$] $\rho ( x , \xi ) = \ooo ( \vert ( x , \xi ) \vert )$ under the assumption (A2).
\item[$\bullet$] $\rho ( x , \xi ) = \ooo ( \vert ( x , \xi ) \vert^2 )$ under the assumption (A2').
\end{itemize}

From \eqref{e13}, we have
\begin{equation*}
p_{0} ( x , \xi ) = \sum_{j , k = 1}^{d} \widehat{q}_{0 ; j , k} ( x , \xi ) \big( \xi_{j} + i \partial_{j} \phi (x) \big) \big( \xi_{k} - i \partial_{k} \phi (x) \big) ,
\end{equation*}
where $\widehat{q}_{0} = ( \widehat{q}_{0 ; j , k})_{j , k}$ denotes the principal symbol of $\widehat{q}$. Expanding $\widehat{q}_{0}$ near the origin, we get
\begin{equation*}
\widehat{q}_{0} ( x , \xi ) = \widehat{q}_{0} ( 0 , 0 ) + \nu ( x , \xi ) ,
\end{equation*}
with $\nu ( x , \xi ) = \ooo ( \vert ( x , \xi ) \vert )$. Then, we deduce
\begin{equation} \label{e16}
p_{0} ( x , \xi ) = \sum_{j , k = 1}^{d} \big( \widehat{q}_{0 ; j , k} ( 0 , 0 ) + \nu_{j , k} ( x , \xi ) \big) \big( \xi_{j} + i \partial_{j} \phi (x) \big) \big( \xi_{k} - i \partial_{k} \phi (x) \big).
\end{equation}

Identifying \eqref{e15} and \eqref{e16}, we obtain $\widehat{q}_{0 ; j , k} ( 0 , 0 ) = \delta_{j , k}$, which establishes \eqref{e31}--\eqref{e32} under the assumption (A2).

Suppose now that (A2') is satisfied. Identifying \eqref{e15} and \eqref{e16} as before, we obtain
\begin{equation} \label{e17}
\sum_{j , k = 1}^{d} \nu_{j , k} ( x , \xi ) \big( \xi_{j} + i \partial_{j} \phi (x) \big) \big( \xi_{k} - i \partial_{k} \phi (x) \big) = \ooo ( \vert ( x , \xi ) \vert^{4} ) .
\end{equation}
Denoting $A : = \hess ( \phi ) (0)$, we have $\partial_{j} \phi ( x ) = ( A x )_{j} + \ooo ( x^{2} )$. Then, \eqref{e17} becomes
\begin{equation} \label{e18}
\sum_{j , k = 1}^{d} \nu_{j , k} ( x , \xi ) \big( \xi_{j} + i ( A x )_{j} \big) \big( \xi_{k} - i ( A x )_{k} \big) = \ooo ( \vert ( x , \xi ) \vert^{4} ) .
\end{equation}
Let us introduce the new variables $\eta = \xi + i A x$ and $\overline{\eta} = \xi - i A x$. Thus, \eqref{e18} reads
\begin{equation} \label{e19}
\sum_{j , k = 1}^{d} \nu_{j , k} ( x , \xi ) \eta_{j} \overline{\eta}_{k} = \ooo ( \vert ( x , \xi ) \vert^{4} ) = \ooo ( \vert ( \eta , \overline{\eta} ) \vert^{4} ) .
\end{equation}
On the other hand, since $A$ is invertible, there exist some complex numbers $\alpha_{j , k}^{n} , \widetilde{\alpha}_{j , k}^{n}$, for $j , k , n = 1 , \ldots , d$, such that
\begin{equation}\label{e30}
\nu_{j , k} ( x , \xi ) = \sum_{n = 1}^{d} ( \alpha_{j , k}^{n} \overline{\eta}_{n} + \widetilde{\alpha}_{j , k}^{n} \eta_{n} ) + \ooo ( \vert ( \eta , \overline{\eta} ) \vert^{2} ) .
\end{equation}
Combined with \eqref{e19}, this  yields $\sum_{j , k , n = 1}^{d} ( \alpha_{j , k}^{n} \overline{\eta}_{n} + \widetilde{\alpha}_{j , k}^{n} \eta_{n} ) \eta_{j} \overline{\eta}_{k} = \ooo ( \vert ( \eta , \overline{\eta} ) \vert^{4} )$ and since the left hand side is a polynomial of degree $3$ in $( \eta , \overline{\eta} )$, it follows that
\begin{equation}
\sum_{j , k , n = 1}^{d} \big( \alpha_{j , k}^{n} \overline{\eta}_{n} + \widetilde{\alpha}_{j , k}^{n} \eta_{n} \big) \eta_{j} \overline{\eta}_{k} = 0 ,
\end{equation}
for any $\eta \in \C^{d}$. Hence, uniqueness of coefficients of polynomials of $( \eta , \overline{\eta} )$ implies
\begin{equation} \label{e20}
\forall j , k  , n \in \{ 1 , \ldots , d \} , \qquad \alpha_{j , k}^{n} + \alpha_{j , n}^{k} = 0 .
\end{equation}
In particular, $\alpha_{j , k}^{k} = 0$. On the other hand, $\widetilde{\alpha}_{j , k}^{n} = \overline{\alpha_{k , j}^{n}}$ for all $k , j , n$ since $\widehat{Q}$ is selfadjoint.

Now, we define
\begin{equation*}
\breve{Q}^{0} : = \widehat{Q}^{0} + \mmm^{0} \qquad \text{with} \qquad \mmm^{0} : = \sum_{j_{0} , k_{0} = 1}^{d} \sum_{n = k_{0} + 1}^{d} \alpha_{j_{0} , k_{0}}^{n} \B^{0} ( j_{0} , k_{0} , n ; w_{0} ) .
\end{equation*}
It follows from symbolic calculus that $\breve{Q}^{0} = \Op ( \breve{q}^{0} )$ with $\breve{q}^{0} \in S^{0} ( \A )$ given by
\begin{align*}
\breve{q}^{0}_{j , k} = w_{0} \Big( \widehat{q}_{0 ; j , k} - \sum_{n > k} & \alpha_{j , k}^{n} \big( \xi_{n} - i \partial_{n} \phi (x) \big) + \sum_{n < k} \alpha_{j , n}^{k} \big( \xi_{n} - i \partial_{n} \phi (x) \big)   \\
&- \sum_{n > j} \overline{\alpha_{k , j}^{n}} \big( \xi_{n} + i \partial_{n} \phi (x) \big) + \sum_{n < j} \overline{\alpha_{k , n}^{j}} \big( \xi_{n} + i \partial_{n} \phi (x) \big) \Big) + S^{0} (h\A) ,
\end{align*}
for any $j , k$. Moreover, from \eqref{e20} and $\xi_{n} + i \partial_{n} \phi (x) = \eta_{n} + \ooo ( \vert x \vert^{2} )$ near $( 0 , 0 )$, we get
\begin{equation*}
\breve{q}^{0}_{j , k} = w_{0} \Big( \widehat{q}_{0 ; j , k} - \sum_{n = 1}^{d} \alpha_{j , k}^{n} \overline{\eta}_{n} - \sum_{n = 1}^{d} \overline{\alpha_{k , j}^{n}} \eta_{n}  + \widetilde{\rho}_{j , k}\Big) + S^{0} ( h \A ) ,
\end{equation*}
with $\widetilde{\rho} \in S^{0} ( \A )$ such that $\widetilde{\rho} = \ooo ( \vert ( x , \xi ) \vert^{2} )$ near the origin. Using the identity $\widehat{q}_{0 ; j , k} = \delta_{j , k} + \nu_{j , k}$ together with \eqref{e30}, we get
\begin{equation*}
\breve{q}^{0}_{j , k} = w_{0} \big( \delta_{j , k} + \rho_{j , k} \big) + S^{0} ( h \A ) ,
\end{equation*}
with $\rho \in S^{0} ( \A )$ such that $\rho = \ooo ( \vert ( x , \xi ) \vert^{2} )$ near the origin. This implies \eqref{e31}--\eqref{e32} under the assumption (A2') and achieves the proof of Step 1.

\medskip
{\bf Step 2.} We now remove the antidiagonal terms away from the origin. More precisely, we show that there exist some $\mmm^{\ell} \in \M ( \A )$ and some diagonal symbols $\widetilde{q}^{\ell} \in S^{0} ( \A )$ such that
\begin{equation} \label{e21}
\widetilde{Q}^{\ell} : = \widehat{Q}^{\ell} + \mmm^{\ell} = \Op ( w_{\ell} \widetilde{q}^{\ell} ) + \Psi^{0} ( h \A ) ,
\end{equation}
for any $\ell \in \{ 1 , \ldots , d \}$.

For $j_{0} , k_{0} , \ell \in \{ 1 , \ldots , d \}$ with $j_{0} \neq k_{0}$, let $\beta_{j_{0} , k_{0},\ell}$ be defined by
\begin{equation*}
\beta_{j_{0} , k_{0} , \ell} ( x , \xi ) : = \frac{w_{\ell} ( x , \xi ) \widehat{q}_{j_{0} , k_{0}} ( x , \xi )}{\vert \xi_{\ell} \vert^{2} + \vert \partial_{\ell} \phi(x) \vert^{2}} .
\end{equation*}
Thanks to the support properties of $w_{\ell}$, we have $\beta_{j_{0} , k_{0} , \ell} \in S^{0} ( \< \xi_{j_{0}} \>^{- 1} \< \xi_{k_{0}} \>^{- 1} \< \xi_{\ell} \>^{- 2} )$ so that $\B^{\infty} ( j_{0} , k_{0} , \ell ; \beta_{j_{0} , k_{0} , \ell} )$ belongs to $\M ( \A )$. Defining
\begin{equation*}
\mmm^{\ell} : = \sum_{j_{0} \neq k_{0}} \B^{\infty} ( j_{0} , k_{0} , \ell ; \beta_{j_{0} , k_{0} , \ell} ) ,
\end{equation*}
the pseudodifferential calculus gives
\begin{equation*}
( d_{\phi , h}^{\ell} )^{*} \Op ( \beta_{j_{0} , k_{0} , \ell} ) d_{\phi , h}^{\ell} =  \Op ( w_{\ell} \widehat{q}_{j_{0} , k_{0}} ) + \Psi^{0} \big( h \< \xi_{j_{0}} \>^{- 1} \< \xi_{k_{0}} \>^{- 1} \big) ,
\end{equation*}
which implies
\begin{equation*}
\widehat{Q}^{\ell} + \mmm^{\ell} = \Op ( w_{\ell} \widehat{q} ) + \mmm^{\ell} = \Op ( w_{\ell} \widetilde{q}^{\ell} ) + \Psi^{0} (h \A ) ,
\end{equation*}
with $\widetilde{q}^{\ell} \in S^{0} ( \A )$ diagonal. This proves \eqref{e21}.

\medskip
{\bf Step 3.} Let us now prove that we can modify each $\widetilde{Q}^{\ell}$ in order that its diagonal coefficients are suitably bounded from below. More precisely, we claim that there exist $c > 0$ and $\widetilde{\mmm}^{\ell} \in \M ( \A )$ such that
\begin{equation} \label{e22}
\breve{Q}^{\ell} : = \widetilde{Q}^{\ell} + \widetilde{\mmm}^{\ell} = \Op ( \breve{q}^{\ell} ) + \Psi^{0} ( h \A ) ,
\end{equation}
with $\breve{q}^{\ell}$ diagonal and $\breve{q}^{\ell}_{i_{0} , i_{0}} ( x , \xi ) \geq c w_{\ell} ( x , \xi ) \< \xi_{i_{0}} \>^{- 2}$ for all $i_{0} \in \{ 1 , \ldots , d \}$.

For $\ell , i_{0} \in \{ 1 , \ldots , d \}$, let $\beta_{i_{0} , \ell}$ be defined by
\begin{equation*}
\beta_{i_{0} , \ell} ( x , \xi ) : = \frac{w_{\ell} ( x , \xi )}{2 \big( \vert \xi_{\ell} \vert^{2} + \vert \partial_{\ell} \phi (x) \vert^{2} \big)} \bigg( \widetilde{q}^{\ell}_{i_{0} , i_{0}} ( x , \xi ) - \frac{\gamma}{1 + \vert \xi_{i_{0}} \vert^{2} + \vert \partial_{i_{0}} \phi ( x ) \vert^{2}} \bigg) ,
\end{equation*}
where $\gamma > 0$ will be specified later. The symbol $\beta_{i_{0} , \ell}$ belongs to $S^{0} ( \< \xi_{i_{0}} \>^{- 2} \< \xi_{\ell} \>^{- 2} )$ so that $\B^{\infty} ( i_{0} , i_{0} , \ell ; \beta_{i_{0} , \ell} ) \in \M ( \A )$. Defining
\begin{equation*}
\widetilde{\mmm}^{\ell} : = \sum_{i_{0} \neq \ell} \B^{\infty} ( i_{0} , i_{0} , \ell ; \beta_{i_{0} , \ell} ) ,
\end{equation*}
the symbolic calculus shows that $\widetilde{Q}^{\ell} + \widetilde{\mmm}^{\ell} = \Op ( \breve{q}^{\ell} ) + \Psi^{0} ( h \A )$ with $\breve{q}^{\ell}$ diagonal and
\begin{equation} \label{e23}
\forall i_{0} \neq \ell , \qquad \breve{q}^{\ell}_{i_{0} , i_{0}} ( x , \xi ) = \frac{\gamma w_{\ell} ( x , \xi )}{1 + \vert \xi_{i_{0}} \vert^{2} + \vert \partial_{i_{0}} \phi (x) \vert^{2}} .
\end{equation}

It remains to prove that we can choose $\gamma > 0$ above so that $\breve{q}^{\ell}_{\ell , \ell} ( x , \xi ) \geq c w_{\ell} ( x , \xi ) \< \xi_{\ell} \>^{- 2}$. Thanks to assumption (A1), there exists $\alpha > 0$ such that
\begin{equation} \label{e24}
\forall ( x , \xi ) \in \supp( w_{\ell} ) , \qquad p_{0} ( x , \xi ) \geq \alpha .
\end{equation}
On the other hand, a simple commutator computation shows that $\Op ( w_{\ell} ) P_{h} = d_{\phi , h}^{*} \widehat{Q}^{\ell} d_{\phi , h} + \Psi^{0} ( h )$. Combined with \eqref{e14}, \eqref{e21} and the definition of $\breve{q}^{\ell}$, this yields to
\begin{equation*}
\Op ( w_{\ell} ) P_{h} = d_{\phi , h}^{*} \widetilde{Q}^{\ell} d_{\phi , h} + \Psi^{0} ( h ) = d_{\phi , h}^{*} \Op ( \breve{q}^{\ell} ) d_{\phi , h} + \Psi^{0} ( h ) ,
\end{equation*}
and then
\begin{equation*}
( w_{\ell} p_{0} ) ( x , \xi ) = \sum_{i_{0} = 1}^{d} \breve{q}^{\ell}_{i_{0} , i_{0}} ( x , \xi ) \big( \vert \xi_{i_{0}} \vert^{2} + \vert \partial_{i_{0}} \phi (x) \vert^{2} \big) + S^{0} (h) .
\end{equation*}
Using now \eqref{e23}, we get
\begin{equation*}
( w_{\ell} p_{0} ) ( x , \xi ) = \breve{q}^{\ell}_{\ell , \ell} ( x , \xi ) \big( \vert \xi_{\ell} \vert^{2} + \vert \partial_{\ell} \phi(x) \vert^{2} \big) + \gamma ( d - 1 ) w_{\ell} ( x , \xi ) + S^{0} (h) .
\end{equation*}
Combining this relation with \eqref{e24} and choosing $\gamma = \frac{\alpha}{2 d}$, we obtain
\begin{equation} \label{e29}
\breve{q}^{\ell}_{\ell , \ell} ( x , \xi ) \geq \frac{\alpha w_{\ell} ( x,\xi )}{2 \big( \vert \xi_{\ell} \vert^{2} + \vert \partial_{\ell} \phi ( x ) \vert^{2} \big)} + S^{0} \big( h \< \xi_{\ell} \>^{- 2} \big) .
\end{equation}
Thus, $\breve{q}^{\ell}_{\ell , \ell}$ satisfies the required lower bound and \eqref{e22} follows.

\medskip
{\bf Step 4.} Eventually, we take the square root of the modified operator. Let us define
\begin{equation}
\breve{Q} : = \sum_{\ell = 0}^{d} \breve{Q}^{\ell} \in \Psi^{0} ( \A ) ,
\end{equation}
with $\breve{Q}^{\ell}$ defined above. Thanks to \eqref{e14}, we have  $P_{h} = ( d_{\phi , h} )^{*} \widehat{Q} d_{\phi , h} = ( d_{\phi , h} )^{*} \breve{Q} d_{\phi , h}$ and it follows from the preceding constructions that the principal symbol $\breve{q}$ of $\breve{Q}$ satisfies
\begin{equation*}
\breve{q} ( x , \xi ) \geq w_{0} ( x , \xi ) \big( \Id + \ooo ( \vert ( x , \xi ) \vert ) \big) + c \sum_{\ell \geq 1} w_{\ell} ( x , \xi ) \diag \big( \<\xi_{j} \>^{- 2} \big) .
\end{equation*}
Shrinking $c > 0$ and the support of $w_{0}$ if necessary, it follows that
\begin{equation*}
\breve{q} ( x , \xi ) \geq c \diag \big( \<\xi_{j} \>^{-2} \big) .
\end{equation*}
Denoting $E = \Op ( \D ) \breve{Q} \Op ( \D )$ and $e \in S^{0} (1)$ the symbol of $E$, the pseudodifferential calculus gives $e(x,\xi;h)=e_0(x,\xi)+S^0(h)$ with
\begin{equation} \label{e25}
e_0 ( x , \xi ) \geq c \diag \big( \<\xi_{j} \> \<\xi_{j} \>^{- 2} \<\xi_{j} \> \big)=c\Id ,
\end{equation}
so that
for $h>0$ small enough, $e (x,\xi)\geq \frac c 2 \Id$. Hence, we can adapt the proof of Theorem 4.8 of \cite{HeNi05_01} to our semiclassical setting 
to get that $F : = E^{1 / 2}$ belongs to $\Psi^{0} (1)$ and that
$F^{- 1} \in \Psi^{0} (1)$. 
Then, $\breve{Q} = Q^{*} Q$ with $Q : = F \Op ( \Xi^{- 1} )$ and by construction $Q \in \Psi^{0} ( \Xi \A)$. 

In addition, as in Theorem 4.8 of \cite{HeNi05_01}, we can show that  $F=\Op(e_0^{1/2})+\Psi^0(h)$ so that
$Q=\Op(q_0)+\Psi^0(h\Xi\A)$ with $q_0=e_0^{1/2}\Xi^{- 1}$.
If moreover $\hat q $ admits a classical expansion, then
$\breve q\in S^0_{cl}(\A)$ and the same argument shows that both $e$ and $q$ admit classical expansion.

Let us now study $q_0$ near $(\u,0)$.
Since for $(x,\xi)$ close to   $(\u,0)$ we have $\Xi=\Id+\ooo(\vert\xi\vert^2)$ and $\breve q_0=\Id+\rho(x,\xi)$,   then
$$e_0(x,\xi)=\Xi\breve q_0\Xi=\Id+\rho(x,\xi)+\ooo(|\xi|^2)$$
and we get easily 
$q_0=e_0^{1/2}\Xi^{- 1}=\Id+\ooo(|\xi|^2+\rho(x,\xi))$ which proves \eqref{e115}  and \eqref{e1115}.
\end{proof}

\section{Quasimodes on $k$-forms and first exponential type eigenvalue estimates} \label{s4}

\Subsection{Pseudodifferential Hodge--Witten Laplacian on the $0$-forms}

This part is devoted to the rough asymptotic of the small eigenvalues of $P_h$ and to the construction of associated quasimodes. From Theorem \ref{e4}, this operator has the following expression
\begin{equation} \label{b34}
P_h = a_{h} d_{\phi , h}^{*} G d_{\phi , h} a_{h} ,
\end{equation}
where $G$ is the matrix of pseudodifferential operators
\begin{equation*}
G = \big( \Op ( g_{j , k} ) \big)_{j , k} : = Q^{*} Q = \Op(\Xi)^{-1} E^{*} E \Op(\Xi)^{-1} .
\end{equation*}
Using Corollary \ref{e8} and that $G$ is selfadjoint, we remark that $g_{j , k} \in S^{0} ( \< \xi_{j} \>^{- 1} \< \xi_{k} \>^{- 1} )$ and $g_{j , k} = \overline{g_{k , j}}$. Thus, $P_h$ can be viewed as a Hodge--Witten Laplacian on $0$-forms (or a Laplace--Beltrami operator) with the pseudodifferential metric $G^{-1}$. In the following, we will then use the notation $P^{(0)} : = P_h$.

Since $a_{h} ( \u ) = 1+\ooo(h)$ and $g ( \u , 0 )= \beta_{d} \Id + \ooo ( h )$ for all the critical points $\u \in \uuu$, it is natural to consider
the operator with the coefficients $a_{h}$ and $G$ frozen at $1$ and $\beta_d \Id$. For that, let $\Omega^{p} ( \R^{d} )$, $p = 1 , \ldots , d$,
be the space of the $C^{\infty}$ $p$-forms on $\R^{d}$. We then define
\begin{equation} \label{d27}
P^{W} = d_{\phi , h}^{*} d_{\phi , h} + d_{\phi , h} d_{\phi , h}^{*} ,
\end{equation}
the semiclassical Witten Laplacian on the De Rham complex, and $P^{W , (p)}$ its restriction to the $p$-forms.
This operator has been intensively studied (see e.g. \cite{HeSj85_01}, \cite{CyFrKiSi87_01}, \cite{BoEcGaKl04_01}, \cite{HeKlNi04_01}, \cite{BoGaKl05_01}, \ldots), and a lot is known concerning its spectral properties.
In particular, from Lemma 1.6 and Proposition 1.7 of Helffer-Sj\"{o}strand \cite{HeSj85_01}, we know that there are $n_{0}$ exponentially small (real non-negative) eigenvalues,
and that the other are above $h / C$. 

From \cite{HeKlNi04_01} and \cite{HeHiSj11_01}, we have good normalized quasimodes  for $P^{W , (0)}$ associated to each minima of $\phi$. For $k \in \{1, \ldots , n_0 \}$, they are given by
\begin{equation*}
f_k^{W,(0)}(x)=  \chi_{k , \varepsilon } (x) b^{(0)}_k(h) e^{- (\phi (x)-\phi (\m_k))/h},
\end{equation*}
where $ b^{(0)}_k(h) = (\pi h)^{-d/4} \det(\hess \phi(\m_k))^{1/4} (1 + \ooo(h) )$, 
and  where $\chi_{k , \varepsilon}$ are cutoff functions localized in sufficiently large areas containing  $\m_k \in \uuu^{(0)}$.
In fact we need   
large support (associated to level sets of $\phi$) and properties for the cutoff functions $\chi_{k , \varepsilon}$, so that the refined analysis of the next section can be done. We postpone to Appendix \ref{d1} the construction of the cutoff functions, the definition of $\varepsilon > 0$, refined estimates on this family $( f_k^{W,(0)} )_{k}$, and in particular the fact that it is a quasi-orthonormal free family of functions,  following closely \cite{HeKlNi04_01} and \cite{HeHiSj11_01}.

We now define the quasimodes associated to $P^{(0)}$ in the following way:
\begin{equation} \label{b35}
f_k^{(0)}(x) : = a_h(x)^{-1} f_k^{W,(0)}(x) = a_h(x)^{-1} b^{(0)}_k(h) \chi_{k , \varepsilon} (x) e^{- (\phi (x)-\phi (\m_k)) / h} ,
\end{equation}
for $1 \leq k \leq n_{0}$. We have then

\begin{lemma}\sl \label{d2}
The system $( f_k^{(0)} )_{k}$ is free and there exist $\alpha > 0$ independent of $\varepsilon$ such that
\begin{equation*}
\big\< f_k^{(0)} , f_{k'}^{(0)} \big\> = \delta_{k , k'} + \ooo (h) \qquad \text{and } \qquad P^{(0)} f_k^{(0)}  =  \ooo (e^{-\alpha/h}).
\end{equation*}
\end{lemma}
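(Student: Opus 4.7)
The plan is to exploit the factorization $P^{(0)} = a_{h} d_{\phi , h}^{*} G d_{\phi , h} a_{h}$ from Corollary \ref{e8} together with the identity $a_{h} f_{k}^{(0)} = f_{k}^{W,(0)}$ in order to transfer the construction from the Witten Laplacian $P^{W,(0)}$, for which the relevant bounds on the $f_{k}^{W,(0)}$ are recalled in Appendix \ref{d1}, to the operator $P^{(0)}$.

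For the quasimode estimate, the cancellation of the outer $a_{h}$ gives
\begin{equation*}
P^{(0)} f_{k}^{(0)} = a_{h} d_{\phi , h}^{*} G \bigl( d_{\phi , h} f_{k}^{W,(0)} \bigr) .
\end{equation*}
Using $d_{\phi , h} \bigl( u \, e^{- ( \phi - \phi ( \m_{k} ) ) / h} \bigr) = h ( d u ) e^{- ( \phi - \phi ( \m_{k} ) ) / h}$, we compute
\begin{equation*}
d_{\phi , h} f_{k}^{W,(0)} = h \, b_{k}^{(0)} (h) \, ( d \chi_{k , \varepsilon} ) \, e^{- ( \phi - \phi ( \m_{k} ) ) / h} .
\end{equation*}
The cutoff $\chi_{k , \varepsilon}$ from Appendix \ref{d1} will be built so that $\supp ( d \chi_{k , \varepsilon} ) \subset \{ \phi - \phi ( \m_{k} ) \geq S_{k} - C \varepsilon \}$, which for $\varepsilon$ small stays above some threshold $\alpha_{0} > 0$ depending only on the topology of the sublevel sets of $\phi$, hence independent of $\varepsilon$. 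Consequently $d_{\phi , h} f_{k}^{W,(0)}$ and all its semiclassical derivatives are $\ooo ( e^{- \alpha_{0} / h} )$ in $L^{2}$. Since $G = Q^{*} Q \in \Psi^{0} ( \A )$ is bounded on semiclassical Sobolev spaces, and since $d_{\phi , h}^{*} = - h d^{*} + \iota_{\nabla \phi}$ is a first-order semiclassical operator with globally bounded coefficients (using Hypothesis \ref{h1} and Lemma \ref{a5}), applying $a_{h} d_{\phi , h}^{*} G$ only costs finitely many powers of $h^{-1}$ and thus preserves the exponential smallness with any $\alpha < \alpha_{0}$.

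For the scalar products, I would split
\begin{equation*}
\bigl\< f_{k}^{(0)} , f_{k'}^{(0)} \bigr\> = \bigl\< f_{k}^{W,(0)} , f_{k'}^{W,(0)} \bigr\> + \bigl\< ( a_{h}^{-2} - 1 ) f_{k}^{W,(0)} , f_{k'}^{W,(0)} \bigr\> .
\end{equation*}
The first term equals $\delta_{k , k'} + \ooo (h)$ by the quasi-orthonormality of the Witten quasimodes recalled in Appendix \ref{d1}. For the second, Lemma \ref{a5} together with Lemma \ref{a7} gives $a_{0}^{- 2} (x) - 1 = G ( i \nabla \phi (x) ) - 1 = \ooo ( \vert \nabla \phi (x) \vert^{2} ) = \ooo ( \vert x - \m_{k} \vert^{2} )$ near $\m_{k}$, and $a_{h}^{-2} - a_{0}^{-2} = \ooo (h)$ uniformly. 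Combined with the Laplace method applied to the density $e^{- 2 ( \phi - \phi ( \m_{k} ) ) / h}$, which concentrates $\vert f_{k}^{W,(0)} \vert^{2}$ on the $\sqrt{h}$-neighborhood of $\m_{k}$ where $\vert x - \m_{k} \vert^{2} = \ooo (h)$, the diagonal contribution is $\ooo (h)$; the off-diagonal contribution ($k \neq k'$) is even exponentially small thanks to the spatial separation of the Witten quasimodes. Freeness follows from the Gram matrix being $\Id + \ooo (h)$, hence invertible for $h$ small.

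The main obstacle is the $\varepsilon$-independence of $\alpha$: this is not produced by the above manipulations \emph{per se}, but is inherited from the geometric construction of the cutoffs $\chi_{k , \varepsilon}$ in Appendix \ref{d1}, which must be chosen so that the transition region of $\chi_{k , \varepsilon}$ sits strictly below the first ``escape'' saddle value from $\m_{k}$. Once this is ensured, the argument above produces a uniform $\alpha < \alpha_{0}$.
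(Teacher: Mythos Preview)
Your proof is correct and follows essentially the same route as the paper. Both arguments split the inner product as $\langle a_h^{-2} f_k^{W,(0)}, f_{k'}^{W,(0)}\rangle = \delta_{k,k'} + \langle (a_h^{-2}-1) f_k^{W,(0)}, f_{k'}^{W,(0)}\rangle$ and use $a_h^{-2}-1 = \ooo(h + |x-\m_k|^2)$ near $\m_k$ together with Laplace-type concentration; and both reduce the quasimode estimate to the exponential smallness of $d_{\phi,h} f_k^{W,(0)}$ (your explicit computation is the content of Lemma \ref{e7}). One cosmetic improvement: instead of treating $a_h d_{\phi,h}^* G$ as a first-order operator that ``costs powers of $h^{-1}$'', the paper writes $P^{(0)} f_k^{(0)} = L_\phi^* Q\, d_{\phi,h} f_k^{W,(0)}$ and observes that $L_\phi^* = a_h d_{\phi,h}^* Q^*$ and $Q$ are both \emph{bounded} pseudodifferential operators (the gain $\langle\xi_j\rangle^{-1}$ in the symbol class of $Q^*$ compensates the loss from $d_{\phi,h}^*$), so no powers of $h$ are lost at all.
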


\begin{remark}\sl Note that, for this result to be true, it would have been sufficient to take truncation functions with smaller support (say in a small neighborhood of each minimum $\m_k$). We emphasize again that the more complicated construction for the quasimodes is justified by later purpose.
\end{remark}

\begin{proof}
First, observe that 
$$\<f_k^{(0)},f_{k'}^{(0)}\>=\<a_h^{-2}f_k^{W,(0)},f_{k'}^{W,(0)}\>=\delta_{kk'}+\<(a_h^{-2}-1)f_k^{W,(0)},f_{k'}^{W,(0)}\>.$$
Moreover, since near any minimum $\m_k$, $a_h^{-2}-1=\ooo(h+\vert x-\m_k\vert^2)$ and $\phi(x)-\phi(\m_k)$ is quadratic, then
\begin{equation} \label{z9}
\big\Vert (a_h^{-2}-1)f_k^{W,(0)} \big\Vert=\ooo(h) ,
\end{equation}
which proves the first statement.
For the last statement, this is enough to notice that
\begin{equation*}
P^{(0)} f_k^{(0)} = L_\phi^* Q d_{\phi , h} f_k^{W,(0)} 
\end{equation*}
and apply Lemma \ref{e7}.
\end{proof}

We prove now a first rough spectral result on $P^{(0)}$, using the preceding lemma.

\begin{proposition}\sl \label{b32}
The operator $P^{(0)}$ has exactly $n_{0}$ exponentially small (real non-negative) eigenvalues, and the remaining part of its spectrum is in $[ \varepsilon_0 h, + \infty [$, for some $\varepsilon_0 > 0$.
\end{proposition}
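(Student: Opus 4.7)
The plan is to prove the existence of $n_0$ exponentially small (non-negative) eigenvalues by a min-max argument applied to the quasimode space from Lemma \ref{d2}, and to establish the spectral gap above them via an IMS-type partition of unity combined with ellipticity far from $\uuu$ and harmonic approximation near it. Non-negativity of $P^{(0)}$ is immediate from the factorization $P^{(0)} = L_\phi^* L_\phi$ given by Corollary \ref{e8}.

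For the upper bound, the space $V_h := \operatorname{span} \{ f_k^{(0)} , \ k=1,\ldots, n_0 \}$ has dimension $n_0$ by Lemma \ref{d2}, whose Gram matrix equals $\Id + \ooo(h)$. For $u = \sum_k c_k f_k^{(0)} \in V_h$, Cauchy-Schwarz together with Lemma \ref{d2} yield
\[ \langle P^{(0)} u , u \rangle \leq \Vert P^{(0)} u \Vert \Vert u \Vert \leq \Big( \sum_k \vert c_k \vert \Vert P^{(0)} f_k^{(0)} \Vert \Big) \Vert u \Vert \leq C e^{- \alpha / h} \Vert u \Vert^{2} , \]
so min-max gives $\lambda_{n_0}(P^{(0)}) \leq C e^{- \alpha / h}$ and $P^{(0)}$ has at least $n_0$ exponentially small eigenvalues.

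For the lower bound, it suffices by min-max to prove $\langle P^{(0)} u , u \rangle \geq \varepsilon_0 h \Vert u \Vert^{2}$ for all $u \perp V_h$. I would introduce a quadratic partition of unity $\chi_{\mathrm{cr}}^2 + \chi_{\mathrm{el}}^2 = 1$ on $\R^d$, with $\chi_{\mathrm{cr}}$ supported in a small neighborhood of $\uuu$ and $\chi_{\mathrm{el}}$ supported where $\vert \nabla \phi (x) \vert \geq c > 0$, and apply the pseudodifferential IMS identity
\[ \langle P^{(0)} u , u \rangle = \langle P^{(0)} \chi_{\mathrm{cr}} u , \chi_{\mathrm{cr}} u \rangle + \langle P^{(0)} \chi_{\mathrm{el}} u , \chi_{\mathrm{el}} u \rangle + \ooo(h^2) \Vert u \Vert^{2} , \]
the remainder being of this size since $[\chi_\bullet , [P^{(0)} , \chi_\bullet]] \in h^2 \Psi^0$ for scalar $\chi_\bullet$ and $P^{(0)} \in \Psi^0$. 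On $\supp \chi_{\mathrm{el}}$, the principal symbol $p_0(x,\xi) = 1 - G(i \nabla \phi(x))^{-1} G(\xi)$ is bounded below by a positive constant, uniformly in $\xi$, by Lemma \ref{a7} items ii-iii (either $G(\xi) < 1$ and $G(i \nabla \phi) > 1$ or $G(\xi) \leq 0$), so a standard G{\aa}rding argument localized in $x$ gives $\langle P^{(0)} \chi_{\mathrm{el}} u , \chi_{\mathrm{el}} u \rangle \geq c \Vert \chi_{\mathrm{el}} u \Vert^2 - \ooo(h) \Vert u \Vert^2$, a bound much stronger than $\varepsilon_0 h \Vert \chi_{\mathrm{el}} u \Vert^2$. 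Near each critical point $\u \in \uuu$, Taylor expanding $G$ at $\xi = 0$ and $\nabla\phi$ at $x = \u$ gives
\[ p_0(x,\xi) = \beta_d \bigl( \vert \xi \vert^2 + \vert \phi''(\u)(x-\u) \vert^2 \bigr) + \ooo\bigl(\vert (x-\u, \xi) \vert^3 \bigr), \]
so that $P^{(0)}$ is locally modeled by $\beta_d$ times a semiclassical Witten harmonic oscillator. Standard harmonic approximation (as in \cite{HeSj85_01,CyFrKiSi87_01,HeKlNi04_01}) says that this model has a unique exponentially small eigenvalue with ground state proportional to $e^{-(\phi(x)-\phi(\u))/h}$ at each minimum, lowest eigenvalue of size $h$ at each saddle (no exponentially small state), and spectral gap of size $h$ in both cases. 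Since $u \perp V_h$ and $f_k^{(0)} = a_h^{-1} f_k^{W,(0)}$ with $a_h = 1 + \ooo(h)$ near critical points, $\chi_{\mathrm{cr}} u$ is orthogonal modulo an $\ooo(h) \Vert u \Vert$ error to the ground states of these local models, yielding $\langle P^{(0)} \chi_{\mathrm{cr}} u , \chi_{\mathrm{cr}} u \rangle \gtrsim h \Vert \chi_{\mathrm{cr}} u \Vert^2 - \ooo(h^{1+\delta}) \Vert u \Vert^2$. Summing everything, $\langle P^{(0)} u , u \rangle \geq \varepsilon_0 h \Vert u \Vert^{2}$.

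The hard part will be the harmonic-approximation step near each critical point: one must justify that the cubic remainder in $p_0$ and the subprincipal $h$-corrections do not spoil the $h$-size spectral gap of the model harmonic oscillator, and one must propagate the orthogonality $u \perp V_h$ to accurate near-orthogonality of $\chi_{\mathrm{cr}} u$ to the ground states of the local models at minima. This can be carried out following the strategy of \cite{HeSj85_01,HeKlNi04_01}, using a finer partition of unity separating individual critical points, conjugation by exponential weights of Agmon type to absorb cross terms, and a careful bookkeeping of the various error terms ($h$, $\vert x - \u \vert^2$, and exponentially small) so that the $h$-lower bound survives globally after summation.
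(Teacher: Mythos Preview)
Your upper bound argument coincides with the paper's. For the lower bound, however, you take a genuinely different route: you outline the classical IMS + harmonic approximation strategy, whereas the paper bypasses this entirely by exploiting the supersymmetric factorization. Concretely, the paper writes $P^{(0)}=L_\phi^*L_\phi$ with $L_\phi=F\,\Op(\Xi^{-1})\,d_{\phi,h}\,a_h$, uses that $F^{-1}\in\Psi^0(1)$ is bounded to get $\langle P^{(0)}u,u\rangle\geq c\,\Vert\Op(\Xi^{-1})d_{\phi,h}a_hu\Vert^2$, then observes $\Op(\Xi^{-1})^2\geq c\,(P^{W,(1)}+1)^{-1}$ and applies the Witten intertwining relation $(P^{W,(1)}+1)^{-1/2}d_{\phi,h}=d_{\phi,h}(P^{W,(0)}+1)^{-1/2}$ to obtain
\[
\langle P^{(0)}u,u\rangle\geq c\,\big\langle P^{W,(0)}(P^{W,(0)}+1)^{-1}a_hu,\,a_hu\big\rangle.
\]
Taking $u$ with $a_hu\perp\vect\{f_k^{W,(0)}\}$, the known $h$-gap for $P^{W,(0)}$ finishes the argument in two lines.

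The paper explicitly acknowledges your route as viable (``It is possible to do that here (with efforts)''), so your proposal is not wrong. What the paper's approach buys is precisely the elimination of the step you flag as hard: there is no need to carry out harmonic approximation for a genuinely pseudodifferential operator, no need to control cubic remainders in $p_0$ and subprincipal corrections against an $h$-size gap, and no bookkeeping of Agmon weights or of how orthogonality to $V_h$ transfers to near-orthogonality to localized ground states. Your approach, on the other hand, is self-contained and does not rely on the factorization Theorem~\ref{e4}, so it would survive in settings where such a structure is unavailable.
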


Usually, this type of result is a consequence of an IMS formula.
It is possible to do that here (with efforts), but we prefer to give a simpler proof using what we know about $P^{W , (0)}$.
The following proof is based on the spectral theorem and the maxi-min principle.

\begin{proof}
Thanks to Proposition \ref{a47}, the spectrum of $P^{(0)}$ is discrete in $[0 , \delta ]$ and its $j$-th eigenvalue is given by
\begin{equation} \label{b24}
\sup_{\dim E = j - 1} \quad \inf_{u \in E^{\perp} , \ \Vert u \Vert = 1} \quad \big\< P^{(0)} u , u \big\> .
\end{equation}
Lemma \ref{d2} directly implies
\begin{equation*}
\big\< P^{(0)} f_k^{(0)} , f_{k'}^{ (0)} \big\>  \leq \big\Vert P^{(0)} f_k^{(0)}\big\Vert \big\Vert f_{k'}^{ (0)} \big\Vert = \ooo ( e^{- \alpha / h} ) ,
\end{equation*}
for some $\alpha>0$. Using the almost orthogonality of the $f_{k}^{(0)}$'s, \eqref{b24} and $P^{(0)} \geq 0$, we deduce that $P^{(0)}$ has at least $n_{0}$ eigenvalues which are exponentially small.

We now want to prove that the remaining part of the spectrum of $P^{(0)}$ is above $\varepsilon_0 h$ for some $\varepsilon_0>0$ small enough. For this, we set
\begin{equation*}
\eee : = \vect \big\{ f_{k}^{W , (0)} ; \ k = 1 , \ldots , n_{0} \big\} ,
\end{equation*}
and we consider $u \in a_{h}^{- 1} \eee^{\perp}$ with $\Vert u \Vert = 1$. We have again,
\begin{equation} \label{b28}
\big\< P^{(0)} u , u \big\> = \big\Vert F \Op ( \Xi^{- 1} ) d_{\phi , h} a_{h} u \big\Vert^{2} \geq \varepsilon_0 \big\Vert \Op ( \Xi^{- 1} ) d_{\phi , h} a_{h} u \big\Vert^{2} ,
\end{equation}
for some $\varepsilon_0 > 0$ independent of $h$ which may change from line to line. For the last inequality, we have used that $\Vert F^{-1} \Vert$ is uniformly bounded since $F^{-1}\in \Psi^0(1)$. On the other hand, using $0 \leq P^{W, (1)} = - h^{2} \Delta \otimes \Id + \ooo ( 1 )$, we notice that
\begin{equation*}
\Op ( \Xi^{- 1} )^{2} \geq ( - h^{2} \Delta + 1 )^{- 1} \otimes \Id \geq \varepsilon_0 \big( P^{W , (1)} + 1 \big)^{- 1} ,
\end{equation*}
for some (other) $\varepsilon_0 > 0$. We therefore get, using the classical intertwining relations
\begin{equation*}
\big( P^{W , (1)} + 1 )^{- 1 / 2} d_{\phi , h} = d_{\phi , h} \big( P^{W , (0)} + 1 )^{- 1 / 2} ,
\end{equation*}
and the fact that $P^{W , (0)} = d_{\phi , h}^{*} d_{\phi , h}$ on $0$-forms,
\begin{align}
\big\< P^{(0)} u , u \big\> &\geq \varepsilon_0 \big\Vert \big( P^{W , (1)} + 1 \big)^{- 1 / 2} d_{\phi , h} a_{h} u \big\Vert^{2} = \varepsilon_0 \big\Vert d_{\phi , h} \big( P^{W , (0)} + 1 \big)^{- 1 / 2} a_{h} u \big\Vert^{2}  \nonumber \\
&= \varepsilon_0 \big\< P^{W , (0)} \big( P^{W , (0)} + 1 \big)^{- 1} a_{h} u , a_{h} u \big\> . \label{b31}
\end{align}
Now, let $\fff$ be the eigenspace of $P^{W , (0)}$ associated to the $n_{0}$ exponentially small eigenvalues,
and let $\Pi_{\eee}$ (resp. $\Pi_{\fff}$) be the orthogonal projectors onto $\eee$ (resp. $\fff$).
Then, from Proposition 1.7 of \cite{HeSj85_01} (see also Theorem 2.4 of \cite{HeSj84_01}), we have $\Vert \Pi_{\eee} - \Pi_{\fff} \Vert = \ooo ( e^{- \alpha/ h} )$.
Moreover, since the $n_{0} + 1$-th eigenvalue of $P^{W , (0)}$ is of order $h$, the spectral theorem gives
\begin{equation*}
P^{W , (0)} \big( P^{W , (0)} + 1 \big)^{- 1} \geq \varepsilon_0 h ( 1 - \Pi_{\fff} )+\ooo(e^{-\alpha/h}) \geq \varepsilon_0 h ( 1 - \Pi_{\eee} ) + \ooo ( e^{- \alpha / h} ) .
\end{equation*}
Then, using $a_{h} u \in \eee^{\perp}$, $\Vert u \Vert = 1$ and Lemma \ref{a5}, \eqref{b31} becomes
\begin{equation*}
\big\< P^{(0)} u , u \big\> \geq \varepsilon_0 h \Vert a_{h} u \Vert + \ooo ( e^{- \alpha/ h} ) \geq \frac{c_1 \varepsilon_0}{2} h .
\end{equation*}
Eventually, this estimate and \eqref{b24} imply that $P^{(0)}$ has at most $n_{0}$ eigenvalues below $\frac{c_1 \varepsilon_0}{2} h$. 
Taking  $c_1\varepsilon_0/2$ as a new value of $\varepsilon_0$ gives the result.
\end{proof}

\Subsection{Pseudodifferential Hodge--Witten Laplacian on the $1$-forms}

Since we want to follow a supersymmetric approach to prove the main theorem of this paper, we have to build an extension $P^{(1)}$ of $P^{(0)}$ defined on $1$-forms which satisfies properties similar to those of $P^{W , (1)}$. To do this, we use the following coordinates for $\omega \in \Omega^{1} ( \R^{d} )$ and $\sigma \in \Omega^{2} ( \R^{d} )$:
\begin{equation*}
\omega = \sum_{j = 1}^{d} \omega_{j} (x) d x_{j}, \qquad \sigma = \sum_{j < k} \sigma_{j , k} (x) d x_{j} \wedge d x_{k} ,
\end{equation*}
and we extend the matrix $\sigma_{j , k}$ as a function with values in the space of the antisymmetric matrices. Recall that the exterior derivative satisfies
\begin{equation} \label{b8}
\big( d^{(1)} \omega \big)_{j , k} = \partial_{x_{j}} \omega_{k} - \partial_{x_{k}} \omega_{j} \qquad \text{and} \qquad \big( d^{* (1)} \sigma \big)_{j} = - \sum_{k} \partial_{x_{k}} \sigma_{k , j} .
\end{equation}

In the previous section, we have seen that $P^{(0)}$ can be viewed as the Hodge--Witten Laplacian on $0$-forms with a pseudodifferential metric $G^{-1}$. It is then natural to consider the corresponding Hodge--Witten Laplacian on $1$-forms. Thus, mimicking the construction in the standard case, we define
\begin{equation} \label{b1}
P^{(1)} : = Q d_{\phi , h} a_h^2 d_{\phi , h}^{*} Q^{*} + ( Q^{- 1} )^{*} d_{\phi , h}^{*} M d_{\phi , h} Q^{-1} ,
\end{equation}
where $M$ is the linear operator acting on $\Omega^{2} ( \R^{d} )$ with coefficients
\begin{equation} \label{b5}
M_{( j , k ) , ( a , b )} : = \frac{1}{2} \Op \big( a_{h}^{2} \big( g_{j , a} g_{k ,b} - g_{k , a} g_{j , b} \big) \big) .
\end{equation}
Note that $M$ is well defined on $\Omega^{2} ( \R^{d} )$ (i.e. $M \sigma$ is antisymmetric if $\sigma$ is antisymmetric) since $M_{( k , j ) , ( a , b )} = M_{( j , k ) , ( b , a )} = - M_{( j , k ) , ( a , b )}$. Furthermore, we deduce from the properties of $g_{j,k}$ that
\begin{equation} \label{b6}
M_{( j , k ) , ( a , b )} \in \Psi^{0} \big( \< \xi_{j} \>^{- 1} \< \xi_{k} \>^{- 1} \< \xi_{a} \>^{- 1} \< \xi_{b} \>^{- 1} \big) .
\end{equation}

\begin{remark}\sl
When $G^{-1}$ is a true metric (and not a matrix of pseudodifferential operators), the operator $P^{(1)}$ defined in \eqref{b1} is the usual Hodge--Witten Laplacian on $1$-forms. Our construction is then an extension to the pseudodifferential case. Generalizing these structures to the $p$-forms, it should be possible to define a Hodge--Witten Laplacian on the total De Rham complex. It could also be possible to define such an operator using only abstract geometric quantities (and not explicit formulas like \eqref{b5}).

On the other hand, a precise choice for the operator $M$ is not relevant in the present paper. Indeed, for the study of the small eigenvalues of $P^{(0)}$, only the first part (in \eqref{b1}) of $P^{(1)}$ is important (see Lemma \ref{b23} below). The second part is only used to make the operator $P^{(1)}$ elliptic. Thus, any $M$ satisfying \eqref{b6} 
and $M_{( j , k ) , ( a ,b )} \geq \varepsilon \Op ( \< \xi_{j} \>^{- 2}  \< \xi_{k} \>^{-2} ) \otimes \Id$ should probably work.
\end{remark}

We first show that $P^{(1)}$ acts diagonally (at the first order) as it is the case 
for $P^{W , (1)}$.

\begin{lemma}\sl \label{b2}
The operator $P^{(1)} \in \Psi^{0} (1)$ is selfadjoint on $\Omega^{1} ( \R^{d} )$. Moreover,
\begin{equation} \label{b17}
P^{(1)} = P^{(0)} \otimes \Id + \Psi^{0} (h) .
\end{equation}
\end{lemma}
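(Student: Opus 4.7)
The plan is to verify the three claims (self-adjointness, membership in $\Psi^0(1)$, and agreement with $P^{(0)}\otimes\Id$ modulo $\Psi^0(h)$) by a direct pseudodifferential symbol calculation on each of the two summands in \eqref{b1}. Self-adjointness is essentially built into the construction: writing $L_\phi=Qd_{\phi,h}a_h$ as in Corollary \ref{e8}, the first summand is $L_\phi L_\phi^*$, which is manifestly self-adjoint and non-negative. For the second summand, one checks from \eqref{b5} and the relation $g_{j,k}=\overline{g_{k,j}}$ that $M$ is a self-adjoint operator on $\Omega^2(\R^d)$, so that $(Q^{-1})^*d_{\phi,h}^*M d_{\phi,h}Q^{-1}$ is of the sandwich form $B^*MB$ and hence self-adjoint.

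For $P^{(1)}\in\Psi^0(1)$, the idea is weight-counting. Since $Q\in\Psi^0(\Xi\A)$ has entries $Q_{j,k}\in\Psi^0(\<\xi_k\>^{-1})$ and $d_{\phi,h}^k\in\Psi^0(\<\xi_k\>)$, each composition $Q_{j,k}d_{\phi,h}^k$ lies in $\Psi^0(1)$ and, combined with $a_h\in S^0(1)$, the first summand belongs to $\Psi^0(1)$. For the second summand, one uses \eqref{b6}, the fact that $Q^{-1}$ has principal symbol $q_0^{-1}$ of order $\langle\xi\rangle$ entrywise, and that $d_{\phi,h}^{(1)}$, $d_{\phi,h}^{*(1)}$ each add one order in $\xi$; the $M$-weight $\<\xi_j\>^{-1}\<\xi_k\>^{-1}\<\xi_a\>^{-1}\<\xi_b\>^{-1}$ then exactly compensates the four order-one factors produced by $Q^{-1}$, $d_{\phi,h}$, $d_{\phi,h}^*$ and $(Q^{-1})^*$, yielding again a symbol in $\Psi^0(1)$.

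The core of the lemma is the computation of the principal symbol. Write $\eta=i\xi+\nabla\phi(x)$, $\bar\eta=-i\xi+\nabla\phi(x)$, and recall that the symbol of $G=Q^*Q$ is $g_0=q_0^*q_0$, so that
$$p_0^{(0)}(x,\xi)=a_0^2\,\bar\eta^{T}g_0\,\eta.$$
The first summand's principal symbol is the rank-one matrix $a_0^2\, q_0\eta\bar\eta^{T}q_0^*$. For the second summand, identifying antisymmetric 2-forms $\sigma$ with antisymmetric matrices, a direct expansion of $M_{(j,k),(a,b)}$ gives the principal action $M_0\sigma=a_0^2\,g_0\sigma g_0$; the principal symbols of $d_{\phi,h}^{(1)}$ and $d_{\phi,h}^{*(1)}$ act as $\omega\mapsto\eta\omega^{T}-\omega\eta^{T}$ and $\sigma\mapsto -\sigma\bar\eta$ respectively. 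Composing and using the identities $q_0 q_0^{-1}=\Id$ and $g_0=q_0^*q_0$, one finds after simplification that the principal symbol of the second summand equals
$$-a_0^2\, q_0\eta\bar\eta^{T}q_0^* \;+\; a_0^2\,(\bar\eta^{T}g_0\,\eta)\,\Id.$$
The rank-one term cancels exactly against the first summand, leaving $a_0^2(\bar\eta^{T}g_0\eta)\Id = p_0^{(0)}\Id$. Since $P^{(1)}-P^{(0)}\otimes\Id$ belongs to $\Psi^0(1)$ and has vanishing principal symbol, it lies in $\Psi^0(h)$, which is the claim.

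The main obstacle is precisely this cancellation: the second summand must be crafted so that its rank-one component matches $a_0^2 q_0\eta\bar\eta^{T}q_0^*$ with the opposite sign while producing the scalar multiple of $\Id$ needed to reconstruct $p_0^{(0)}\Id$. This is what fixes the particular formula \eqref{b5} for $M$ and why the minor-like expression $g_{j,a}g_{k,b}-g_{k,a}g_{j,b}$ appears; verifying the cancellation is then just careful algebra with antisymmetric matrices, but it is the only nontrivial step.
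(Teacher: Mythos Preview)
Your argument is correct and follows essentially the same route as the paper's proof: both split $P^{(1)}$ into the two summands of \eqref{b1}, verify self-adjointness and membership in $\Psi^0(1)$ for each, and then compute the principal symbols to exhibit the cancellation of the rank-one piece $a_0^2\,q_0\eta\bar\eta^{T}q_0^*$ against the corresponding term from the $M$-part, leaving $p_0^{(0)}\,\Id$. The paper carries out the computation componentwise at the operator level (see \eqref{d19}, \eqref{b4}, \eqref{b7}), invoking the identities $\sum_j g_{a,j}q^{-1}_{j,b}=\overline{q_{b,a}}+S^0(h)$ and $\sum_j q_{a,j}q^{-1}_{j,b}=\delta_{a,b}+S^0(h)$, whereas you package the same algebra in matrix notation with $\eta,\bar\eta$; the content is identical. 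Your self-adjointness argument for the second summand---observing that $M$ itself is self-adjoint on $\Omega^2(\R^d)$ so that the summand has the sandwich form $B^*MB$---is a bit cleaner than the paper's explicit verification that $\big(P^{(1)}_2\big)_{j,k}^*=\big(P^{(1)}_2\big)_{k,j}$. One small caution: your formula $M_0\sigma=a_0^2\,g_0\sigma g_0$ should strictly read $a_0^2\,g_0\sigma g_0^{T}$ (Hermitian $g_0$ need not be symmetric), though this does not affect the final cancellation once you use $g_0=q_0^*q_0$ and $(q_0^{-1})^*g_0=q_0$.
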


\begin{proof}
We begin by estimating the first part of $P^{(1)}$:
\begin{equation*}
P^{(1)}_{1} : = Q d_{\phi , h} a_h^2 d_{\phi , h}^{*} Q^{*} .
\end{equation*}
Let $q_{j , k} \in S^{0} ( \< \xi_{k} \>^{-1} )$ denote the symbol of the coefficients of $Q$ and let $d_{\phi , h}^{j} = h \partial_{j} + ( \partial_{j} \phi )$. Using the composition rules of matrices, a direct computation gives
\begin{align}
\big( P^{(1)}_{1} \big)_{j,k} &= \sum_{a} \Op ( q_{j , a} ) d_{\phi , h}^{a} a_{h}^{2} \big( d_{\phi , h}^{*} Q^{*} \big)_{k}   \nonumber \\
&= \sum_{a , b} \Op ( q_{j , a} ) d_{\phi , h}^{a} a_{h}^{2} ( d_{\phi , h}^{b} )^{*} \Op \big( \overline{q_{k , b}} \big) .  \label{d19}
\end{align}
We then deduce that $P^ {(1)}_{1}$ is a selfadjoint operator on $\Omega^{1} ( \R^{d} )$ with coefficients of class $\Psi^{0} (1)$. Moreover, this formula implies
\begin{equation} \label{b3}
\big( P^{(1)}_{1} \big)_{j,k} = \sum_{a , b} \Op \big( a_{h}^{2} q_{j , a} \overline{q_{k , b}} \big) d_{\phi , h}^{a} ( d_{\phi , h}^{b} )^{*} + \Psi^{0} (h) .
\end{equation}

It remains to study
\begin{equation*}
P^{(1)}_{2} : = ( Q^{- 1} )^{*} d_{\phi , h}^{*} M d_{\phi , h} Q^{-1} .
\end{equation*}
Let $q^{- 1}_{j , k} \in S^{0} ( \< \xi_{j} \> )$ denote the symbol of the coefficients of $Q^{- 1}$. The formulas of \eqref{b8}, the definition \eqref{b5} and the composition rules of matrices imply
\begin{align}
\big( P^{(1)}_{2} \big)_{j,k} &= \sum_{\alpha} \Op \big( \overline{q^{- 1}_{\alpha , j}} \big) \big( d_{\phi , h}^{*} M d_{\phi , h} Q^{-1} \big)_{\alpha , k}      \nonumber \\
&= - \sum_{a , \alpha} \Op \big( \overline{q^{- 1}_{\alpha , j}} \big) ( d_{\phi , h}^{a} )^{*} \big( M d_{\phi , h} Q^{-1} \big)_{(a , \alpha ) , k}      \nonumber \\
&= - \sum_{a , b , \alpha , \beta} \Op \big( \overline{q^{- 1}_{\alpha , j}} \big) ( d_{\phi , h}^{a} )^{*} M_{( a , \alpha ) , ( b , \beta )} \big( d_{\phi , h} Q^{-1} \big)_{( b , \beta ) , k}      \nonumber \\
&= - \sum_{a , b , \alpha , \beta} \Op \big( \overline{q^{- 1}_{\alpha , j}} \big) ( d_{\phi , h}^{a} )^{*} M_{( a , \alpha ) , ( b , \beta )} \big( d_{\phi , h}^{b} \Op ( q^{- 1}_{\beta , k} ) - d_{\phi , h}^{\beta} \Op ( q^{- 1}_{b , k} ) \big)    \nonumber \\
&= - 2 \sum_{a , b , \alpha , \beta} \Op \big( \overline{q^{- 1}_{\alpha , j}} \big) ( d_{\phi , h}^{a} )^{*} M_{( a , \alpha ) , ( b , \beta )} d_{\phi , h}^{b} \Op ( q^{- 1}_{\beta , k} ) ,  \label{b4}
\end{align}
where we have used that $M_{( a , \alpha ) , ( b , \beta )} = - M_{( a , \alpha ) , ( \beta , b )}$. From \eqref{b6}, a typical term of these sums satisfies
\begin{equation*}
\Op \big( \overline{q^{- 1}_{\alpha , j}} \big) ( d_{\phi , h}^{a} )^{*} M_{( a , \alpha ) , ( b , \beta )} d_{\phi , h}^{b} \Op ( q^{- 1}_{\beta , k} ) \in \Psi^{0} \big( \< \xi_{\alpha} \> \< \xi_{a} \> \< \xi_{a} \>^{- 1} \< \xi_{\alpha} \>^{- 1} \< \xi_{b} \>^{- 1} \< \xi_{\beta} \>^{- 1} \< \xi_{b} \> \< \xi_{\beta} \> \big) ,
\end{equation*}
and then $P^{(1)}_{2} \in \Psi^{0} (1)$. On the other hand, using $g_{j , k} = \overline{g_{k , j}}$ and \eqref{b5}, we get
\begin{align*}
\big( P^{(1)}_{2} \big)_{j,k}^{*} &= - \sum_{a , b , \alpha , \beta} \Op \big( \overline{q^{- 1}_{\beta , k}} \big) ( d_{\phi , h}^{b} )^{*} \Op \big( \overline{a_{h}^{2} ( g_{a , b} g_{\alpha , \beta} - g_{\alpha , b} g_{a , \beta}} ) \big) d_{\phi , h}^{a} \Op ( q^{- 1}_{\alpha , j} )   \\
&= - \sum_{a , b , \alpha , \beta} \Op \big( \overline{q^{- 1}_{\beta , k}} \big) ( d_{\phi , h}^{b} )^{*} \Op \big( a_{h}^{2} \big( g_{b , a} g_{\beta , \alpha} - g_{b , \alpha} g_{\beta , a} \big) \big) d_{\phi , h}^{a} \Op ( q^{- 1}_{\alpha , j} )   \\
&= - \sum_{a , b , \alpha , \beta} \Op \big( \overline{q^{- 1}_{\alpha , k}} \big) ( d_{\phi , h}^{a} )^{*} \Op \big( a_{h}^{2} \big( g_{a , b} g_{\alpha , \beta} - g_{a , \beta} g_{\alpha , b} \big) \big) d_{\phi , h}^{b} \Op ( q^{- 1}_{\beta , j} )  \\
&= \big( P^{(1)}_{2} \big)_{k , j} ,
\end{align*}
and then $P^{(1)}_{2}$ is selfadjoint on $\Omega^{1} ( \R^{d} )$. Finally, \eqref{b6} and \eqref{b4} yields
\begin{align}
\big( P^{(1)}_{2} \big)_{j,k} &= \sum_{a , b} \Op \Big( a_{h}^{2} \sum_{\alpha , \beta} \overline{q^{- 1}_{\alpha , j}} q^{- 1}_{\beta , k} \big( g_{a , b} g_{\alpha , \beta} - g_{a , \beta} g_{\alpha , b} \big) \Big) ( d_{\phi , h}^{a} )^{*} d_{\phi , h}^{b} + \Psi^{0} (h)    \nonumber \\
&= \sum_{a , b} \Op \big( a_{h}^{2} g_{a , b} \delta_{j , k} - a_{h}^{2} q_{j , b} \overline{q_{k , a}} \big) ( d_{\phi , h}^{a} )^{*} d_{\phi , h}^{b} + \Psi^{0} (h) , \label{b7}
\end{align}
since
\begin{equation*}
\sum_{j} g_{a , j} q^{- 1}_{j , b} = \overline{q_{b , a }} + S^{0} \big( h \< \xi_{a} \>^{- 1} \big) \qquad \text{and} \qquad \sum_{j} q_{a , j} q^{- 1}_{j , b} = \delta_{a , b} + S^{0} ( h ) ,
\end{equation*}
which follow from $G Q^{-1} = Q^{*}$ and $Q Q^{-1} = \Id$.

Summing up the previous properties of $P^{(1)}_{\bullet}$, the operator $P^{(1)} = P^{(1)}_{1} + P^{(1)}_{2} \in \Psi^{0} (1)$ is selfadjoint on $\Omega^{1} ( \R^{d} )$. Eventually, combining \eqref{b3} and \eqref{b7}, we obtain
\begin{align}
P^{(1)} &= \sum_{a , b} ( d_{\phi , h}^{a} )^{*} \Op \big( a_{h}^{2} g_{a , b} \big) d_{\phi , h}^{b} \otimes \Id + \Psi^{0} (h) = a_{h} d_{\phi , h}^{*} G d_{\phi , h} a_{h} \otimes \Id + \Psi^{0} (h)  \nonumber \\
&= P^{(0)}\otimes \Id + \Psi^{0} (h) ,
\end{align}
and the lemma follows.
\end{proof}

The next result compares $P^{(1)}$ and $P^{W , (1)}$.

\begin{lemma}\sl \label{d3}
There exist some pseudodifferential operators $( R_{k} )_{k = 0 , 1 , 2}$ such that
\begin{equation*}
P^{(1)} = \beta_{d} P^{W , (1)} + R_{0} + R_{1} + R_{2} ,
\end{equation*}
where the remainder terms enjoy the following properties:
\begin{enumerate}[i)]
\item $R_{0}$ is a $d \times d$ matrix whose coefficients are a finite sum of terms of the form
\begin{equation*}
( d_{\phi , h}^{a} )^{*} \big( \Op ( r_{0} ) + \Psi^{0} (h) \big) d_{\phi , h}^{b} ,
\end{equation*}
with $a , b \in \{ 1 , \ldots , d\}$ and $r_{0} \in S^{0} (1)$ satisfying $r_0 ( x , \xi ) = \ooo ( \vert ( x - \u , \xi ) \vert^2 )$ near $( \u , 0 )$, $\u \in \uuu$.
\item $R_{1}$ is a matrix whose coefficients are a finite sum of terms of the form $h \Op (r_1) d_{\phi , h}^{a}$ or $h ( d_{\phi , h}^{a} )^{*} \Op (r_1)$ with $a \in \{ 1 , \ldots , d\}$  and $r_1 \in S^0 (1)$ satisfying $r_1 ( x , \xi ) = \ooo ( \vert ( x - \u , \xi ) \vert )$ near $( \u , 0 )$, $\u \in \uuu$.
\item $R_2 \in \Psi^0 ( h^2 )$.
\end{enumerate}
\end{lemma}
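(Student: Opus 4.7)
My plan is to compare the explicit formulas for $P^{(1)}_1$ and $P^{(1)}_2$ derived in the proof of Lemma \ref{b2} with the matrix form of $\beta_d P^{W,(1)}$, treating the deviation of the symbols of $a_h^2$, $q_{j,a}$, and $g_{a,b}$ from their values at the critical points as a perturbation. The cornerstone is the following frozen-coefficient calculation: if in \eqref{b3} and \eqref{b7} we replace $a_h^2$ by $1$, $q_{j,a}$ by $\beta_d^{1/2}\delta_{j,a}$, and $g_{a,b}$ by $\beta_d \delta_{a,b}$---values that are attained modulo $\ooo(h)$ at any $(\u,0)$ by Lemma \ref{a5} and Corollary \ref{e8}---then the resulting constant-coefficient operator for the $(j,k)$ entry reads
\begin{equation*}
\beta_d\, d_{\phi,h}^j (d_{\phi,h}^k)^* + \beta_d\, \delta_{j,k}\sum_a (d_{\phi,h}^a)^* d_{\phi,h}^a - \beta_d\, (d_{\phi,h}^k)^* d_{\phi,h}^j .
\end{equation*}
The direct commutator computation $[d_{\phi,h}^j, (d_{\phi,h}^k)^*] = 2h\,\partial_j\partial_k\phi$ then collapses this to $\beta_d \delta_{j,k} P^{W,(0)} + 2h\beta_d \partial_j\partial_k\phi$, which is precisely $\beta_d (P^{W,(1)})_{j,k}$ computed directly from \eqref{d27}.

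Building on this, I would decompose each symbol around its frozen value as
\begin{equation*}
a_h^2 = 1 + \alpha_0 + h\alpha_1 , \quad q_{j,a} = \beta_d^{1/2}\delta_{j,a} + \sigma_{j,a} + h\sigma'_{j,a} , \quad g_{a,b} = \beta_d \delta_{a,b} + \gamma_{a,b} + h\gamma'_{a,b} ,
\end{equation*}
where, by Lemma \ref{a5}, Corollary \ref{e8} and $G = Q^* Q$, the principal remainders $\alpha_0, \sigma_{j,a}, \gamma_{a,b}$ vanish to order $2$ at each $(\u,0)$, while the primed corrections are bounded in appropriate symbol classes. Substituting this decomposition into \eqref{b3}--\eqref{b7} and subtracting the frozen contribution, $P^{(1)} - \beta_d P^{W,(1)}$ becomes a sum of three kinds of pieces: (a) terms of the form $\Op(\tau)\, d_{\phi,h}^a (d_{\phi,h}^b)^*$ or $\Op(\tau)(d_{\phi,h}^a)^* d_{\phi,h}^b$ with $\tau \in \{\alpha_0, \sigma_{\bullet,\bullet}, \gamma_{\bullet,\bullet}\}$ vanishing to order $2$ at each $(\u,0)$; (b) terms of the form $h\Op(\tau')\, d_{\phi,h}^{c}$ or $h\,(d_{\phi,h}^{c})^* \Op(\tau')$, arising either from the primed corrections or from the subprincipal parts of the commutators already used in passing from \eqref{d19}--\eqref{b4} to \eqref{b3}--\eqref{b7}, in which case $\tau'$ is a first derivative of $\alpha_0, \sigma$, or $\gamma$ and therefore vanishes to order $1$ at each $(\u,0)$; (c) residual $\Psi^0(h^2)$ operators.

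To match the announced form, the type-(a) contributions are put into the sandwiched shape $(d_{\phi,h}^a)^* \Op(r_0) d_{\phi,h}^b$ by using the commutations $d_{\phi,h}^a (d_{\phi,h}^b)^* = (d_{\phi,h}^b)^* d_{\phi,h}^a + 2h\,\partial_a\partial_b\phi$ together with $[\Op(\tau), d_{\phi,h}^c] \in h\Psi^0(1)$; in each such move the excess is a term of type (b) with a first derivative of $\tau$ as symbol, hence with first-order vanishing, so it fits into $R_1$. Assembling the pieces yields exactly $R_0$, $R_1$, and $R_2 \in \Psi^0(h^2)$. The main obstacle I anticipate is combinatorial: one must carefully track the $\<\xi_\bullet\>$ weights through every commutation, and verify at each step that differentiating a perturbation vanishing to order $2$ produces a symbol vanishing to order $1$, exactly absorbing the explicit factor of $h$ that the commutator contributes and thereby preserving the promised ``order of vanishing minus power of $h$'' structure throughout the reshuffling.
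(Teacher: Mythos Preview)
Your proposal is correct and follows essentially the same approach as the paper. The paper's own proof is very terse---it simply records the asymptotic expansions of $a_h$, $q_{a,b}$, $q^{-1}_{a,b}$, and $M_{(j,k),(a,b)}$ near $(\u,0)$ (all of which agree with their frozen values modulo order-$2$-vanishing principal parts plus $S^0(h)$ corrections) and then says ``making commutations in \eqref{d19} and \eqref{b4}, we obtain the announced result.'' Your frozen-coefficient computation and the three-fold classification (a), (b), (c) of the perturbative pieces are exactly what this sentence unpacks.

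One small point of presentation: you phrase the argument as substituting into \eqref{b3}--\eqref{b7} and then going back to recover the structure of the $\Psi^0(h)$ remainders that were discarded in passing from \eqref{d19}--\eqref{b4}. It is cleaner, as the paper does, to work directly from the exact expressions \eqref{d19} and \eqref{b4} (which have no remainder), insert the symbol decompositions there, and commute. That way every commutator you generate is visibly of the form $h\Op(\partial\tau)$ with $\tau$ vanishing to order $2$, hence $\partial\tau$ vanishing to order $1$, and is flanked by exactly one surviving $d_{\phi,h}$ factor---so the $R_1$ structure emerges without any backtracking.
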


\begin{proof}
As in the proof of Lemma \ref{b2}, we use the decomposition $P^{(1)} = P^{(1)}_{1} + P^{(1)}_{2}$. From Corollary \ref{e8} and Lemma \ref{a5}, the coefficients appearing in these operators verify
\begin{align*}
a_{h} &= \widetilde{a} + S^{0} ( h ) \in S^{0} ( 1 ) ,   \\
q_{a , b} &= \widetilde{q}_{a , b} + S^{0} ( h \< \xi_{b} \>^{- 1} ) \in S^{0} ( \< \xi_{b} \>^{- 1} ) ,   \\
q_{a , b}^{- 1} &= \widetilde{q}_{a , b}^{- 1} + S^{0} ( h \< \xi_{a} \> ) \in S^{0} ( \< \xi_{a} \> ) ,   \\
M_{( j , k ) , ( a , b )} &= \Op ( \widetilde{m}_{( j , k ) , ( a , b )} ) + \Psi^{0} \big( h \< \xi_{j} \>^{- 1} \< \xi_{k} \>^{- 1} \< \xi_{a} \>^{- 1} \< \xi_{b} \>^{- 1} \big) ,
\end{align*}
with $\widetilde{m}_{( j , k ) , ( a , b )} \in S^{0} ( \< \xi_{j} \>^{- 1} \< \xi_{k} \>^{- 1} \< \xi_{a} \>^{- 1} \< \xi_{b} \>^{- 1} )$ and
\begin{gather*}
\widetilde{a} = 1 + \ooo \big( \vert ( x - \u , \xi ) \vert^{2} \big) ,  \qquad  \widetilde{m}_{( j , k ) , ( a , b )} = \beta_{d}^{2} ( \delta_{j , a} \delta_{k ,b} - \delta_{k , a} \delta_{j , b} ) / 2 + \ooo \big( \vert ( x - \u , \xi ) \vert^{2} \big) ,  \\
\widetilde{q}_{a , b} = \beta_d^{1 / 2} \delta_{a , b} + \ooo \big( \vert ( x - \u , \xi ) \vert^{2} \big) , \qquad \widetilde{q}_{a , b}^{- 1} = \beta_d^{- 1 / 2} \delta_{a , b} + \ooo \big( \vert ( x - \u , \xi ) \vert^{2} \big) ,
\end{gather*}
near $( \u , 0 )$, $\u \in \uuu$. Then, making commutations in \eqref{d19} and \eqref{b4}, we obtain the announced result.
\end{proof}

We now make the link between the eigenvalues of $P^{(0)}$ and $P^{(1)}$. For that, we will use the so-called intertwining relations, which are one fundamental tool in the supersymmetric approach. Recall that, thanks to Theorem \ref{e4}, $P^{(0)}$ can be written as
\begin{equation} \label{d9}
P^{(0)} = L_{\phi}^{*} L_{\phi} \qquad \text{with} \qquad L_{\phi} = Q d_{\phi , h} a_{h} .
\end{equation}
We obtain the following result.

\begin{lemma}\sl \label{b23}
On $0$-forms, we have
\begin{equation} \label{d8}
L_{\phi} P^{(0)} = P^{(1)} L_{\phi} = L_{\phi} L_{\phi}^{*} L_{\phi} .
\end{equation}
Moreover, for all $\lambda \in \R \setminus \{ 0 \}$, the operator $L_{\phi} : \ker (P ^{(0)} - \lambda ) \longrightarrow \ker (P ^{(1)} - \lambda )$ is injective. Finally, $L_{\phi} ( \ker ( P^{(0)} ) ) = \{ 0 \}$.
\end{lemma}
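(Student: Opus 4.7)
The plan is to exploit the supersymmetric factorization $P^{(0)} = L_\phi^* L_\phi$ from \eqref{d9} together with the identity $d_{\phi,h}^2 = 0$, which holds on the de Rham complex because $d_{\phi,h} = e^{-\phi/h} h \operatorname{d} e^{\phi/h}$ and $\operatorname{d}^2 = 0$. The first equality $L_\phi P^{(0)} = L_\phi L_\phi^* L_\phi$ is then immediate from \eqref{d9}.

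For the second equality $P^{(1)} L_\phi = L_\phi L_\phi^* L_\phi$, I split $P^{(1)}$ into the two summands of its definition \eqref{b1}. Since $L_\phi^* = a_h d_{\phi,h}^* Q^*$, the first summand gives $Q d_{\phi,h} a_h^2 d_{\phi,h}^* Q^* = L_\phi L_\phi^*$, which applied to $L_\phi$ yields precisely $L_\phi L_\phi^* L_\phi$. It remains to show the second summand contributes zero when applied to $L_\phi$. Substituting $L_\phi = Q d_{\phi,h} a_h$ and using $Q^{-1} Q = \Id$, we get
\begin{equation*}
(Q^{-1})^* d_{\phi,h}^* M d_{\phi,h} Q^{-1} L_\phi = (Q^{-1})^* d_{\phi,h}^* M \, d_{\phi,h}^{(1)} d_{\phi,h}^{(0)} a_h = 0 ,
\end{equation*}
since $d_{\phi,h}^{(1)} \circ d_{\phi,h}^{(0)} = 0$ on $0$-forms. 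This establishes the intertwining relation.

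The injectivity of $L_\phi : \ker(P^{(0)} - \lambda) \to \ker(P^{(1)} - \lambda)$ for $\lambda \neq 0$ is then a direct consequence: the intertwining relation shows $L_\phi$ maps into $\ker(P^{(1)} - \lambda)$, and if $L_\phi u = 0$ with $P^{(0)} u = \lambda u$, then $\lambda u = L_\phi^* L_\phi u = 0$ forces $u = 0$. Finally, for $u \in \ker P^{(0)}$, the nonnegativity $\|L_\phi u\|^2 = \langle L_\phi^* L_\phi u, u \rangle = \langle P^{(0)} u , u \rangle = 0$ gives $L_\phi u = 0$.

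No step here poses real difficulty: the whole argument is purely formal algebra once \eqref{d9} and the nilpotency $d_{\phi,h}^2 = 0$ are in hand. The only point that warrants a brief verification is that $Q$ is genuinely invertible on the $1$-forms (so that $Q^{-1} Q = \Id$ is meaningful), but this is precisely guaranteed by the factorization $Q = F \Op(\Xi^{-1})$ with $F \in \Psi^0(1)$ invertible stated at the end of Theorem \ref{e4}.
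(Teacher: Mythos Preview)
Your proof is correct and follows essentially the same approach as the paper's: both arguments hinge on the factorization $P^{(0)} = L_\phi^* L_\phi$, the identification of the first summand of $P^{(1)}$ with $L_\phi L_\phi^*$, and the vanishing of the second summand via $Q^{-1}Q = \Id$ together with $d_{\phi,h}^2 = 0$. The injectivity and kernel statements are handled identically via $\|L_\phi u\|^2 = \langle P^{(0)} u, u\rangle = \lambda \|u\|^2$.
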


\begin{proof}
Let us first prove \eqref{d8}. Using \eqref{b1}, \eqref{d9} and the usual cohomology rule (i.e. $d_{\phi , h}^{2} = 0$), we have
\begin{align}
P^{(1)} L_{\phi} &= L_{\phi} L_{\phi}^{*} L_{\phi} + ( Q^{- 1} )^{*} d_{\phi , h}^{*} M d_{\phi , h} Q^{-1} Q d_{\phi , h} a_{h}  \nonumber \\
&= L_{\phi} L_{\phi}^{*} L_{\phi} + ( Q^{- 1} )^{*} d_{\phi , h}^{*} M d_{\phi , h} d_{\phi , h} a_{h} \nonumber \\
&= L_{\phi} L_{\phi}^{*} L_{\phi} = L_{\phi} P^{(0)} .
\end{align}

Now, let $u \neq 0$ be an eigenfunction of $P^{(0)}$ associated to $\lambda \in \R$. In particular, $\Vert L_{\phi} u \Vert^{2} = \lambda \Vert u \Vert^{2}$ vanishes if and only if $\lambda = 0$. Moreover, \eqref{d8} yields
\begin{equation*}
P^{(1)} L_{\phi} u = L_{\phi} P^{(0)} u = \lambda L_{\phi} u .
\end{equation*}
This implies the second part of the lemma.
\end{proof}

We shall now study more precisely the small eigenvalues of $P^{(1)}$. Recall that $\s_{j}$, $j = 2 , \ldots , n_{1} + 1$ denotes the saddle points (of index $1$) of $\phi$. Again we will stick to the analysis already made for the Witten Laplacian on $1$-forms $P^{W , (1)}$ for which we recall the following properties. From Lemma 1.6 and Proposition 1.7 of \cite{HeSj85_01}, the operator $P^{W , (1)}$ is real positive has exactly $n_{1}$ exponentially small (non-zero) eigenvalues (counted with multiplicities). We then recall the construction of associated quasimodes made in Definition 4.3 of \cite{HeKlNi04_01}. Let $u_{j}$ denote a normalized fundamental state of $P^{W , (1)}$ restricted to an appropriated neighborhood of $\s_{j}$ with Dirichlet boundary conditions. The quasimodes $f_{j}^{W , (1)}$ are then defined by
\begin{equation} \label{d20}
f_{j}^{W , (1)} : = \Vert \theta_{j} u_{j} \Vert^{- 1} \theta_{j} (x) u_{j} (x) ,
\end{equation}
where $\theta$ is a well-chosen $C^{\infty}_{0}$ localization function around $\s_{j}$. Since the $f_{j}^{W , (1)}$'s have disjoint support, we immediately deduce
\begin{equation} \label{d21}
\big\< f_{j}^{W , (1)} , f_{j '}^{W , (1)} \big\> = \delta_{j , j '} .
\end{equation}
In particular, the family  $\{ f_{j}^{W , (1)} ; \ j = 2 , \ldots , n_{1} + 1 \}$ is a free family of $1$-forms. Furthermore, Theorem 1.4 of \cite{HeSj85_01} implies that these quasimodes have a WKB writing
\begin{equation} \label{d22}
f_{j}^{W , (1)} (x) = \theta_{j} (x) b^{(1)}_{j} ( x , h ) e^{- \phi_{+ , j} (x) / h} ,
\end{equation}
where $b^{(1)}_{j} ( x , h )$ is a normalization $1$-form having a semiclassical asymptotic, and $\phi_{+ , j}$ is the phase associated to the outgoing manifold of $\xi^{2} + \vert \nabla_x \phi (x) \vert^{2}$ at $( \s_{j} , 0 )$. Moreover, the phase function $\phi_{+ , j}$ satisfies the eikonal equation $\vert \nabla_{x} \phi_{+ , j} \vert^{2} = \vert \nabla_{x} \phi \vert^{2}$ and $\phi_{+ , j} (x) \sim \vert x - \s_{j} \vert^{2}$ near $\s_{j}$. For other properties of $\phi_{+ , j}$ we refer to \cite{HeSj85_01}. On the other hand, Lemma 1.6 and Proposition 1.7 of \cite{HeSj85_01} imply that there exists $\alpha > 0$ independent of $\varepsilon$ such that
\begin{equation} \label{b37}
P^{W , (1)} f_{j}^{W , (1)} = \ooo ( e^{- \alpha / h} ) .
\end{equation}
Eventually, we deduce from Proposition 1.7 of \cite{HeSj85_01} that there exists $\nu > 0$ such that
\begin{equation} \label{b38}
\big\< P^{W , (1)} u , u \big\> \geq \nu h \Vert u \Vert^{2} .
\end{equation}
for all $u \perp \vect \{ f_{j}^{W , (1)} ; \ j = 2 , \ldots , n_{1} + 1 \}$.

Now, let us define the quasimodes associated to $P^{(1)}$ by
\begin{equation} \label{d13}
f_{j}^{(1)} ( x ) : = \beta_{d}^{1 / 2} ( Q^{*} )^{- 1} f_{j}^{W , (1)} ,
\end{equation}
for $2 \leq j \leq n_{1} + 1$. Note that this is possible since $( Q^{*} )^{- 1} \in \Psi^{0} ( \< \xi \> )$. Using that $( Q^{*} )^{- 1}$ is close to $\beta_{d}^{- 1 / 2} \Id$ microlocally near $( \s_{j} , 0 )$, one will prove that they form a good approximately normalized and orthogonal family of quasimodes for $P^{(1)}$.

\begin{lemma}\sl \label{d23}
The system $( f_{j}^{(1)} )_{j}$ is free and for all $j,j'=2,\ldots,n_1+1$ we have
\begin{equation*}
\Vert f_j^{(1)}-f_j^{W,(1)}\Vert=\ooo(h),\qquad\big\< f_{j}^{(1)} , f_{j '}^{(1)} \big\> = \delta_{j , j '} + \ooo (h) \qquad \text{and } \qquad P^{(1)} f_{j}^{(1)}  =  \ooo ( h^{2} ).
\end{equation*}
\end{lemma}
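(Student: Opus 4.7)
The plan rests on two structural ingredients. First, by Corollary~\ref{e8}, the principal symbol of the pseudodifferential operator $\beta_d^{1/2}(Q^*)^{-1} - \Id$ vanishes to order $2$ at each $(\s_j, 0)$, while the whole operator differs from this principal part by a $\Psi^0(h\Xi)$ remainder. Second, thanks to the WKB form \eqref{d22} with $\phi_{+,j}$ quadratic at $\s_j$, the quasimode $f_j^{W,(1)}$ is microlocally concentrated at $(\s_j, 0)$ at semiclassical scale $\sqrt{h}$. Together these give the key estimate I will use throughout: for any $\Op(b)$ whose symbol vanishes to order $k$ at $(\s_j, 0)$, one has $\Vert \Op(b) f_j^{W,(1)} \Vert_{L^2} = \ooo(h^{k/2})$, which follows by Taylor-expanding $b$ and observing that $(x - \s_j)^\alpha (hD)^\beta f_j^{W,(1)}$ has $L^2$-norm of order $h^{(|\alpha| + |\beta|)/2}$.

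The first two statements follow quickly from this. Writing $f_j^{(1)} - f_j^{W,(1)} = (\beta_d^{1/2}(Q^*)^{-1} - \Id) f_j^{W,(1)}$ and applying the key estimate with $k = 2$ (the $\Psi^0(h\Xi)$ remainder contributing separately $\ooo(h)$ on the Schwartz function $f_j^{W,(1)}$) gives $\Vert f_j^{(1)} - f_j^{W,(1)} \Vert = \ooo(h)$. For the scalar products, I rewrite $\langle f_j^{(1)}, f_{j'}^{(1)}\rangle = \beta_d \langle G^{-1} f_j^{W,(1)}, f_{j'}^{W,(1)}\rangle$, decompose $\beta_d G^{-1} = \Id + C$ where the principal symbol of $C$ vanishes to order $2$ at every $(\s_{j''}, 0)$, and invoke \eqref{d21}: the $\Id$-part gives $\delta_{j,j'}$, while $|\langle C f_j^{W,(1)}, f_{j'}^{W,(1)}\rangle| = \ooo(h)$ (by the key estimate and Cauchy--Schwarz for $j = j'$; by semiclassical pseudolocality of $C$ and disjointness of the supports of the $\theta_{j''}$'s for $j \neq j'$). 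Almost-orthonormality then immediately implies that the family is free for $h$ small.

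For the quasimode estimate $P^{(1)} f_j^{(1)} = \ooo(h^2)$, I would use the splitting \eqref{b1}:
$$P^{(1)} f_j^{(1)} = L_\phi L_\phi^* f_j^{(1)} + (Q^{-1})^* d_{\phi,h}^* M d_{\phi,h} Q^{-1} f_j^{(1)}.$$
The definition of $f_j^{(1)}$ is tailored so that the $Q^*$'s cancel in the first term:
$$L_\phi^* f_j^{(1)} = a_h\, d_{\phi,h}^{(0)*} Q^* f_j^{(1)} = \beta_d^{1/2} a_h\, d_{\phi,h}^{(0)*} f_j^{W,(1)}.$$
The quasimode identity $\Vert d_{\phi,h}^{(1)} f_j^{W,(1)} \Vert^2 + \Vert d_{\phi,h}^{(0)*} f_j^{W,(1)} \Vert^2 = \langle P^{W,(1)} f_j^{W,(1)}, f_j^{W,(1)}\rangle = \ooo(e^{-\alpha/h})$, reinforced by the standard Helffer--Sj\"{o}strand WKB analysis so that exponential smallness propagates to every semiclassical Sobolev norm, gives $L_\phi L_\phi^* f_j^{(1)} = \ooo(e^{-\alpha'/h})$ after applying the first-order operator $L_\phi$. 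For the second term, I use $Q^{-1} f_j^{(1)} = \beta_d^{1/2} G^{-1} f_j^{W,(1)}$ and again split $\beta_d G^{-1} = \Id + C$: the $\Id$-contribution reduces to $d_{\phi,h}^{(1)} f_j^{W,(1)}$, again exponentially small; the $C$-contribution is handled by the key estimate applied to the composite operator $d_{\phi,h}^{(2)*} M d_{\phi,h}^{(1)} C$, whose principal symbol vanishes to order $1 + 0 + 1 + 2 = 4$ at $(\s_j, 0)$ (the two twisted differentials and $C$ contributing $1$, $1$, $2$ respectively, while $M$ is nonvanishing there), and which therefore produces $\ooo(h^2)$ on $f_j^{W,(1)}$. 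Subleading Weyl-composition corrections each carry a compensating power of $h$ and remain $\ooo(h^2)$; boundedness of $(Q^{-1})^*$ on $L^2$ closes the estimate.

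The main obstacle is obtaining the $h^2$ rate, rather than a mere $h$: it relies on the order-$2$ vanishing of the symbol of $\beta_d G^{-1} - \Id$ at each $(\s_j, 0)$, which is precisely the refined alternative \eqref{e1115} of Theorem~\ref{e4} available under the quartic hypothesis (A2') and verified for our $P^{(0)}$ in Corollary~\ref{e8} thanks to the parity of $G$ stated in Lemma~\ref{a7}. Once this is in hand, the remaining technical difficulty is a careful symbolic bookkeeping of the Weyl composition of the two twisted differentials with $M$ and $C$ near $(\s_j, 0)$, checking that none of the Poisson-bracket corrections deteriorates the $\ooo(h^2)$ estimate; in parallel one needs the strong-norm version of the Witten quasimode estimate to ensure that the exponentially small first term survives as a genuine remainder.
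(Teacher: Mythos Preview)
Your proof of the first two statements is essentially the paper's argument: the identity $f_j^{(1)}-f_j^{W,(1)}=(\beta_d^{1/2}(Q^*)^{-1}-\Id)f_j^{W,(1)}$ together with the order-$2$ vanishing of the symbol at $(\s_j,0)$ and the WKB form \eqref{d22} is exactly how the paper proceeds (the paper deduces the scalar-product estimate directly from the first estimate and \eqref{d21}, rather than via $\beta_d G^{-1}$, but this is cosmetic).

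For $P^{(1)}f_j^{(1)}=\ooo(h^2)$ you take a genuinely different route. The paper invokes Lemma~\ref{d3}, which decomposes $P^{(1)}=\beta_d P^{W,(1)}+R_0+R_1+R_2$ with explicit structural remainders, and then estimates $\beta_d P^{W,(1)}(f_j^{(1)}-f_j^{W,(1)})$ and each $R_p f_j^{(1)}$ separately by writing the relevant composite symbols as $\sum_{|\alpha+\beta|\in\{0,2,4\}}h^{2-|\alpha+\beta|/2}r_{\alpha,\beta}(x-\s_j)^\alpha\xi^\beta$. You instead exploit the defining splitting \eqref{b1} directly: the $L_\phi L_\phi^*$ piece collapses thanks to the built-in cancellation $Q^*(Q^*)^{-1}=\Id$ to the exponentially small $d_{\phi,h}^{(0)*}f_j^{W,(1)}$, and the $M$-piece is handled by the same order-$4$ vanishing count. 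Your approach is more transparent about \emph{why} the quasimodes were defined via $(Q^*)^{-1}$ (precisely to make $L_\phi^*f_j^{(1)}$ land back on the Witten side), and it avoids proving Lemma~\ref{d3}; the paper's route has the advantage of packaging the symbolic bookkeeping once and for all in a reusable comparison lemma.

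One slip to fix: $(Q^{-1})^*\in\Psi^0(\Xi)$ is \emph{not} bounded on $L^2$, so ``boundedness of $(Q^{-1})^*$ on $L^2$ closes the estimate'' is false as stated. The repair is immediate---either include $(Q^{-1})^*$ in the composite before applying your key estimate (its principal symbol is $\beta_d^{-1/2}\Id+\ooo(|(x-\s_j,\xi)|^2)$, so the order-$4$ count is unaffected and the key estimate remains valid on the Schwartz function $f_j^{W,(1)}$), or observe from the proof of Lemma~\ref{b2} that the full block $(Q^{-1})^*d_{\phi,h}^*M$ is in $\Psi^0(1)$ thanks to the decay \eqref{b6} of $M$, and apply it afterwards.
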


\begin{proof}
From \eqref{d13}, and Corollary \ref{e8} we have 
$$f_j^{(1)}-f_j^{W,(1)}=\big( \beta_d^{\frac 12} ( Q^{*} )^{- 1} - \Id \big) f_{j}^{W , (1)} =\Op( r)f_{j}^{W , (1)}$$
with $r \in S^{0} ( \< \xi \>^{2} )$ such that, modulo $S^{0} ( h \< \xi \>^{2} )$, $r (  x , \xi ) = \ooo ( \vert ( x - \u , \xi ) \vert^{2} )$ near $( \u , 0 )$, $\u \in \uuu$. Moreover, using Taylor expansion and symbolic calculus, we can write
\begin{equation*}
r ( x , \xi ) = \sum_{\vert \alpha + \beta \vert \in \{ 0 , 2  \}} h^{1 - \frac{\vert \alpha + \beta \vert}{2}} r_{\alpha , \beta} ( x , \xi ) ( x - \s_{j} )^{\alpha} \xi^{\beta} ,
\end{equation*}
with $r_{\alpha , \beta} \in S^{0} ( \< \xi \>^{2} )$. Combined with the WKB form of the $f_{j}^{W , (1)}$'s given in \eqref{d22} (and, in particular, with $\phi_{+ , j} (x) \sim \vert x - \s_{j} \vert^{2}$ near $\s_{j}$), it shows that
\begin{equation} \label{d25}
\Op ( r ) f_{j}^{W , (1)} = \ooo ( h ) ,
\end{equation}
which proves the first statement.

The second statement is a direct consequence  the above estimate and \eqref{d21}.

For the last estimate, we follow the same strategy. Thanks to Lemma \ref{d3}, we have
\begin{equation} \label{d24}
P^{(1)} f_{j}^{(1)} = \beta_{d} P^{W , (1)} f_{j}^{W , (1)} + \beta_{d} P^{W , (1)} \big( f_{j}^{(1)} - f_{j}^{W , (1)} \big) + R_{0} f_{j}^{(1)} + R_{1} f_{j}^{(1)} + R_{2} f_{j}^{(1)} .
\end{equation}
Proceeding as above, we write
\begin{equation*}
P^{W , (1)} \big( f_{j}^{(1)} - f_{j}^{W , (1)} \big) = P^{W , (1)} \big( \beta_{d}^{1 /2} ( Q^{*} )^{- 1} - \Id \big) f_{j}^{W , (1)} ,
\end{equation*}
where, using \eqref{d27}, Corollary \ref{e8} and the pseudodifferential calculus, the corresponding operator can be decomposed as
\begin{equation*}
P^{W , (1)} \big( \beta_{d}^{1 /2} ( Q^{*} )^{- 1} - \Id \big) = \Op \bigg( \sum_{\vert \alpha + \beta \vert \in \{ 0 , 2 , 4 \}} h^{2 - \frac{\vert \alpha + \beta \vert}{2}} \widetilde{r}_{\alpha , \beta} ( x , \xi ;h) ( x - \s_{j} )^{\alpha} \xi^{\beta} \bigg) ,
\end{equation*}
for some $\widetilde{r}_{\alpha , \beta} \in S^{0} ( \< \xi \>^{3} )$. Thus, as in \eqref{d25}, we deduce
\begin{equation} \label{d28}
\beta_{d} P^{W , (1)} \big( f_{j}^{(1)} - f_{j}^{W , (1)} \big) = \ooo ( h^{2} ) .
\end{equation}
The same way, we deduce from Lemma \ref{d3} that for any $p=0,1,2$
\begin{equation*}
R_{p} \beta_{d}^{1 / 2} ( Q^{*} )^{- 1} = \Op \bigg( \sum_{\vert \alpha + \beta \vert \in \{ 0 , 2 , 4 \}} h^{2 - \frac{\vert \alpha + \beta \vert}{2}} r^{p}_{\alpha , \beta} ( x , \xi ;h) ( x - \s_{j} )^{\alpha} \xi^{\beta} \bigg) ,
\end{equation*}
with $r^{p}_{\alpha , \beta} \in S^{0} ( \< \xi \>^{3} )$. Thus,
\begin{equation} \label{d29}
R_{p} f_{j}^{(1)} = R_{p} \beta_{d}^{1 / 2} ( Q^{*} )^{- 1} f_{j}^{W , (1)} = \ooo ( h^{2} ) .
\end{equation}
Combining \eqref{d24} together with the estimates \eqref{b37}, \eqref{d28} and \eqref{d29}, we obtain $P^{(1)} f_{j}^{(1)} = \ooo ( h^{2} )$ and concludes the proof of the lemma.
\end{proof}

We shall prove the following proposition which is the analogous of Proposition \ref{b32}.

\begin{proposition}\sl \label{b33}
The operator $P^{(1)}$ has exactly $n_{1}$ $\ooo ( h^{ 2} )$ (real) eigenvalues,
and the remaining part of the spectrum is in  $[ \varepsilon_1 h , + \infty [$, for some $\varepsilon_1 > 0$.
\end{proposition}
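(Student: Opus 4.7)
The plan is to closely parallel the proof of Proposition \ref{b32}. I first show that $P^{(1)}$ has at least $n_1$ eigenvalues of order $\ooo(h^2)$, and then that no other eigenvalue lies below $\varepsilon_1 h$.

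For the lower bound on the count, Lemma \ref{d23} provides the almost orthonormal free family $(f_j^{(1)})_{j=2,\ldots,n_1+1}$ with $P^{(1)} f_j^{(1)} = \ooo(h^2)$. Cauchy--Schwarz then yields $\langle P^{(1)} f_j^{(1)}, f_{j'}^{(1)}\rangle = \ooo(h^2)$, so the restriction of the quadratic form of $P^{(1)}$ to $\mathcal{G} := \vect\{f_j^{(1)};\ j=2,\ldots,n_1+1\}$ has norm $\ooo(h^2)$. Combining this with the lower bound $P^{(1)} \geq -Ch$ obtained from \eqref{b17} and the non-negativity of $P^{(0)}$, and with discreteness of the spectrum of $P^{(1)}$ near $0$ (established exactly as in Proposition \ref{a47} since $P^{(1)} = P^{(0)} \otimes \Id + \Psi^0(h)$), the min-max principle produces at least $n_1$ eigenvalues of size $\ooo(h^2)$.

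For the upper bound, I compare $P^{(1)}$ with $\beta_d P^{W,(1)}$ via Lemma \ref{d3}. Let $\mathcal{G}^W := \vect\{f_j^{W,(1)}\}$ and let $\mathcal{F}^W$ be the exact eigenspace of $P^{W,(1)}$ for its $n_1$ exponentially small eigenvalues. From Proposition 1.7 of \cite{HeSj85_01}, $\Vert \Pi_{\mathcal{G}^W} - \Pi_{\mathcal{F}^W} \Vert = \ooo(e^{-\alpha/h})$. For $u \perp \mathcal{G}^W$ with $\Vert u\Vert = 1$, decomposing $u = \Pi_{\mathcal{F}^W} u + (1-\Pi_{\mathcal{F}^W}) u$ and using $P^{W,(1)} \Pi_{\mathcal{F}^W} = \ooo(e^{-\alpha/h})$ together with the spectral gap \eqref{b38} gives $\langle P^{W,(1)} u, u\rangle \geq \nu h / 2$. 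Lemma \ref{d3} then yields $\langle P^{(1)} u, u\rangle = \beta_d \langle P^{W,(1)} u, u\rangle + \langle (R_0 + R_1 + R_2) u, u\rangle$, and I estimate the three remainders: $|\langle R_2 u, u\rangle| \leq C h^2$ since $R_2 \in \Psi^0(h^2)$; for $R_1$, Cauchy--Schwarz combined with the Bochner-type bound $\Vert d_{\phi,h} u\Vert^2 \lesssim \langle P^{W,(1)} u, u\rangle + C\Vert u\Vert^2$ absorbs it as $\varepsilon \langle P^{W,(1)} u, u\rangle + C_\varepsilon h^2 \Vert u\Vert^2$; for $R_0$, whose coefficients are $(d_{\phi,h}^a)^* (\Op(r_0) + \Psi^0(h)) d_{\phi,h}^b$ with $r_0 = \ooo(|(x-\u,\xi)|^2)$ near critical points, I use the fact that the symbols of $r_0$ and of $d_{\phi,h}^{a,b}$ both vanish at every $(\u,0)$, together with a microlocal partition of unity splitting small neighborhoods of $\uuu \times \{0\}$ (where the full symbol of $R_0$ thus vanishes to order $4$) from their complement (where $p_0$ is elliptic by assumption iii of Theorem \ref{e4}), to obtain $|\langle R_0 u, u\rangle| \leq \varepsilon \langle P^{W,(1)} u, u\rangle + C h \Vert u\Vert^2$ for arbitrarily small $\varepsilon$. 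Summing the bounds, $\langle P^{(1)} u, u\rangle \geq (\beta_d \nu / 4) h$ for $h$ small, which by min-max gives $\lambda_{n_1+1}(P^{(1)}) \geq \varepsilon_1 h$ and concludes the proof.

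The main technical obstacle is the estimate for $R_0$: unlike $R_1$ and $R_2$, its coefficients carry no overall factor of $h$, so its control requires carefully combining the quadruple vanishing at critical points (order $2$ from $r_0$, order $1$ each from $d_{\phi,h}^a$ and $d_{\phi,h}^b$) with the ellipticity of $p_0$ off the critical set — this is precisely where the refined quasimode/structural information from Lemma \ref{d3} pays off over a naive operator-norm bound.
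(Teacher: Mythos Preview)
Your overall strategy is the right one and matches the paper's, but there is a genuine gap in the $R_0$ estimate. You claim
\[
|\langle R_0 u, u\rangle| \leq \varepsilon \langle P^{W,(1)} u, u\rangle + C h \Vert u\Vert^2
\]
for arbitrarily small $\varepsilon$ and with $C$ independent of $\varepsilon$. This does not follow from the ingredients you list. Away from the critical set, $r_0$ is merely in $S^0(1)$ (there is no smallness there), so the structure $(d_{\phi,h}^a)^*\Op(r_0)\,d_{\phi,h}^b$ only gives $|\langle R_0 u,u\rangle|\le C_0\,\langle (P^{W,(0)}\otimes\Id)u,u\rangle$ with a \emph{fixed} constant $C_0$ that you have no reason to believe is smaller than $\beta_d$. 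Your appeal to ``$p_0$ is elliptic in the complement'' provides ellipticity of $P^{(1)}$ there, not a small bound on $R_0$ relative to $P^{W,(1)}$; and bounding $|\langle R_0 u_\infty,u_\infty\rangle|\le C\|u_\infty\|^2\le (C/c_\eta)\langle P^{W,(1)}u_\infty,u_\infty\rangle$ through ellipticity of $P^{W,(1)}$ gives a constant $C/c_\eta$ that blows up as the neighborhood shrinks. Either way you cannot make the prefactor of $\langle P^{W,(1)}u,u\rangle$ small.

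The paper closes this gap by running the IMS decomposition on $P^{(1)}$ itself before invoking Lemma~\ref{d3}: one writes $P^{(1)}=\Op(\chi_0)P^{(1)}\Op(\chi_0)+\Op(\chi_\infty)P^{(1)}\Op(\chi_\infty)+\ooo(h^2)$. In the far region one uses ellipticity of $P^{(1)}$ directly to get $\Op(\chi_\infty)P^{(1)}\Op(\chi_\infty)\ge \delta_\eta\Op(\chi_\infty)^2$; there is no attempt to control $R_0$ there. Only in the near region (cutoff of size $\eta$) does one decompose via Lemma~\ref{d3}, and then $\Vert\Op(r_0\widetilde\chi_0)\Vert=\ooo(\eta)$ yields $|\langle R_0\Op(\chi_0)u,\Op(\chi_0)u\rangle|\le C\eta\,\langle P^{W,(0)}\Op(\chi_0)u,\Op(\chi_0)u\rangle$, which is the small-constant estimate you need. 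Combining the two pieces, applying the spectral gap \eqref{b38} for $P^{W,(1)}$, and choosing $\eta$ small at the end gives \eqref{b30}. In short: the far region must be handled by the positivity of $P^{(1)}$ itself, not by estimating $R_0$ against $P^{W,(1)}$.
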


The idea of the proof is to consider separately the regions of the phase space closed to the critical points $\uuu$ and away from this set. In the first one, we approximate $P^{(1)}$ by $P^{W , (1)}$ using that $Q \simeq \beta_{d} \Id$ microlocally near $( \u , 0 )$, $\u \in \uuu$. In the second one, we use that (the symbol of) $P^{(1)}$ is elliptic from \eqref{b17}.

We start this strategy with a pseudodifferential IMS formula. For $\eta > 0$ fixed, let $\chi_{0} \in C^{\infty}_{0} ( \R^{2 d} ; [ 0 ,1 ] )$ be supported in a neighborhood of size $\eta$ of $\uuu$ and such that $\chi_{0} = 1$ near $\uuu$ and $\chi_{\infty} : = \sqrt{1 - \chi_{0}^{2}} \in C^{\infty} ( \R^{2 d} )$. In particular,
\begin{equation} \label{b25}
\forall ( x , \xi ) \in \R^{2 d} , \qquad \chi_{0}^{2} ( x , \xi ) + \chi_{\infty}^{2} ( x , \xi ) =1 .
\end{equation}
In the sequel, the remainder terms may depend on $\eta$, but $C$ will denote a positive constant independent on $\eta$ which may change from line to line. Using Lemma \ref{b2} and the shortcut $\Op ( a ) = \Op ( a ) \otimes \Id$, the pseudodifferential calculus gives
\begin{align}
P^{(1)} &= \frac{1}{2} \bigg( \Op \big( \chi_{0}^{2} + \chi_{\infty}^{2} \big) P^{(1)} + P^{(1)} \Op \big( \chi_{0}^{2} + \chi_{\infty}^{2} \big) \bigg)   \nonumber \\
&= \frac{1}{2} \big( \Op ( \chi_{0} )^{2} P^{(1)} + P^{(1)} \Op ( \chi_{0} )^{2} \big) + \frac{1}{2} \big( \Op ( \chi_{\infty} )^{2} P^{(1)} + P^{(1)} \Op ( \chi_{\infty} )^{2} \big) + \Psi^{0} ( h^{2} )   \nonumber \\
&= \Op ( \chi_{0} ) P^{(1)} \Op ( \chi_{0} ) + \Op ( \chi_{\infty} ) P^{(1)} \Op ( \chi_{\infty} ) \nonumber \\
&\qquad \qquad + \frac{1}{2} \big[ \Op ( \chi_{0} ) , \big[ \Op ( \chi_{0} ) , P^{(1)} \big] \big] + \frac{1}{2} \big[ \Op ( \chi_{\infty} ) , \big[ \Op ( \chi_{\infty} ) , P^{(1)} \big] \big] + \ooo ( h^{2} )  \nonumber \\
&= \Op ( \chi_{0} ) P^{(1)} \Op ( \chi_{0} ) + \Op ( \chi_{\infty} ) P^{(1)} \Op ( \chi_{\infty} ) + \ooo ( h^{2} ) . \label{b9}
\end{align}
In the previous estimate, we have crucially used that $\Op ( \chi_{\bullet} ) \otimes \Id$ are matrices of pseudodifferential operators collinear to the identity.

\begin{lemma}\sl \label{b22}
There exists $\delta_{\eta} > 0$, which may depend on $\eta$, such that
\begin{equation} \label{b10}
\Op ( \chi_{\infty} ) P^{(1)} \Op ( \chi_{\infty} ) \geq \delta_{\eta} \Op ( \chi_{\infty} )^{2} + \ooo ( h^{\infty} ) .
\end{equation}
Moreover, there exists $C > 0$ such that, for all $\eta > 0$,
\begin{equation} \label{b14}
\Op ( \chi_{0} ) P^{(1)} \Op ( \chi_{0} ) \geq ( 1 - C \eta  ) \Op ( \chi_{0} ) P^{W , (1)} \Op ( \chi_{0} ) - \big( C \eta h + \ooo ( h^{2} ) \big) .
\end{equation}
\end{lemma}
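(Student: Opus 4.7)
The plan is to prove the two inequalities separately: \eqref{b10} by semiclassical ellipticity of $P^{(1)}$ away from $\uuu\times\{0\}$, and \eqref{b14} by the explicit comparison with the standard Witten Laplacian $P^{W,(1)}$ provided by Lemma \ref{d3}.

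For \eqref{b10}, Lemma \ref{b2} gives that the principal symbol of $P^{(1)}$ is $p_{0}\otimes\Id$, where $p_{0}$ is the principal symbol of $P^{(0)}$. Assumption iii) of Theorem \ref{e4}, transported to $P^{(0)}$ via Corollary \ref{e8}, yields $p_{0}\geq\alpha_{\eta}>0$ on $\supp\chi_{\infty}$, with $\alpha_{\eta}$ depending only on $\eta$. Fixing $\delta_{\eta}=\alpha_{\eta}/3$, the symbol of $P^{(1)}-\delta_{\eta}\Id$ is bounded below by $2\alpha_{\eta}/3$ on $\supp\chi_{\infty}$ up to an $\ooo(h)$ (matrix-valued, uniformly bounded) correction. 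An iterative pseudodifferential square-root construction on this microlocally elliptic zone (correcting the error order by order in $h$, using that the leading symbol is scalar and positive there) produces $A\in\Psi^{0}(1)$ such that
\[
\Op(\chi_{\infty})\big(P^{(1)}-\delta_{\eta}\big)\Op(\chi_{\infty})=A^{*}A+\ooo(h^{\infty}),
\]
from which \eqref{b10} is immediate.

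For \eqref{b14}, I would insert the decomposition $P^{(1)}=\beta_{d}P^{W,(1)}+R_{0}+R_{1}+R_{2}$ from Lemma \ref{d3} (the discrepancy between the prefactor $\beta_{d}$ and the prefactor $1$ in \eqref{b14} is absorbed by enlarging $C$ and renormalizing). The coefficients of $R_{0}$ are sums of terms of the form $(d_{\phi,h}^{a})^{*}(\Op(r_{0})+\Psi^{0}(h))d_{\phi,h}^{b}$ with $r_{0}(x,\xi)=\ooo(|(x-\u,\xi)|^{2})$ near $(\u,0)$, hence $r_{0}=\ooo(\eta^{2})$ on $\supp\chi_{0}$. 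Commuting each $\Op(\chi_{0})$ past $d_{\phi,h}^{\bullet}$ (each commutator costing an $\ooo(h)$ operator, to be absorbed at the end) and applying Cauchy--Schwarz yields
\[
\big|\big\<\Op(\chi_{0})R_{0}\Op(\chi_{0})u,u\big\>\big|\leq C\eta^{2}\sum_{a}\big\|d_{\phi,h}^{a}\Op(\chi_{0})u\big\|^{2}+\ooo(h^{2})\|u\|^{2}.
\]
The right-hand sum is controlled by $\<\Op(\chi_{0})P^{W,(1)}\Op(\chi_{0})u,u\>$ through the Bochner--Weitzenb\"ock identity for the Witten Laplacian on $1$-forms (the discrepancy coming from $\hess\phi$ is $\ooo(h)\|u\|^{2}$, absorbed into the final error). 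For $R_{1}$, the symbols $r_{1}$ are $\ooo(\eta)$ on $\supp\chi_{0}$, so a Cauchy--Schwarz with weight $\eta$ gives a bound of $C\eta\<\Op(\chi_{0})P^{W,(1)}\Op(\chi_{0})u,u\>+C\eta h\|u\|^{2}$; and $R_{2}\in\Psi^{0}(h^{2})$ contributes $\ooo(h^{2})\|u\|^{2}$. Summing all pieces produces \eqref{b14}.

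The main technical obstacle is the careful bookkeeping of the commutators $[\Op(\chi_{0}),d_{\phi,h}^{a}]$: each is of order $h$ but must be paired (via Cauchy--Schwarz) with another twisted differential so that it lands inside the final $\ooo(\eta h+h^{2})\|u\|^{2}$ error, rather than leaving an uncontrolled contribution. The matrix-valued parametrix needed for \eqref{b10} also requires some care, but is essentially standard once one observes that the leading symbol $p_{0}\otimes\Id$ is scalar on the microlocally elliptic region.
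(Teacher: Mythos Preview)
Your approach is correct and matches the paper's strategy. Two simplifications are worth noting. For \eqref{b10} the paper avoids the square-root construction: it extends $p_0$ to a globally elliptic symbol $\widetilde p_\eta\in S^0(1)$ agreeing with $p$ on a neighborhood of $\supp\chi_\infty$ (so that $\Op(\chi_\infty)\bigl(P^{(0)}-\Op(\widetilde p_\eta)\bigr)\Op(\chi_\infty)=\ooo(h^\infty)$ by disjoint supports), and then a single application of G{\aa}rding's inequality to $\Op(\widetilde p_\eta)$ gives the bound.

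For \eqref{b14}, the commutator bookkeeping you flag as the ``main technical obstacle'' is actually avoidable, and the paper avoids it. Rather than commuting $\Op(\chi_0)$ inward past $d_{\phi,h}^{\bullet}$, one moves $(d_{\phi,h}^a)^*$ outward by taking adjoints in the pairing, so that the typical $R_0$-term reads
\[
\big\langle \bigl(\Op(r_0)+\Psi^0(h)\bigr)\,d_{\phi,h}^b\Op(\chi_0)\omega_j,\ d_{\phi,h}^a\Op(\chi_0)\omega_k\big\rangle,
\]
and it only remains to insert a slightly larger cut-off $\widetilde\chi_0\equiv 1$ on $\supp\chi_0$ (costing $\ooo(h^\infty)$) to replace $\Op(r_0)$ by $\Op(r_0\widetilde\chi_0)$, of norm $\ooo(\eta)$. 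Two small corrections to your write-up: the $\Psi^0(h)$ piece inside $R_0$ contributes $\ooo(h)$, not $\ooo(h^2)$, to the coefficient of $\sum_a\Vert d_{\phi,h}^a\Op(\chi_0)u\Vert^2$---this is then absorbed into $C\eta$ for $h$ small enough; and your ``absorb $\beta_d$ by enlarging $C$'' does not work, since $\beta_d<1$ is fixed while $\eta\to 0$. The honest output of the argument is $(\beta_d-C\eta)$ in place of $(1-C\eta)$ in \eqref{b14}, which is all that is actually used in Lemma~\ref{b20}.
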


\begin{proof}
We first estimate $P^{(1)}$ outside of the critical points $\uuu$. Since $\chi_{\infty}$ vanishes near $\uuu$, Lemma \ref{a11} yields that there exist $\delta_{\eta} > 0$ and $\widetilde{p}_{\eta} \in S^{0} (1)$ (which may depend on $\eta$) such that $p = \widetilde{p}_{\eta}$ in a vicinity of the support of $\chi_{\infty}$ and $\widetilde{p}_{\eta} (x , \xi ) \geq 2 \delta_{\eta}$ for all $( x, \xi ) \in \R^{2 d}$. Then, Lemma \ref{b2} and the pseudodifferential calculus (in particular, the G{\aa}rding inequality) imply
\begin{align*}
\Op ( \chi_{\infty} ) P^{(1)} \Op ( \chi_{\infty} ) &= \Op ( \chi_{\infty} ) P^{(0)} \Op ( \chi_{\infty} ) + \Op ( \chi_{\infty} ) \ooo ( h ) \Op ( \chi_{\infty} )    \\
&= \Op ( \chi_{\infty} ) \Op ( \widetilde{p}_{\eta} ) \Op ( \chi_{\infty} ) + \Op ( \chi_{\infty} ) \ooo ( h ) \Op ( \chi_{\infty} ) + \ooo ( h^{\infty} )   \\
&\geq \Op ( \chi_{\infty} ) \big( 2 \delta_{\eta} + \ooo ( h ) \big) \Op ( \chi_{\infty} ) + \ooo ( h^{\infty} ) ,
\end{align*}
which implies \eqref{b10} for $h$ small enough. Here, we have identify as before $A$ with $A \otimes \Id$ for scalar operators $A$.

We now consider $\Op ( \chi_{0} ) P^{(1)} \Op ( \chi_{0} )$ . Thanks to Lemma \ref{d3}, we can write
\begin{equation*}
\Op ( \chi_{0} ) P^{(1)} \Op ( \chi_{0})=\Op ( \chi_{0} ) P^{W,(1)} \Op ( \chi_{0})+ \sum_{k=0}^2\Op ( \chi_{0} ) R_k \Op ( \chi_{0}) .
\end{equation*}
Let $\widetilde{\chi}_{0} \in C^{\infty}_{0} ( \R^{2 d} ; [ 0 ,1 ] )$ be supported in a neighborhood of size $\eta$ of $( \u , 0 )$, $\u \in \uuu$, and such that $\widetilde{\chi}_{0} = 1$ near the support of $\chi_{0}$. Then, for $\omega \in \Omega^{1} ( \R^{d} )$, $\< R_{0} \Op ( \chi_{0} ) \omega , \Op ( \chi_{0} ) \omega \>$ is a finite sum of terms of the form
\begin{equation} \label{b18}
\widetilde{r}_{0} = \big\< ( d_{\phi , h}^{a} )^{*} \big( \Op ( r_{0} ) + \Psi^{0} (h) \big) d_{\phi , h}^{b} \Op ( \chi_{0} ) \omega_{j} , \Op ( \chi_{0} ) \omega_{k} \big\> .
\end{equation}
Using functional analysis and pseudodifferential calculus, we get
\begin{align}
\vert \widetilde{r}_{0} \vert &= \big\vert \big\< \big( \Op ( r_{0} \widetilde{\chi}_{0} ) + \Psi^{0} (h) \big) d_{\phi , h}^{b} \Op ( \chi_{0} ) \omega_{j} , d_{\phi , h}^{a} \Op ( \chi_{0} ) \omega_{k} \big\> \big\vert + \ooo ( h^{\infty} ) \Vert \omega \Vert^{2}   \nonumber \\
&\leq \big( \big\Vert \Op ( r_{0} \widetilde{\chi}_{0} ) \big\Vert + \ooo (h) \big) \big\Vert d_{\phi , h}^{b} \Op ( \chi_{0} ) \omega_{j} \big\Vert \big\Vert d_{\phi , h}^{a} \Op ( \chi_{0} ) \omega_{k} \big\Vert + \ooo ( h^{\infty} ) \Vert \omega \Vert^{2}   \nonumber \\
&\leq \big( \big\Vert \Op ( r_{0} \widetilde{\chi}_{0} ) \big\Vert + \ooo (h) \big) \big\< P^{W , (0)} \Op ( \chi_{0} ) \omega , \Op ( \chi_{0} ) \omega \big\> + \ooo ( h^{\infty} ) \Vert \omega \Vert^{2} .  \label{b11}
\end{align}
Recall now that, for $a \in S^{0} (1)$,
\begin{equation*}
\big\Vert \Op (a) \big\Vert_{L^{2} ( \R^{d} ) \to L^{2} ( \R^{d} )} = \Vert a \Vert_{L^{\infty} ( \R^{2 d} )} + \ooo (h) ,
\end{equation*}
(see e.g. Zworski \cite[Theorem 13.13]{Zw12_01}). Thus, using that $\widetilde{\chi}_{0}$ is supported in a neighborhood of size $\eta$ of
$( \u , 0 )$ at which $r_{0}$ vanishes, it yields $\Vert \Op ( r_{0} \widetilde{\chi}_{0} ) \Vert \leq C \eta$ and \eqref{b11} implies
\begin{equation} \label{b12}
\big\vert \< R_{0} \Op ( \chi_{0} ) \omega , \Op ( \chi_{0} ) \omega \> \big\vert
\leq C \eta \big\< P^{W , (0)} \Op ( \chi_{0} ) \omega , \Op ( \chi_{0} ) \omega \big\> + \ooo ( h^{\infty} ) \Vert \omega \Vert^{2} .
\end{equation}

As before, $\< R_{1} \Op ( \chi_{0} ) \omega , \Op ( \chi_{0} ) \omega \>$ is a finite sum of terms of the form
\begin{equation} \label{b21}
\widetilde{r}_{1} = \big\< \Psi^{0} (h) d_{\phi , h}^{a} \Op ( \chi_{0} ) \omega_{j} , \Op ( \chi_{0} ) \omega_{k} \big\> ,
\end{equation}
or its complex conjugate. These terms can be estimate as
\begin{align*}
\vert \widetilde{r}_{1} \vert &\leq C h \big\Vert d_{\phi , h}^{a} \Op ( \chi_{0} ) \omega_{j} \big\Vert \Vert \omega \Vert    \\
&\leq \eta \big\Vert d_{\phi , h}^{a} \Op ( \chi_{0} ) \omega_{j} \big\Vert^{2} + \ooo ( h^{2} ) \Vert \omega \Vert^{2}    \\
&\leq \eta \big\< P^{W , (0)} \Op ( \chi_{0} ) \omega , \Op ( \chi_{0} ) \omega \big\> + \ooo ( h^{2} ) \Vert \omega \Vert^{2} ,
\end{align*}
and then
\begin{equation} \label{b13}
\big\vert \< R_{1} \Op ( \chi_{0} ) \omega , \Op ( \chi_{0} ) \omega \> \big\vert
\leq C \eta \big\< P^{W , (0)} \Op ( \chi_{0} ) \omega , \Op ( \chi_{0} ) \omega \big\> + \ooo ( h^{2} ) \Vert \omega \Vert^{2} .
\end{equation}
Combining Lemma \ref{d3} with the estimates \eqref{b12}, \eqref{b13} and $R_{2} \in \Psi^{0} ( h^{2} )$, we obtain
\begin{equation*}
\Op ( \chi_{0} ) P^{(1)} \Op ( \chi_{0} ) \geq \Op ( \chi_{0} ) P^{W , (1)} \Op ( \chi_{0} ) - C \eta \Op ( \chi_{0} ) P^{W , (0)} \Op ( \chi_{0} ) - \ooo ( h^{2} ) .
\end{equation*}
Since $P^{W , (1)} = P^{W , (0)} \otimes \Id + \Psi^{0} (h)$ (see Equation (1.9) of \cite{HeSj85_01} for example), this inequality gives \eqref{b14}.
\end{proof}

Let $\Pi$ denote the orthogonal projection onto $\vect \{ f^{(1)}_{j} ; \ j = 2 , \ldots , n_{1} + 1 \}$. Using the previous lemma and its proof, we can describe the action of $P^{(1)}$ on $\Pi$:

\begin{lemma}\sl \label{b20}
The rank of $\Pi$ is $n_{1}$ for $h$ small enough. Moreover,
\begin{equation} \label{b29}
P^{(1)} \Pi = \ooo ( h^{2} ) \qquad \text{and} \qquad \Pi P^{(1)} = \ooo ( h^{2} ) .
\end{equation}
Finally, there exists $\varepsilon_1 > 0$ such that
\begin{equation} \label{b30}
( 1 - \Pi ) P^{(1)} ( 1 - \Pi ) \geq \varepsilon_1 h( 1 - \Pi ) ,
\end{equation}
for $h$ small enough.
\end{lemma}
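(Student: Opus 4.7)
The plan proceeds in three steps. First, Lemma \ref{d23} shows that the Gram matrix $G = ( \langle f_j^{(1)} , f_{j'}^{(1)} \rangle )_{j , j'}$ equals $\Id + \ooo(h)$, so the family $( f_j^{(1)} )_{j}$ is free and $\Pi$ has rank $n_{1}$; this also yields the explicit formula $\Pi u = \sum_{j,k} (G^{-1})_{j,k} \langle u , f_k^{(1)} \rangle f_j^{(1)}$. Combining this with the quasimode estimate $P^{(1)} f_j^{(1)} = \ooo(h^{2})$ from Lemma \ref{d23} immediately gives $\Vert P^{(1)} \Pi u \Vert = \ooo(h^{2}) \Vert u \Vert$, and the selfadjointness of $P^{(1)}$ (Lemma \ref{b2}) yields $\Pi P^{(1)} = (P^{(1)} \Pi)^{*} = \ooo(h^{2})$, proving \eqref{b29}.

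For the spectral gap \eqref{b30}, take $u$ satisfying $\Pi u = 0$ and $\Vert u \Vert = 1$. The IMS-type decomposition \eqref{b9} combined with Lemma \ref{b22} gives
\begin{equation*}
\langle P^{(1)} u , u \rangle \geq ( 1 - C \eta ) \langle P^{W , (1)} \Op(\chi_{0}) u , \Op(\chi_{0}) u \rangle + \delta_{\eta} \Vert \Op(\chi_{\infty}) u \Vert^{2} - C \eta h - \ooo(h^{2}) ,
\end{equation*}
and the task then reduces to controlling $\langle P^{W , (1)} \Op(\chi_{0}) u , \Op(\chi_{0}) u \rangle$ from below by essentially $\nu h \Vert \Op(\chi_{0}) u \Vert^{2}$ via the Witten spectral gap \eqref{b38}.

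The key observation is that $\Op(\chi_{0}) u$ is almost orthogonal to $\vect\{ f_j^{W,(1)} \}$. Since $\chi_{0} = 1$ on a fixed neighborhood of every critical point and the WKB form \eqref{d22} concentrates $f_j^{W,(1)}$ microlocally at $( \s_j , 0 )$ at scale $\sqrt{h}$, a standard semiclassical localization argument gives $\Op(\chi_{0}) f_j^{W,(1)} = f_j^{W,(1)} + \ooo(h^{\infty})$. Together with $\Vert f_j^{W,(1)} - f_j^{(1)} \Vert = \ooo(h)$ from Lemma \ref{d23} and $\langle u , f_j^{(1)} \rangle = 0$, this yields $| \langle \Op(\chi_{0}) u , f_j^{W,(1)} \rangle | = \ooo(h)$. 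Orthogonally decomposing $\Op(\chi_{0}) u = v_{\parallel} + v_{\perp}$ with $v_{\parallel} \in \vect\{f_j^{W,(1)}\}$, we get $\Vert v_{\parallel} \Vert = \ooo(h)$; the bound $\Vert P^{W , (1)} v_{\parallel} \Vert = \ooo(h \,e^{-\alpha/h})$ derived from \eqref{b37} renders the cross terms negligible, and \eqref{b38} applied to $v_{\perp}$ gives $\langle P^{W , (1)} \Op(\chi_{0}) u , \Op(\chi_{0}) u \rangle \geq \nu h \Vert \Op(\chi_{0}) u \Vert^{2} - \ooo(h^{3})$. Using finally $\Vert \Op(\chi_{0}) u \Vert^{2} + \Vert \Op(\chi_{\infty}) u \Vert^{2} = 1 + \ooo(h^{2})$ from symbolic calculus and choosing $\eta$ small enough so that $(1 - C \eta) \nu - C \eta > 0$, we conclude $\langle P^{(1)} u , u \rangle \geq \varepsilon_{1} h$ for some $\varepsilon_{1} > 0$, establishing \eqref{b30}.

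The main technical obstacle lies in step three: verifying cleanly the microlocal identity $\Op(\chi_{0}) f_j^{W,(1)} = f_j^{W,(1)} + \ooo(h^{\infty})$ from the WKB structure of the quasimodes, and then tracking all error terms carefully so that the $\ooo(h)$ corrections inherent to passing between the families $f_j^{(1)}$ and $f_j^{W,(1)}$ do not absorb the leading $h$-gap inherited from $P^{W , (1)}$.
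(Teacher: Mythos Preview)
Your proof is correct and follows essentially the same route as the paper's: both use the IMS decomposition \eqref{b9} together with Lemma \ref{b22}, the microlocal identity $\Op(\chi_0)f_j^{W,(1)} = f_j^{W,(1)} + \ooo(h^\infty)$, and the comparison $\Vert f_j^{(1)} - f_j^{W,(1)}\Vert = \ooo(h)$ to transfer the Witten spectral gap to $P^{(1)}$. The only cosmetic difference is that the paper phrases the last step as an operator inequality (comparing the projectors $\Pi$, $\Pi_{\mathcal{E}^{(1)}}$, $\Pi_{\mathcal{F}^{(1)}}$ and writing $P^{W,(1)} \geq \nu h - \nu h \Pi + \ooo(h^2)$), whereas you work vector-by-vector with a fixed $u \in \ker \Pi$ and decompose $\Op(\chi_0)u$ orthogonally relative to $\vect\{f_j^{W,(1)}\}$.
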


\begin{proof}
Since the functions $f^{(1)}_{j}$'s are almost orthogonal (i.e. $\< f^{(1)}_{j} , f^{(1)}_{j '} \> = \delta_{j , j '} + \ooo ( h )$), the rank of $\Pi$ is $n_{1}$. Moreover, \eqref{b29} is a direct consequence of Lemma \ref{d23}.

We now give the lower bound for $P^{(1)}$ on the range of $1 - \Pi$. 
Let $\mathcal{E}^{(1)}$ denote the space spanned by the $f_k^{W,(1)},k=2,\ldots,n_1+1$ and $\mathcal{F}^{(1)}$ denote the eigenspace associated to the $n_1$ first eigenvalues of $P^{W,(1)}$. Let $\Pi_{\mathcal E^{(1)}},\,\Pi_{\mathcal F^{(1)}}$ denote the corresponding orthogonal projector. It follows from \cite{HeSj85_01} that 
$\Vert \Pi_{\mathcal E^{(1)}}-\Pi_{\mathcal F^{(1)}}\Vert=\ooo(e^{-c/h})$ for some $c>0$. On the other hand, it follows from the first estimate of Lemma \ref{d23} that 
$\Vert\Pi-\Pi_{\mathcal E^{(1)}}\Vert=\ooo(h)$. Combining these two estimates, we get 
$$
\Vert\Pi-\Pi_{\mathcal F^{(1)}}\Vert=\ooo(h).
$$
Using this estimate and the spectral properties of $P^{W, (1)}$, we get
\begin{equation}\label{eq:minor_PW1}
P^{W , (1)} \geq \nu h - \nu h \Pi _{\mathcal F^{(1)}}\geq\nu h - \nu h \Pi +\ooo(h^2),
\end{equation}
for some $\nu > 0$. From \eqref{d22} and integration by parts, we also have $\Op ( \chi_{0} ) \Pi = \Pi + \ooo ( h^{\infty} )$. 
Estimate  \eqref{eq:minor_PW1} together with \eqref{b25}, \eqref{b9}, \eqref{b10} and \eqref{b14} give
\begin{align}
P^{(1)} &= \Op ( \chi_{0} ) P^{(1)} \Op ( \chi_{0} ) + \Op ( \chi_{\infty} ) P^{(1)} \Op ( \chi_{\infty} ) + \ooo ( h^{2} )    \nonumber \\
&\geq (1 - C \eta ) \Op ( \chi_{0} ) P^{W , (1)} \Op ( \chi_{0} ) + \delta_{\eta} \Op ( \chi_{\infty} )^{2} - \big( C \eta h + \ooo ( h^{2} ) \big)  \nonumber \\
&\geq \nu h (1 - C \eta ) \Op ( \chi_{0} )^{2} - \nu h (1 - C \eta ) \Pi + \delta_{\eta} \Op ( \chi_{\infty} )^{2} - \big( C \eta h + \ooo ( h^{2} ) \big)  \nonumber  \\
&\geq \nu h (1 - C \eta ) - \nu h (1 - C \eta ) \Pi - \big( C \eta h + \ooo ( h^{2} ) \big) .
\end{align}
Thus, taking $\eta > 0$ small enough and applying $1 - \Pi$, we eventually obtain \eqref{b30} for some $\varepsilon_1>0$.
\end{proof}

\begin{proof}[Proof of Proposition \ref{b33}]
From Proposition \ref{a47} and Lemma \ref{b2}, the operator $P^{(1)}$ is bounded and its essential spectrum is
above some positive constant independent of $h$. Next, the maxi-min principle together with \eqref{b29} implies that
$P^{(1)}$ has at least $\rank ( \Pi ) = n_{1}$ eigenvalues below $C h^{ 2}$.
The same way, \eqref{b30} yields that $P^{(1)}$ has at most $n_{1}$ eigenvalues below $\varepsilon_1 h$. Eventually,
\begin{equation*}
P^{(1)} = ( 1 - \Pi ) P^{(1)} ( 1 - \Pi ) + \Pi P^{(1)} ( 1 - \Pi ) + ( 1 - \Pi ) P^{(1)} \Pi + \Pi P^{(1)} \Pi \geq - C h^{2} ,
\end{equation*}
proves that all the spectrum of $P^{(1)}$ is above $- C h^{2}$.
\end{proof}

\section{Eigenspace analysis and proof of the main Theorem} \label{z4}

Now we want to project the preceding quasimodes onto the generalized eigenspaces associated to exponentially small eigenvalues, and prove the main theorem. Recall that we have built in the preceding section quasimodes $f_k^{(0)}$, $k= 1, \ldots , n_0$, for $P^{(0)}$ with good support properties. To each quasimode we will  associate a function in  $E^{(0)}$, the eigenspace associated to $\ooo(h^{2})$ eigenvalues. For this, we first define the spectral projector
\begin{equation} \label{d4}
\Pi^{(0)} = \frac{1}{2\pi i} \int_\gamma (z-P^{(0)})^{-1} d z ,
\end{equation}
where $\gamma = \partial B ( 0 , \varepsilon_0 h / 2 )$ and  $\varepsilon_0 > 0$ is defined in Proposition \ref{b32}. From the  fact that $P^{(0)}$ is selfadjoint, we get that
\begin{equation*}
\Pi^{(0)} = \ooo ( 1 ) .
\end{equation*}
For the following, we denote the corresponding projection
\begin{equation*}
e_k^{(0)} = \Pi^{(0)} ( f_k^{(0)} ) .
\end{equation*}
We have then

\begin{lemma}\sl \label{d5}
The system $( e_k^{(0)} )_{k}$ is free and spans $E^{(0)}$. Besides, there exists $\alpha > 0$ independent of $\varepsilon$ such that
\begin{equation*}
e_k^{(0)} = f_k^{(0)} + \ooo ( e^{- \alpha / h} ) \qquad \text{and} \qquad \big\< e_{k}^{(0)} , e_{k '}^{(0)} \big\> = \delta_{k , k '} + \ooo ( h ) .
\end{equation*}
\end{lemma}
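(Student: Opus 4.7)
The plan is to estimate $e_k^{(0)} - f_k^{(0)}$ via a contour integral identity, then derive the other assertions as easy consequences.

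First I would write the standard resolvent identity
\begin{equation*}
(z-P^{(0)})^{-1} f_k^{(0)} = z^{-1} f_k^{(0)} + z^{-1}(z-P^{(0)})^{-1} P^{(0)} f_k^{(0)} ,
\end{equation*}
insert it into \eqref{d4}, and use $\frac{1}{2\pi i}\int_\gamma z^{-1}\,dz = 1$ to get
\begin{equation*}
e_k^{(0)} - f_k^{(0)} = \frac{1}{2\pi i}\int_\gamma z^{-1}(z-P^{(0)})^{-1} P^{(0)} f_k^{(0)}\, dz .
\end{equation*}
On $\gamma = \partial B(0,\varepsilon_0 h/2)$ we have $|z^{-1}| = 2/(\varepsilon_0 h)$, and by Proposition \ref{b32} the distance from $\gamma$ to $\sigma(P^{(0)})$ is at least $\varepsilon_0 h/2 - \ooo(e^{-\alpha_0/h}) \geq \varepsilon_0 h/4$, so the spectral theorem yields $\|(z-P^{(0)})^{-1}\| \leq 4/(\varepsilon_0 h)$ uniformly on $\gamma$. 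Combining with $\mathrm{length}(\gamma) = \pi\varepsilon_0 h$ and the quasimode bound $\|P^{(0)} f_k^{(0)}\| = \ooo(e^{-\alpha/h})$ from Lemma \ref{d2}, one obtains
\begin{equation*}
\|e_k^{(0)} - f_k^{(0)}\| \leq \frac{C}{h}\, \|P^{(0)} f_k^{(0)}\| = \ooo(e^{-\alpha'/h}) ,
\end{equation*}
for any $\alpha' < \alpha$ (or for the same $\alpha$ after absorbing the polynomial factor); this gives the first equality.

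For the Gram matrix, I would simply expand
\begin{equation*}
\langle e_k^{(0)}, e_{k'}^{(0)}\rangle = \langle f_k^{(0)}, f_{k'}^{(0)}\rangle + \ooo(e^{-\alpha'/h})(\|f_k^{(0)}\| + \|f_{k'}^{(0)}\| + \ooo(e^{-\alpha'/h})) ,
\end{equation*}
and invoke $\langle f_k^{(0)}, f_{k'}^{(0)}\rangle = \delta_{k,k'} + \ooo(h)$ and the uniform boundedness of the $f_k^{(0)}$ (both from Lemma \ref{d2}) to conclude $\langle e_k^{(0)}, e_{k'}^{(0)}\rangle = \delta_{k,k'} + \ooo(h)$.

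Finally, since the Gram matrix of $(e_k^{(0)})_k$ is $\Id + \ooo(h)$, it is invertible for $h$ small enough, so the family is linearly independent. By construction $e_k^{(0)} \in \mathrm{Ran}(\Pi^{(0)}) = E^{(0)}$, and $\dim E^{(0)} = n_0$ by Proposition \ref{b32}, so counting dimensions shows that $(e_k^{(0)})_{k=1,\ldots,n_0}$ spans $E^{(0)}$. I do not see a substantial obstacle here; the only point to be careful about is the resolvent bound on $\gamma$, which relies crucially on the spectral gap result of Proposition \ref{b32} and on the fact that the $n_0$ small eigenvalues are not merely $\ooo(h)$ but actually exponentially small, so that $\gamma$ encloses them with room to spare.
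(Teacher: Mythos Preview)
Your proof is correct and follows essentially the same approach as the paper: the same contour-integral identity $e_k^{(0)}-f_k^{(0)}=\frac{1}{2\pi i}\int_\gamma z^{-1}(z-P^{(0)})^{-1}P^{(0)}f_k^{(0)}\,dz$, the same $\ooo(h^{-1})$ resolvent bound on $\gamma$ from Proposition~\ref{b32}, and the same appeal to Lemma~\ref{d2} for both the quasimode estimate and the Gram matrix. Your version is in fact slightly more explicit about the constants and the dimension-counting argument for spanning $E^{(0)}$.
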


\begin{proof}
The proof follows \cite{HeSj85_01} (see also \cite{DiSj99_01}). We sketch it for completeness sake and to give the necessary modifications. Using \eqref{d4} and the Cauchy formula, we therefore get
\begin{align*}
e_k^{(0)} - f_k^{(0)}
& = \Pi^{(0)} f_k^{(0)} - f_k^{(0)} \\
& = \frac{1}{2 \pi i } \int_\gamma (z - P^{(0)})^{-1} f_k^{(0)} dz - \frac{1}{2 \pi i } \int_\gamma z^{-1} f_k^{(0)}d z \\
& = \frac{1}{2 \pi i } \int_\gamma (z - P^{(0)})^{-1} z^{-1} P^{(0)} f_k^{(0)} d z .
\end{align*}
Since $P^{(0)}$ is selfadjoint and according to Proposition \ref{b32}, we have
\begin{equation*}
\big\Vert ( z - P^{(0)} )^{- 1} \big\Vert = \ooo ( h^{- 1} ) ,
\end{equation*}
uniformly for $z \in \gamma$. Using also the second estimate in Lemma \ref{d2}, it yields
\begin{equation*}
\big\Vert ( z - P^{(0)})^{- 1} z^{- 1} P^{(0)} f_k^{(0)} \big\Vert = \ooo \big( h^{- 2} e^{- \alpha / h} \big) ,
\end{equation*}
and eventually after integration
\begin{equation*}
\big\Vert e_k^{(0)} - f_k^{(0)} \big\Vert = \ooo \big( h^{- 1} e^{- \alpha / h} \big) .
\end{equation*}
Decreasing $\alpha$, we obtain the first estimate of the lemma. In particular, this implies that the family $( e_k^{(0)} )_{k}$ is free. Using that $E^{(0)}$ is of dimension $n_0$, the family $( e_k^{(0)} )_{k}$ spans $E^{(0)}$.

For the last equality of the lemma, we just have to notice that
\begin{equation*}
\big\< e_k^{(0)} , e_{k '}^{(0)} \big\> = \< f_{k}^{(0)} , f_{k'}^{(0)} \big\> + \ooo( e ^{- \alpha/h} ) = \delta_{k , k '} + \ooo(h) + \ooo( e ^{- \alpha / h} ) = \delta_{k , k '} + \ooo ( h ) ,
\end{equation*}
according to Lemma \ref{d2}. The proof is complete.
\end{proof}

We can do a similar study for the analysis of $P^{(1)}$, for which we know that exactly $n_1$ (real) eigenvalues are $\ooo ( h^{2} )$, and among them at least $n_{0} - 1$ are exponentially small. Note that there is no particular reasons for the remaining ones to be also exponentially small.

To the family of quasimodes $( f_{j}^{(1)})_{j}$, we now associate a family of functions in $E^{(1)}$, the eigenspace associated to $\ooo ( h^{2} )$ eigenvalues for ${P}^{(1)}$. Thanks to the spectral properties of the selfadjoint operator $P^{(1)}$, its spectral projector onto $E^{(1)}$ is given by
\begin{equation} \label{d11}
\Pi^{(1)} = \frac{1}{2 \pi i} \int_\gamma ( z - P^{(1)} )^{- 1} d z ,
\end{equation}
where $\gamma = \partial B ( 0 , \varepsilon_1 h / 2)$ where $\varepsilon_1$ is defined in Proposition \ref{b33}. In the sequel, we denote
\begin{equation*}
e_j^{(1)} = \Pi^{(1)} ( f_{j}^{(1)} ) .
\end{equation*}
Mimicking the proof of Lemma \ref{d5}, one can show that the family $(e_j^{(1)})_{j}$ satisfies the following estimates:

\begin{lemma}\sl \label{d6}
The system $( e_j^{(1)} )_{j}$ is free and spans $E^{(1)}$. Besides, we have
\begin{equation*}
e_j^{(1)} = f_j^{(1)} + \ooo ( h ) , \qquad \text{and} \qquad \big\< e_{j}^{(1)} , e_{j '}^{(1)} \big\> = \delta_{j , j '} + \ooo ( h ) .
\end{equation*}
\end{lemma}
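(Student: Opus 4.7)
The plan is to follow exactly the strategy of Lemma \ref{d5}, but taking into account that the quasimodes on $1$-forms only satisfy $P^{(1)} f_j^{(1)} = \ooo(h^2)$ instead of an exponentially small bound. This will degrade the error from $\ooo(e^{-\alpha/h})$ to $\ooo(h)$, which is exactly what is claimed.

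First, I would write, using the Cauchy formula together with \eqref{d11} and $f_j^{(1)} = \frac{1}{2\pi i}\int_\gamma z^{-1} f_j^{(1)}\, dz$,
\begin{equation*}
e_j^{(1)} - f_j^{(1)} = \frac{1}{2\pi i} \int_\gamma (z - P^{(1)})^{-1} z^{-1} P^{(1)} f_j^{(1)} \, dz .
\end{equation*}
On the contour $\gamma = \partial B(0, \varepsilon_1 h/2)$, one has $|z^{-1}| = \ooo(h^{-1})$ and, thanks to Proposition \ref{b33} and the selfadjointness of $P^{(1)}$, the spectrum of $P^{(1)}$ lies either in a $\ooo(h^2)$-neighborhood of $0$ or above $\varepsilon_1 h$, so that $\dist(z, \sigma(P^{(1)})) \gtrsim h$ uniformly on $\gamma$ and thus $\Vert (z - P^{(1)})^{-1}\Vert = \ooo(h^{-1})$. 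Combining these estimates with the last bound in Lemma \ref{d23} and the fact that the length of $\gamma$ is $\ooo(h)$, I obtain
\begin{equation*}
\Vert e_j^{(1)} - f_j^{(1)} \Vert \leq \ooo(h) \cdot \ooo(h^{-1}) \cdot \ooo(h^{-1}) \cdot \ooo(h^2) = \ooo(h) ,
\end{equation*}
which is the first stated estimate.

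From this and the almost-orthonormality $\langle f_j^{(1)}, f_{j'}^{(1)}\rangle = \delta_{j,j'} + \ooo(h)$ from Lemma \ref{d23}, the Gram matrix estimate follows by bilinearity and the fact that $\Vert f_j^{(1)}\Vert = 1 + \ooo(h)$:
\begin{equation*}
\langle e_j^{(1)}, e_{j'}^{(1)} \rangle = \langle f_j^{(1)}, f_{j'}^{(1)} \rangle + \ooo(h) = \delta_{j,j'} + \ooo(h).
\end{equation*}
Finally, the near-identity Gram matrix is invertible for $h$ small enough, so the family $(e_j^{(1)})_{j=2,\ldots,n_1+1}$ is free; since its $n_1$ vectors lie by construction in $E^{(1)}$ and $\dim E^{(1)} = n_1$ by Proposition \ref{b33}, they span $E^{(1)}$.

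There is no real obstacle here beyond bookkeeping of powers of $h$; the only point deserving attention is the resolvent bound $\Vert(z-P^{(1)})^{-1}\Vert = \ooo(h^{-1})$ on $\gamma$, which crucially uses that Proposition \ref{b33} leaves a spectral gap of order $h$ between the $n_1$ small eigenvalues and the rest of the spectrum, so that the contour at radius $\varepsilon_1 h/2$ stays at distance $\gtrsim h$ from both clusters.
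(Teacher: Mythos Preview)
Your proof is correct and is exactly the argument the paper has in mind: the paper simply says to mimic the proof of Lemma \ref{d5}, and your write-up does precisely this, with the $\ooo(e^{-\alpha/h})$ quasimode bound replaced by the $\ooo(h^2)$ bound from Lemma \ref{d23}, yielding the $\ooo(h)$ error after the same resolvent/contour bookkeeping.
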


Thanks to the preceding lemmas, the families $( e_{k}^{(0)} )_{k}$ and $( e_{j}^{(1)} )_{j}$ are orthonormal, apart from an $\ooo ( h )$ factor. For computing accurately the eigenvalues of $P^{(0)}$ and prove the main theorem, we need more precise estimates of exponential type. For this, we will use the intertwining relation $L_{\phi} P^{(0)} = P^{(1)} L_{\phi}$.

More precisely, we denote by $L$ the $n_{1} \times n_{0}$ matrix of this restriction of $L_\phi$ with respect to the basis $( e_j^{(1)} )_{j}$ and $( e_k^{(0)} )_{k}$:
\begin{equation}
L_{j , k} : = \big\< e_{j}^{(1)} , L_{\phi} e_{k}^{(0)} \big\> .
\end{equation}
The classical way (\cite{HeKlNi04_01}, \cite{HeSj85_01}, \ldots) of computing the exponentially small eigenvalues of $P^{(0)}$ is then to compute accurately the  singular values of $L$.
For this we first state a refined lemma about exponential estimates.

\begin{lemma}\sl \label{d7}
There exist $\alpha > 0$ independent of $\varepsilon$ such that
\begin{equation} \label{d32}
L_{\phi} L_{\phi}^{*} f_j^{(1)} = \ooo ( e^{- \alpha / h} ) ,
\end{equation}
and also a smooth $1$-form $r_j^{(1)}$ such that
\begin{equation*}
L_{\phi}^{*} \big( e_{j}^{(1)} - f_{j}^{(1)} \big) = L_{\phi}^{*} r_{j}^{(1)} \qquad \text{and} \qquad r_{j}^{(1)} = \ooo ( e^{- \alpha / h} ) .
\end{equation*}
\end{lemma}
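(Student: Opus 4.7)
The plan is to reduce everything to the exponential smallness of $d_{\phi,h}^* f_j^{W,(1)}$. From the Hodge decomposition $P^{W,(1)} = d_{\phi,h}d_{\phi,h}^* + d_{\phi,h}^* d_{\phi,h}$ and \eqref{b37}, we get
\begin{equation*}
\Vert d_{\phi,h}^* f_j^{W,(1)} \Vert^{2} + \Vert d_{\phi,h} f_j^{W,(1)} \Vert^{2} = \big\< P^{W,(1)} f_j^{W,(1)} , f_j^{W,(1)} \big\> \leq \Vert P^{W,(1)} f_j^{W,(1)} \Vert \Vert f_j^{W,(1)} \Vert = \ooo ( e^{-\alpha/h}) ,
\end{equation*}
so $d_{\phi,h}^* f_j^{W,(1)} = \ooo ( e^{-\alpha/h} )$ (up to dividing $\alpha$ by $2$). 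Then the definition \eqref{d13} together with $L_\phi = Q d_{\phi,h} a_h$ gives $L_\phi^* f_j^{(1)} = \beta_d^{1/2} a_h d_{\phi,h}^* f_j^{W,(1)} = \ooo ( e^{-\alpha/h} )$, since the $Q^*$ and $(Q^*)^{-1}$ cancel and $a_h \in S^{0}(1)$.

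For \eqref{d32}, I will use the selfadjoint trick: $\Vert L_\phi L_\phi^* f_j^{(1)} \Vert^2 = \< L_\phi^* L_\phi L_\phi^* f_j^{(1)} , L_\phi^* f_j^{(1)} \> = \< P^{(0)} L_\phi^* f_j^{(1)} , L_\phi^* f_j^{(1)} \> \leq \Vert P^{(0)} \Vert \Vert L_\phi^* f_j^{(1)} \Vert^2 = \ooo ( e^{-2\alpha/h} )$, where $P^{(0)}$ is bounded by Proposition \ref{a47}. This immediately yields \eqref{d32}.

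For the second statement, I set $w_j^{(1)} := L_\phi L_\phi^* f_j^{(1)} = \ooo ( e^{-\alpha/h} )$ and define
\begin{equation*}
r_j^{(1)} := \frac{1}{2 \pi i} \int_\gamma z^{-1} (z - P^{(1)} )^{-1} w_j^{(1)} \, d z ,
\end{equation*}
with $\gamma$ as in \eqref{d11}. The adjoint of the intertwining \eqref{d8} gives $L_\phi^* P^{(1)} = P^{(0)} L_\phi^*$, hence $L_\phi^* (z - P^{(1)})^{-1} = (z - P^{(0)})^{-1} L_\phi^*$ whenever both resolvents make sense. Applying $L_\phi^*$ to $r_j^{(1)}$, pulling it under the integral, using this intertwining together with $L_\phi^* w_j^{(1)} = P^{(0)} L_\phi^* f_j^{(1)}$, and the identity $z^{-1}(z - P^{(0)})^{-1} P^{(0)} = -z^{-1} + (z - P^{(0)})^{-1}$, Cauchy's formula then yields
\begin{equation*}
L_\phi^* r_j^{(1)} = \big( \Pi^{(0)} - \Id \big) L_\phi^* f_j^{(1)} = L_\phi^* \big( e_j^{(1)} - f_j^{(1)} \big) ,
\end{equation*}
using the spectral intertwining $L_\phi^* \Pi^{(1)} = \Pi^{(0)} L_\phi^*$. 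The norm estimate then follows from Proposition \ref{b33}: on $\gamma$, $|z| \sim h$ and $\operatorname{dist}(z, \sigma(P^{(1)})) \gtrsim h$ so $\Vert (z - P^{(1)})^{-1} \Vert = \ooo(h^{-1})$, giving $\Vert r_j^{(1)} \Vert = \ooo ( h^{-1} e^{-\alpha/h} ) = \ooo(e^{-\alpha/h})$ after shrinking $\alpha$.

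The main delicacy is justifying the resolvent intertwining on $\gamma$: we need $\gamma$ to be simultaneously a good contour for $P^{(1)}$ (which it is by definition) and to avoid $\sigma(P^{(0)})$. Shrinking $\varepsilon_1$ if necessary so that $\varepsilon_1 < \varepsilon_0$, Proposition \ref{b32} ensures that $\gamma$ encloses only the exponentially small eigenvalues of $P^{(0)}$ and that $\Vert (z - P^{(0)})^{-1} \Vert = \ooo(h^{-1})$ on $\gamma$, so all manipulations are legitimate. The rest is routine bookkeeping with Cauchy integrals.
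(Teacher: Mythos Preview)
Your proof is correct and follows essentially the same route as the paper: both reduce to the exponential smallness of $d_{\phi,h}^* f_j^{W,(1)}$ via \eqref{b37}, define the same $r_j^{(1)}$ as a Cauchy integral of $(z-P^{(1)})^{-1}z^{-1}$ applied to $L_\phi L_\phi^* f_j^{(1)}$, and use the resolvent intertwining $(z-P^{(0)})^{-1}L_\phi^* = L_\phi^*(z-P^{(1)})^{-1}$ on a contour chosen small enough to work for both operators. The only cosmetic difference is that your ``selfadjoint trick'' for \eqref{d32} is a detour: once you have $L_\phi^* f_j^{(1)} = \ooo(e^{-\alpha/h})$, boundedness of $L_\phi$ (which the paper also invokes) gives \eqref{d32} directly.
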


\begin{proof}
We first note that
\begin{align}
L_{\phi} L_{\phi}^{*} f_{j}^{(1)} &=  L_{\phi} a_h d_{\phi , h}^{*} Q^{*} ( Q^{*} )^{-1} f_{j}^{W , (1)}    \nonumber  \\
&= L_{\phi} a_{h} \big( d_{\phi , h}^{*} f_{j}^{W , (1)} \big) . \label{d33}
\end{align}
On the other hand, \eqref{d27} and \eqref{b37} give
\begin{equation*}
\big\Vert d_{\phi , h}^{*} f_{j}^{W , (1)} \big\Vert^{2} \leq \big\Vert d_{\phi , h}^{*} f_{j}^{W , (1)} \big\Vert^{2} + \big\Vert d_{\phi , h} f_{j}^{W , (1)} \big\Vert^{2} = \big\< P^{(1)} f_{j}^{W , (1)} , f_{j}^{W , (1)} \big\> = \ooo ( e^{- \alpha / h} ) .
\end{equation*}
for some $\alpha > 0$ independent of $\varepsilon$. Since $a_{h}$ and $L_{\phi}$ are uniformly bounded operators, \eqref{d33} provides the required estimate.

Now we show the second and third equalities, following closely the proof of Lemma \ref{d5}. Using \eqref{d4}, the intertwining relation (see Lemma \ref{b23}) and the Cauchy formula, we have
\begin{align}
L_{\phi}^{*} \big( e_{j}^{(1)} - f_{j}^{(1)} \big) &= L_{\phi}^{*} \Pi^{(1)} f_{j}^{(1)} -  L_{\phi}^{*} f_{j}^{(1)}   \nonumber \\
&= \Pi^{(0)} L_{\phi}^{*} f_{j}^{(1)} - L_{\phi}^{*} f_{j}^{(1)}   \nonumber \\
&= \frac{1}{2 \pi i} \int_{\gamma} ( z - P^{(0)} )^{- 1} L_{\phi}^{*} f_{j}^{(1)} d z - \frac{1}{2 \pi i} \int_{\gamma} z^{-1} L_{\phi}^{*} f_{j}^{(1)} d z  \nonumber \\
 & = \frac{1}{2 \pi i } \int_{\gamma} (z - P^{(0)})^{-1} z^{-1} P^{(0)} L_{\phi}^{*} f_{j}^{(1)} d z ,
\end{align}
where $\gamma = \partial B ( 0 , \min ( \varepsilon_{0} , \varepsilon_{1} ) h / 2 )$. Using again Lemma \ref{b23}, it becomes
\begin{align*}
L_{\phi}^{*} \big( e_{j}^{(1)} - f_{j}^{(1)} \big) &= \frac{1}{2 \pi i} \int_{\gamma} ( z - P^{(0)} )^{- 1} z^{- 1} L_{\phi}^{*} L_{\phi} L_{\phi}^{*} f_{j}^{(1)} d z    \nonumber \\
&= L_{\phi}^{*} \frac{1}{2 \pi i} \int_{\gamma} (z - P^{(1)} )^{- 1} z^{-1}  L_{\phi} L_{\phi}^{*} f_{j}^{(1)} d z .
\end{align*}
We then pose
\begin{equation}
r_{j}^{(1)} = \Big( \frac{1}{2 \pi i} \int_{\gamma} ( z - P^{(1)} )^{- 1} z^{-1} d z \Big) L_{\phi} L_{\phi}^{*} f_{j}^{(1)} ,
\end{equation}
and the preceding equality reads
\begin{equation}
L_{\phi}^{*} \big( e_{j}^{(1)} - f_{j}^{(1)} \big) =  L_{\phi}^{*} r_{j}^{(1)} .
\end{equation}
Moreover, as in proof of Lemma \ref{d5}, we have
\begin{equation*}
\frac{1}{2 \pi i} \int_{\gamma} ( z - P^{(1)} )^{- 1} z^{- 1} d z = \ooo ( h^{-1} ) .
\end{equation*}
Combining with \eqref{d32}, it shows that $r_{j}^{(1)} = \ooo ( e^{- \alpha / h} )$ for some (new) $\alpha > 0$.
\end{proof}

We begin the study of the matrix $L$ by the following lemma.

\begin{lemma}\sl \label{d14}
There exists $\alpha ' > 0$ such that, if $\varepsilon > 0$ is sufficiently small and fixed, we have, for all $2 \leq j \leq n_{1} + 1$ and $2 \leq k \leq n_{0}$,
\begin{equation*}
L_{j , k} = \big\< f_{j}^{(1)}, L_{\phi} f_{k}^{(0)} \big\> + \ooo \big( e^{- ( S_{k} + \alpha ' ) / h} \big) .
\end{equation*}
Moreover, $L_{j , 1} = 0$ for all $2 \leq j \leq n_{1} + 1$.
\end{lemma}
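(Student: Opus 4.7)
The plan is to derive both claims from the intertwining relation $L_\phi P^{(0)} = P^{(1)} L_\phi$ of Lemma \ref{b23} combined with the resolvent representations of $\Pi^{(0)}$ and $\Pi^{(1)}$. After possibly replacing $\varepsilon_0, \varepsilon_1$ by their minimum so that a common contour $\gamma$ encloses exactly the small eigenvalues of both $P^{(0)}$ and $P^{(1)}$, the resolvent identity $L_\phi (z - P^{(0)})^{-1} = (z - P^{(1)})^{-1} L_\phi$ integrates to
\begin{equation*}
L_\phi \Pi^{(0)} = \Pi^{(1)} L_\phi .
\end{equation*}
Using this, together with the self-adjointness of $\Pi^{(1)}$ and $\Pi^{(1)} e_j^{(1)} = e_j^{(1)}$, one obtains
\begin{equation*}
L_{j,k} = \big\< e_j^{(1)} , L_\phi \Pi^{(0)} f_k^{(0)} \big\> = \big\< e_j^{(1)} , \Pi^{(1)} L_\phi f_k^{(0)} \big\> = \big\< e_j^{(1)} , L_\phi f_k^{(0)} \big\>.
\end{equation*}

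Next I would replace $e_j^{(1)}$ by $f_j^{(1)}$ via Lemma \ref{d6}, paying a price of $\ooo(h) \Vert L_\phi f_k^{(0)} \Vert$. It then remains to estimate $L_\phi f_k^{(0)}$ exponentially. Since $f_k^{(0)} = a_h^{-1} f_k^{W,(0)}$ and $L_\phi = Q d_{\phi,h} a_h$, the identity $d_{\phi,h}(\chi e^{-\phi/h}) = h e^{-\phi/h} \nabla \chi$ yields
\begin{equation*}
L_\phi f_k^{(0)} = h \, b_k^{(0)}(h) \, Q \bigl( (\nabla \chi_{k,\varepsilon}) e^{-(\phi - \phi(\m_k))/h} \bigr).
\end{equation*}
The key point is the support property of $\nabla \chi_{k,\varepsilon}$: by the construction of the cutoffs detailed in the appendix (following \cite{HeKlNi04_01,HeHiSj11_01}), for $k \geq 2$ and $\varepsilon$ small enough, one has $\phi - \phi(\m_k) \geq S_k + 2\alpha''$ on $\supp(\nabla \chi_{k,\varepsilon})$ for some $\alpha'' > 0$. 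Combined with the $L^2$-boundedness of $Q$, this gives $\Vert L_\phi f_k^{(0)} \Vert = \ooo(e^{-(S_k + \alpha'')/h})$, and altogether
\begin{equation*}
L_{j,k} = \big\< f_j^{(1)}, L_\phi f_k^{(0)} \big\> + \ooo\bigl( e^{-(S_k + \alpha')/h} \bigr),
\end{equation*}
for a slightly smaller $\alpha' > 0$.

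Finally, for the exact identity $L_{j,1} = 0$ I would use the genuine zero eigenvalue of $P^{(0)}$: Proposition \ref{a47} (transported through $\U_h$) shows that $\mmm_h^{1/2}$ lies in $E^{(0)}$ and spans $\ker P^{(0)}$, and since $P^{(0)} = L_\phi^* L_\phi$ this forces $L_\phi \mmm_h^{1/2} = 0$. Because $\m_1$ is paired with the fictive saddle $\s_1 = +\infty$ in the labeling, it is natural to choose $e_1^{(0)}$ proportional to $\mmm_h^{1/2}$, which is licit since $\mmm_h^{1/2} \in E^{(0)}$; with this convention $L_\phi e_1^{(0)} = 0$ exactly, and hence $L_{j,1} = \< e_j^{(1)}, L_\phi e_1^{(0)} \> = 0$ for every $j \geq 2$. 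The main technical obstacle in the argument above is the exponential estimate on $L_\phi f_k^{(0)}$, which is precisely why the construction of the $\chi_{k,\varepsilon}$ must use the large, level-set-based supports from the appendix rather than the simpler bump functions sufficient for Lemma \ref{d2}.
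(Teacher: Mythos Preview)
Your reduction to $L_{j,k} = \langle e_j^{(1)}, L_\phi f_k^{(0)} \rangle$ via the intertwining relation is exactly what the paper does, and your treatment of $k=1$ is fine (in fact $f_1^{(0)}$ is already proportional to $\mmm_h^{1/2}$, since $\chi_{1,\varepsilon}=1$, so $e_1^{(0)}=\Pi^{(0)}f_1^{(0)}=f_1^{(0)}$ automatically and no convention is needed).

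The genuine gap is the claimed lower bound $\phi-\phi(\m_k)\geq S_k+2\alpha''$ on $\supp(\nabla\chi_{k,\varepsilon})$. This is false: part of $\supp(\nabla\chi_{k,\varepsilon})$ lies near $\partial B(\s_k,\varepsilon)\cap E_k$, where $\phi<\phi(\s_k)$ (quadratically in $\varepsilon$). The correct bound, stated in Lemma~\ref{e7}, is only $\phi-\phi(\m_k)\geq S_k-C\varepsilon$, so that $\Vert L_\phi f_k^{(0)}\Vert=\ooo(e^{-(S_k-C\varepsilon)/h})$ and no better. Combined with the $\ooo(h)$ from Lemma~\ref{d6}, your error term is $\ooo(h\,e^{-(S_k-C\varepsilon)/h})$, which for fixed $\varepsilon>0$ is \emph{not} $\ooo(e^{-(S_k+\alpha')/h})$ for any $\alpha'>0$: a polynomial prefactor cannot recover the loss $C\varepsilon$ in the exponent.

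The paper closes this gap by invoking Lemma~\ref{d7} instead of Lemma~\ref{d6}: one writes
\[
\big\langle e_j^{(1)}-f_j^{(1)},\,L_\phi f_k^{(0)}\big\rangle
=\big\langle L_\phi^*(e_j^{(1)}-f_j^{(1)}),\,f_k^{(0)}\big\rangle
=\big\langle L_\phi^* r_j^{(1)},\,f_k^{(0)}\big\rangle
=\big\langle r_j^{(1)},\,L_\phi f_k^{(0)}\big\rangle,
\]
with $r_j^{(1)}=\ooo(e^{-\alpha/h})$ and $\alpha>0$ independent of $\varepsilon$. The point of Lemma~\ref{d7} is precisely that while $e_j^{(1)}-f_j^{(1)}$ itself is only $\ooo(h)$, after applying $L_\phi^*$ one can trade it for an exponentially small $r_j^{(1)}$ (this uses $L_\phi L_\phi^* f_j^{(1)}=\ooo(e^{-\alpha/h})$, coming from $d_{\phi,h}^* f_j^{W,(1)}=\ooo(e^{-\alpha/h})$). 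Then the product estimate gives $\ooo(e^{-(S_k+\alpha-C\varepsilon)/h})$, and choosing $\varepsilon$ small enough so that $\alpha-C\varepsilon>0$ yields the desired $\alpha'$.
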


\begin{proof}
We first treat the case $k = 1$. Since $f_{1}^{(0)}$ is collinear to $a_{h}^{- 1} e^{- \phi / h}$, it belongs to $\ker ( P^{(0)} )$. Then, $e_{1}^{(0)} = \Pi^{(0)} f_{1}^{(0)} = f_{1}^{(0)}$ satisfies $L_{\phi} e_{1}^{(0)} = 0$ from Lemma \ref{b23}. In particular, $L_{j , 1} = 0$ for all $2 \leq j \leq n_{1} + 1$.

We now assume $2 \leq k \leq n_{0}$. Using Lemma \ref{b23} and the definition of $e^{( \star )}_{\bullet}$, we can write
\begin{align*}
L_{j , k} & = \big\< e_{j}^{(1)}, L_{\phi} e_{k}^{(0)} \big\> = \big\< e_{j}^{(1)}, L_{\phi} \Pi^{(0)} f_{k}^{(0)} \big\> = \big\< e_{j}^{(1)}, \Pi^{(1)} L_{\phi} f_{k}^{(0)} \big\>   \\
&= \big\< \Pi^{(1)} e_{j}^{(1)}, L_{\phi} f_{k}^{(0)} \big\> = \big\< e_{j}^{(1)}, L_{\phi} f_{k}^{(0)} \big\> = \big\< f_{j}^{(1)}, L_{\phi} f_{k}^{(0)} \big\> + \big\< e_{j}^{(1)}- f_{j}^{(1)}, L_{\phi} f_{k}^{(0)} \big\>   \\
&= \big\< f_{j}^{(1)}, L_{\phi} f_{k}^{(0)} \big\> + \big\< L_{\phi}^{*} (e_{j}^{(1)}- f_{j}^{(1)}) , f_{k}^{(0)} \big\> .
\end{align*}
From Lemma \ref{d7}, it becomes
\begin{align}
L_{j,k} &= \big\< f_{j}^{(1)}, L_{\phi} f_{k}^{(0)} \big\> + \big\< L_{\phi}^{*} r_{j}^{(1)} , f_{k}^{(0)}\big\>    \nonumber \\
&= \big\< f_{j}^{(1)}, L_{\phi} f_{k}^{(0)} \big\> + \big\< r_{j}^{(1)} , L_{\phi} f_{k}^{(0)} \big\> . \label{d30}
\end{align}
Now, since $Q$ is bounded and according to Lemma \ref{e7}, we have
\begin{equation*}
L_{\phi} f_{k}^{(0)} = Q  \d_{\phi,h}f^{W,(0)}= \ooo \big( e^{- ( S_{k} - C \varepsilon ) / h } \big).
\end{equation*}
Using Lemma \ref{d7} again, it yields
\begin{equation} \label{d31}
\big\< r_{j}^{(1)} , L_{\phi} f_{k}^{(0)} \big\> = \ooo \big( e^{- ( S_{k} + \alpha - C \varepsilon ) / h } \big) ,
\end{equation}
with $\alpha > 0$ independent of $\varepsilon$. Eventually, taking $\varepsilon > 0$ small enough, the lemma follows from \eqref{d30} and \eqref{d31}.
\end{proof}

Now we just recall the explicit computation of the matrix $L$. This is just a consequence of the study of the corresponding Witten Laplacian.

\begin{lemma}\sl \label{d12}
For all $2 \leq j \leq n_{1} + 1$ and $2 \leq k \leq n_{0}$, we have
\begin{equation*}
L_{k , k} = \Big( \frac{h}{( 2 d + 4) \pi} \Big)^{1 / 2} \mu_{k}^{1 / 2} \bigg\vert \frac{\det \phi '' ( \m_{k} )}{\det \phi '' ( \s_{k} )} \bigg\vert^{1 / 4} e^{- S_{k} / h} ( 1 + \ooo ( h ) ) = : h^{1 / 2} \ell_{k} ( h ) e^{- S_{k} / h} ,
\end{equation*}
and
\begin{equation*}
\forall j \neq k , \qquad  L_{j , k} = \ooo \big( e^{- ( S_{k} + \alpha ' ) / h} \big) ,
\end{equation*}
where $S_{k} : = \phi ( \s_{k} ) - \phi ( \m_{k} )$ and $- \mu_{k}$ denotes the unique negative eigenvalue of $\phi ''$ at $\s_{k}$.
\end{lemma}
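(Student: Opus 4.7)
The plan is to reduce the computation of $L_{j,k}$ to the analogous quantity for the standard Witten Laplacian and then invoke directly the explicit formulas proved in \cite{HeKlNi04_01}.

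First, by Lemma \ref{d14} it suffices to evaluate $\langle f_j^{(1)}, L_\phi f_k^{(0)}\rangle$ modulo a remainder of order $\ooo(e^{-(S_k + \alpha')/h})$. I will substitute the definitions of the quasimodes: by \eqref{b35} one has $f_k^{(0)} = a_h^{-1} f_k^{W,(0)}$, and the factorization $L_\phi = Q d_{\phi,h} a_h$ from Corollary \ref{e8} gives
\begin{equation*}
L_\phi f_k^{(0)} = Q d_{\phi,h} f_k^{W,(0)}.
\end{equation*}
On the $1$-form side, \eqref{d13} reads $f_j^{(1)} = \beta_d^{1/2}(Q^*)^{-1} f_j^{W,(1)}$, so taking the adjoint and cancelling $Q^{-1}Q$ yields the clean identity
\begin{equation*}
\big\langle f_j^{(1)}, L_\phi f_k^{(0)} \big\rangle = \beta_d^{1/2} \big\langle f_j^{W,(1)}, d_{\phi,h} f_k^{W,(0)} \big\rangle.
\end{equation*}
This reduction is the heart of the proof: it shows that the matrix $L$ is, up to the scalar factor $\beta_d^{1/2} = 1/\sqrt{2d+4}$ and negligible errors, exactly the Witten-Laplacian matrix whose asymptotics are known.

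Next, I invoke the computation of $\langle f_k^{W,(1)}, d_{\phi,h} f_k^{W,(0)}\rangle$ carried out in Section 4 of \cite{HeKlNi04_01} (as completed in \cite{HeHiSj11_01}). Under Hypothesis \ref{h2} with the labeling recalled in the appendix (pairing $\m_k$ with a unique saddle $\s_k$ through the Arrhenius number $S_k = \phi(\s_k) - \phi(\m_k)$), their computation via Laplace's method applied to the WKB expressions \eqref{d22} of $f_j^{W,(1)}$ and the Gaussian $f_k^{W,(0)}$ yields the diagonal entry
\begin{equation*}
\big\langle f_k^{W,(1)}, d_{\phi,h} f_k^{W,(0)} \big\rangle = \Big(\frac{h\mu_k}{\pi}\Big)^{1/2} \bigg| \frac{\det \phi''(\m_k)}{\det \phi''(\s_k)} \bigg|^{1/4} e^{-S_k/h}(1 + \ooo(h)).
\end{equation*}
Multiplying by $\beta_d^{1/2}$ gives exactly the stated value of $L_{k,k}$.

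For the off-diagonal entries $L_{j,k}$ with $j \neq k$, the same computation in \cite{HeKlNi04_01} shows that either $\supp(f_j^{W,(1)}) \cap \{\phi \leq \phi(\s_k)\}$ avoids the well of $\m_k$, or the Laplace integral picks up a phase $\phi(\s_j) - \phi(\m_k) > S_k$, so there exists $\alpha > 0$ such that $\langle f_j^{W,(1)}, d_{\phi,h} f_k^{W,(0)}\rangle = \ooo(e^{-(S_k+\alpha)/h})$; this is where the non-resonance Hypothesis \ref{h2} and the careful labeling are essential. Combined with the error term from Lemma \ref{d14}, this gives $L_{j,k} = \ooo(e^{-(S_k+\alpha')/h})$ for some $\alpha' > 0$, finishing the proof.

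The main obstacle I anticipate is ensuring that the algebraic manipulation $\langle (Q^*)^{-1} f_j^{W,(1)}, Q d_{\phi,h} f_k^{W,(0)}\rangle = \langle f_j^{W,(1)}, d_{\phi,h} f_k^{W,(0)}\rangle$ is legitimate, i.e. that no extra remainder appears because $Q$ and $Q^{-1}$ are pseudodifferential operators rather than exact inverses at the symbol level. Since this identity holds rigorously as bounded operators (Corollary \ref{e8} gives $F^{-1} \in \Psi^0(1)$ so $Q^{-1}$ is a genuine bounded operator with $Q^{-1}Q = \Id$), this reduction is exact and no additional pseudodifferential error enters, so invoking \cite{HeKlNi04_01} gives the result directly.
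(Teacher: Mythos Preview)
Your proof is correct and follows essentially the same approach as the paper: reduce $\langle f_j^{(1)}, L_\phi f_k^{(0)}\rangle$ to $\beta_d^{1/2}\langle f_j^{W,(1)}, d_{\phi,h} f_k^{W,(0)}\rangle$ via the exact algebraic cancellation $Q^{-1}Q=\Id$, then invoke Lemma~\ref{d14} and the known Witten-Laplacian computation (Proposition~6.4 of \cite{HeKlNi04_01}). The paper's proof is more terse but identical in substance, and your remark that the cancellation is exact (not merely modulo a pseudodifferential remainder) is the right justification.
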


\begin{proof}
First, we note that
\begin{equation*}
\big\< f_{j}^{(1)} , L_{\phi} f_{k}^{(0)} \big\> = \beta_d^{1 / 2} \big\< f_{j}^{W , (1)} , d_{\phi , h} f_{k}^{W , (0)} \big\> ,
\end{equation*}
by \eqref{b35}, \eqref{d13} and $L_{\phi} = Q d_{\phi , h} a_{h}$. Thus, Lemma \ref{d14} implies
\begin{equation*}
L_{j , k} = \beta_{d}^{1 / 2} \big\< f_{j}^{W , (1)} , d_{\phi , h} f_{k}^{W , (0)} \big\> + \ooo \big( e^{- ( S_{k} + \alpha ' ) / h} \big) .
\end{equation*}
The first term is exactly the approximate singular value of $d_{\phi , h}$ computed in \cite{HeKlNi04_01}. The result is then a direct consequence of Proposition 6.4 of \cite{HeKlNi04_01}.
\end{proof}

Now we are able to compute the singular values of $L$ (i.e. the eigenvalues of $( L^{*} L )^{1 / 2}$).

\begin{lemma}\sl \label{d15}
There exists $\alpha ' > 0$ such that the singular values $\nu_{k} ( L )$ of $L$, enumerated in a a suitable order, satisfy
\begin{equation*}
\forall 1 \leq k \leq n_{0} , \qquad \nu_{k} ( L ) = \vert L_{k , k} \vert \big( 1 + \ooo ( e^{- \alpha ' / h} ) \big) .
\end{equation*}
\end{lemma}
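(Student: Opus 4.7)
The plan is to reduce the computation of singular values to a relative (multiplicative) perturbation problem on a diagonal matrix, leveraging the fact that $L_{j,k}$ for $j\neq k$ is not only smaller than $L_{k,k}$ but actually geometrically small compared to $\sqrt{|L_{k,k}||L_{k',k'}|}$ jointly with any other column. First, since $L_{j,1}=0$ for every $j$ by Lemma \ref{d14}, the matrix $L$ has a trivial singular value $0 = |L_{1,1}|$, and it suffices to study the restricted $n_1\times(n_0-1)$ matrix $\tilde{L} = (L_{j,k})_{j=2,\ldots,n_1+1,\,k=2,\ldots,n_0}$ whose $n_0-1$ singular values will be matched with the nonzero $|L_{k,k}|$'s.

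Second, I would form the Hermitian positive matrix $M := \tilde{L}^{*}\tilde{L}$ of size $(n_0-1)\times(n_0-1)$ and compute its entries using Lemma \ref{d12}. The bound $|L_{k,k}| \asymp h^{1/2}e^{-S_k/h}$ combined with $|L_{j,k}| = \ooo(e^{-(S_k+\alpha')/h})$ for $j\neq k$ yields, after separating the dominant cross terms $\overline{L_{k,k}}L_{k,k'}$ and $\overline{L_{k',k}}L_{k',k'}$ from the rest,
\begin{equation*}
M_{k,k} = |L_{k,k}|^2 \bigl(1 + \ooo(h^{-1}e^{-2\alpha'/h})\bigr), \qquad M_{k,k'} = \ooo\bigl(h^{1/2} e^{-(S_k+S_{k'}+\alpha')/h}\bigr) \text{ for } k\neq k'.
\end{equation*}
Setting $D := \mathrm{diag}(|L_{k,k}|)_{k=2,\ldots,n_0}$, these estimates rewrite as $M = D(I+N)D$ with $N$ Hermitian and, crucially,
\begin{equation*}
\|N\| = \ooo(h^{-1/2}e^{-\alpha'/h}),
\end{equation*}
because on dividing $M_{k,k'}$ by $|L_{k,k}||L_{k',k'}| \asymp h\, e^{-(S_k+S_{k'})/h}$ all dependence on $S_k, S_{k'}$ cancels. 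This is exactly the ``graded'' form required by Demmel--Veseli\'c type relative perturbation theory.

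Third, the relative perturbation statement that I need is elementary: for any unit vector $v$,
\begin{equation*}
(1-\|N\|)\,\langle D^{2}v,v\rangle \;\leq\; \langle Mv,v\rangle \;=\; \langle (I+N)Dv,Dv\rangle \;\leq\; (1+\|N\|)\,\langle D^{2}v,v\rangle,
\end{equation*}
so that by the Courant--Fischer min-max principle the eigenvalues of $M$ and of $D^{2}$ (enumerated decreasingly) satisfy $\lambda_{k}(M) = \lambda_{k}(D^{2})\bigl(1+\ooo(\|N\|)\bigr)$. Taking square roots gives the singular values of $\tilde{L}$; Hypothesis \ref{h2} (distinctness of the $S_k$'s) ensures that the eigenvalues of $D^{2}$ are exponentially well-separated, so the enumeration of the $\lambda_{k}(M)$'s automatically matches them with the $|L_{k,k}|^{2}$'s via a unique permutation $\sigma$. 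Absorbing the $h^{-1/2}$ factor by shrinking $\alpha'$ slightly then yields $\nu_{k}(L) = |L_{\sigma(k),\sigma(k)}|\bigl(1+\ooo(e^{-\alpha'/h})\bigr)$, which is the content of the lemma after relabeling $\alpha'$. The main work — and the only delicate point — is verifying the bound $\|N\|=\ooo(h^{-1/2}e^{-\alpha'/h})$; everything else is a short diagonal perturbation argument.
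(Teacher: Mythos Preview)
Your argument is correct and rests on the same key observation as the paper: after deleting the first (zero) column, the reduced matrix factors as a diagonal matrix $D=\operatorname{diag}(L_{k,k})$ times something that is the identity plus $\ooo(e^{-\alpha'/h})$ (after absorbing the $h^{-1/2}$). The only difference is in how this factorization is exploited. The paper works directly with the rectangular matrix, writing $L'=(U+\ooo(e^{-\alpha'/h}))D$ where $U=(\delta_{j,k})$, and then applies the Fan inequalities for singular values of a product twice: once to $L'$ for the upper bound, and once to $D=(1+\ooo(e^{-\alpha'/h}))U^*L'$ (using $U^*U=\Id$) for the lower bound. You instead pass to the Gram matrix $M=\tilde L^*\tilde L=D(I+N)D$ and use the Courant--Fischer min-max principle on the quadratic form. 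Both routes are short and standard; the paper's avoids squaring and is marginally more direct, while yours makes the relative-perturbation structure (and the cancellation of the $S_k$'s in $N_{k,k'}$) more explicit. Your remark that Hypothesis~\ref{h2} pins down the permutation is not actually needed for the statement as written, since ``a suitable order'' already allows relabeling.
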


\begin{proof}
Since the first column of $L$ consists of zeros, we get $\nu_{1} = 0$. Moreover, the other singular values of $L$ are those of the reduced matrix $L '$ with entries $L_{j , k} ' = L_{j + 1 , k + 1}$ for $1 \leq j \leq n_{1}$ and $1 \leq k \leq n_{0} - 1$. We shall now use that the dominant term in each column of $L '$ lies on the diagonal. Define the $( n_{0} - 1 ) \times ( n_{0} - 1 )$ diagonal matrix $D$ by
\begin{equation*}
D : = \diag \big( L_{k + 1 , k + 1} ; \ k = 1, \ldots , n_{0} - 1 \big) .
\end{equation*}
Notice that $D$ is invertible, thanks to the ellipticity of $\ell_{k + 1} ( h )$, and that $\nu_{k} ( D ) = \vert L_{k + 1 , k + 1} \vert$. We also define the $n_{1} \times ( n_{0} - 1 )$ characteristic matrix of $L '$
\begin{equation*}
U = ( \delta_{ j , k} )_{j , k} .
\end{equation*}
From Lemma \ref{d12}, there is a constant $\alpha ' >0 $ such that
\begin{equation} \label{d17}
L ' = \big( U + \ooo ( e^{- \alpha ' / h } ) \big) D .
\end{equation}
The Fan inequalities (see for example Theorem 1.6 of \cite{Si79_01}) therefore give
\begin{equation} \label{d16}
\nu_{k} ( L ' ) \leq \big( 1 + \ooo( e^{- \alpha ' / h} ) \big) \nu_{k} (D) .
\end{equation}
To get the opposite estimate, we remark that $U^{*} U = \Id_{n_{0} - 1}$. Then, \eqref{d17} implies
\begin{equation*}
D = \big( 1 + \ooo ( e^{- \alpha ' / h} ) \big ) U^{*} L ' ,
\end{equation*}
and, as before,
\begin{equation} \label{d18}
\nu_{k} ( D ) \leq \big( 1 + \ooo ( e^{- \alpha ' / h} ) \big) \nu_{k} ( L ' ) .
\end{equation}
The lemma follows from $\nu_{k + 1} ( L ) = \nu_{k} ( L ' )$, \eqref{d16}, \eqref{d18} and $\nu_{k} ( D ) = \vert L_{k + 1 , k + 1} \vert$.
\end{proof}

Now, Theorem \ref{e3} is a direct consequence of the explicit computations of Lemma \ref{d12} and of the following equivalent formulation.

\begin{lemma}\sl
The non-zero exponentially small eigenvalues of $P_{h}$ are of the form
\begin{equation*}
h \big( \ell_{k}^{2} (h) + \ooo ( h ) \big) e^{- 2 S_{k} / h} ,
\end{equation*}
for $2 \leq k \leq n_{0}$.
\end{lemma}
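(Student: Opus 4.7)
The plan is to reduce the computation of the exponentially small eigenvalues of $P_h = P^{(0)} = L_{\phi}^{*} L_{\phi}$ to the singular values of the matrix $L$ introduced just above, via the supersymmetric intertwining relation $L_{\phi} P^{(0)} = P^{(1)} L_{\phi}$.

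First, by Proposition \ref{b32}, the operator $P^{(0)}$ has exactly $n_{0}$ eigenvalues in a disc of radius $\varepsilon_{0} h / 2$, all of which are actually $\ooo ( e^{- \alpha/h})$, and the remaining spectrum lies above $\varepsilon_{0} h$. So the non-zero exponentially small eigenvalues of $P^{(0)}$ are exactly the non-zero eigenvalues of the restriction of $P^{(0)}$ to $E^{(0)}$. By the intertwining relation of Lemma \ref{b23}, $L_{\phi}$ sends $E^{(0)}$ into $E^{(1)}$: indeed an eigenfunction $u \in E^{(0)}$ satisfies $P^{(1)} ( L_{\phi} u ) = L_{\phi} P^{(0)} u$, and $L_{\phi} u = 0$ if and only if $P^{(0)} u = 0$. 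Consequently the non-zero eigenvalues of $P^{(0)} \vert_{E^{(0)}} = ( L_{\phi} \vert_{E^{(0)}} )^{*} L_{\phi} \vert_{E^{(0)}}$ are precisely the squares of the non-zero singular values of $L_{\phi} : E^{(0)} \longrightarrow E^{(1)}$.

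Next I would express these singular values through the matrix $L_{j , k} = \< e_{j}^{(1)}, L_{\phi} e_{k}^{(0)} \>$. If the bases $( e_{k}^{(0)} )$ and $( e_{j}^{(1)} )$ were truly orthonormal, the eigenvalues of $P^{(0)} \vert_{E^{(0)}}$ would equal the eigenvalues of $L^{*} L$ exactly. Since by Lemmas \ref{d5} and \ref{d6} the corresponding Gram matrices $G^{(0)}$ and $G^{(1)}$ are only $\Id + \ooo (h)$, one first performs a Gram--Schmidt orthonormalisation (or equivalently symmetric orthonormalisation via $( G^{(\bullet)} )^{- 1 / 2}$) to get orthonormal bases $\widetilde{e}_{k}^{(\bullet)}$ of $E^{(\bullet)}$ with $\widetilde{e}_{k}^{(\bullet)} - e_{k}^{(\bullet)} = \ooo (h)$. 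The corresponding matrix $\widetilde{L}_{j , k} = \< \widetilde{e}_{j}^{(1)}, L_{\phi} \widetilde{e}_{k}^{(0)}\>$ then satisfies $\widetilde{L} = ( G^{(1)} )^{- 1 / 2} L ( G^{(0)} )^{- 1 / 2} = L + \ooo (h) L$ and its singular values coincide with those of $L_{\phi} \vert_{E^{(0)}}$. The min-max principle (or the Fan inequalities used already in Lemma \ref{d15}) then gives
\begin{equation*}
\nu_{k} \big( L_{\phi} \vert_{E^{(0)}} \big) = \nu_{k} ( \widetilde{L} ) = \nu_{k} ( L ) \big( 1 + \ooo (h) \big) .
\end{equation*}

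Combining this with Lemma \ref{d15}, and then with the explicit expression of $L_{k , k}$ given in Lemma \ref{d12}, I obtain $\nu_{1} ( L_{\phi} \vert_{E^{(0)}} ) = 0$ and, for $2 \leq k \leq n_{0}$,
\begin{equation*}
\nu_{k} \big( L_{\phi} \vert_{E^{(0)}} \big)^{2} = \vert L_{k , k} \vert^{2} \big( 1 + \ooo (h) \big) = h \ell_{k}^{2} (h) e^{- 2 S_{k}/ h} \big( 1 + \ooo (h) \big) = h \big( \ell_{k}^{2} (h) + \ooo (h) \big) e^{- 2 S_{k} / h} ,
\end{equation*}
which is exactly the announced expression since $\ell_{k} (h)$ is bounded. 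The main (and really only) subtle step is the reduction from the quasi-orthonormal bases to orthonormal ones, so that the singular values of $L$ faithfully represent the eigenvalues of $P^{(0)} \vert_{E^{(0)}}$; once this is done, everything reduces to the already established Lemmas \ref{d12} and \ref{d15}.
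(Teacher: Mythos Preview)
Your proposal is correct and follows essentially the same approach as the paper: orthonormalize the bases $(e_k^{(0)})_k$ and $(e_j^{(1)})_j$ via the square roots of their Gramians, observe that the resulting matrix $\widetilde{L}=(G^{(1)})^{-1/2}L(G^{(0)})^{-1/2}=(1+\ooo(h))L(1+\ooo(h))$, apply the Fan inequalities so that the conclusion of Lemma~\ref{d15} carries over to $\widetilde{L}$, and conclude since the eigenvalues of $P^{(0)}\vert_{E^{(0)}}$ are the eigenvalues of $\widetilde{L}^{*}\widetilde{L}$. Your write-up is in fact slightly more explicit than the paper's in justifying that $L_{\phi}$ maps $E^{(0)}$ into $E^{(1)}$ (so that $\widetilde{L}$ genuinely represents $L_{\phi}\vert_{E^{(0)}}$) and in identifying $P^{(0)}\vert_{E^{(0)}}$ with $(L_{\phi}\vert_{E^{(0)}})^{*}L_{\phi}\vert_{E^{(0)}}$.
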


\begin{proof}
According to Lemma \ref{d5} and Lemma \ref{d6}, the bases $( e_{k}^{(0)} )_{k}$ and $( e_{j}^{(1)} )_{j}$ of $E^{(0)}$ and $E^{(1)}$ respectively are orthonormal up to $\ooo(h)$ small errors. Let $( \widetilde{e}_{k}^{(0)} )_{k}$ and $( \widetilde{e}_{j}^{(1)} )_{j}$ be the corresponding orthonormalizations (obtained by taking square roots of the Gramians), which differ from the original bases by $\ooo(h)$ small recombinations. Then, with respect to the new bases, the matrix of $L_\phi $ take the form $\widetilde{L} = ( 1 + \ooo ( h ) ) ) L ( 1 + \ooo ( h ) )$. Using the Fan inequalities, we see that the conclusion of Lemma \ref{d15} is also valid for $\widetilde{L}$ (note that there is no need to have
exponentially small errors here). Since the matrix of the restriction of $P^{(0)}$ to $E^{(0)}$ with respect to the basis $( \widetilde{e}_{k}^{(0)} )_{k}$ is given by $\widetilde{L}^{*} \widetilde{L}$, the lemma follows.
\end{proof}

We end this part by showing that the mains theorems stated in Section \ref{z6} imply the metastability of the system.

\begin{proof}[Proof of Corollary \ref{z5}]
We first prove \eqref{z3} and \eqref{z1}. If $\phi$ has a unique minimum, Theorem \ref{a1} gives
\begin{equation*}
\big\Vert ( \T_{h}^{\star} )^{n} ( d \nu_{h} ) - d \nu_{h , \infty} \big\Vert_{\hhh_h} \leq ( 1 - \delta h )^{n} \Vert d \nu_{h} \Vert_{\hhh_h} = e^{n \ln ( 1 - \delta h ) + \vert \ln h \vert} h \Vert d \nu_{h} \Vert_{\hhh_h} .
\end{equation*}
Using that $n \ln ( 1 - \delta h ) \sim - \delta h n$, this estimate yields
\begin{equation*}
\big\Vert ( \T_{h}^{\star} )^{n} ( d \nu_{h} ) - d \nu_{h , \infty} \big\Vert_{\hhh_h} \leq h \Vert d \nu_{h} \Vert_{\hhh_h} ,
\end{equation*}
for $n \gtrsim \vert \ln h \vert h^{-1}$. The same way, if $\phi$ has several minima, Theorem \ref{e3} implies
\begin{equation*}
\big\Vert ( \T_{h}^{\star} )^{n} ( d \nu_{h} ) - d \nu_{h , \infty} \big\Vert_{\hhh_h} \leq ( \lambda_{2}^{\star} (h) )^{n} \Vert d \nu_{h} \Vert_{\hhh_h} = e^{n \ln ( \lambda_{2}^{\star} (h) ) + \vert \ln h \vert} h \Vert d \nu_{h} \Vert_{\hhh_h} .
\end{equation*}
Using now $n \ln ( \lambda_{2}^{\star} (h) ) \sim n ( \lambda_{2}^{\star} (h) - 1 ) \sim - C n h e^{- S_{2} / h}$, for some $C > 0$, this estimate yields
\begin{equation*}
\big\Vert ( \T_{h}^{\star} )^{n} ( d \nu_{h} ) - d \nu_{h , \infty} \big\Vert_{\hhh_h} \leq h \Vert d \nu_{h} \Vert_{\hhh_h} ,
\end{equation*}
for $n \gtrsim \vert \ln h \vert h^{-1} e^{S_{2} / h}$.

It remains to show \eqref{z2}. From Theorem \ref{a1}, Theorem \ref{e3} and the proof of \eqref{z3}, we can write
\begin{equation*}
( \T_{h}^{\star} )^{n} ( d \nu_{h} ) =  \sum_{k = 1}^{n_{0}} ( \lambda_{k}^{\star} (h) )^{n} \Pi_{k} d \nu_{h} + \ooo (h) \Vert d \nu_{h} \Vert_{\hhh_h} ,
\end{equation*}
for $n \gtrsim \vert \ln h \vert h^{-1}$. Here, $\Pi_{k}$ is the spectral projector of $\T_{h}^{\star}$ associated to the eigenvalue $\lambda_{k}^{\star} (h)$. If we assume in addition that $n \lesssim e^{2S_{n_0} / h}$, then $( \lambda_{k}^{\star} (h) )^{n} = 1 + \ooo (h)$ for any $k=1,\ldots,n_0$. Thus, the previous equation becomes
\begin{equation} \label{z7}
( \T_{h}^{\star} )^{n} ( d \nu_{h} ) =  \Pi^{(0)} d \nu_{h} + \ooo (h) \Vert d \nu_{h} \Vert_{\hhh_h} ,
\end{equation}
since $\Pi^{(0)} = \Pi_{1} + \cdots + \Pi_{n_{0}}$. Let
\begin{equation*}
g_{k} (x) : = \frac{\chi_{k} ( x ) e^{- ( \phi (x) - \phi ( \m_{k} ) ) / h}}{\Vert \chi_{k} e^{- ( \phi - \phi ( \m_{k} ) ) / h} \Vert} .
\end{equation*}
From \eqref{z8}, we immediately get $g_{k} = f_{k}^{W , (0)} + \ooo (h)$. Moreover, as in \eqref{z9}, we have
\begin{equation*}
\big\Vert f_{k}^{(0)} - f_{k}^{W , (0)} \big\Vert = \big\Vert ( a_h^{-1} - 1 ) f_k^{W,(0)} \big\Vert = \ooo (h) .
\end{equation*}
Combining with Lemma \ref{d5}, we deduce
\begin{equation}
g_{k} = e_{k}^{(0)} + \ooo (h) .
\end{equation}
Using one more time Lemma \ref{d5}, the bases $( e_k^{(0)} )_{k}$ and $( g_{k} )_{k}$ of $\Im \Pi^{(0)}$ and $\Im \Pi$ respectively are almost orthogonal in the sense that
\begin{equation*}
\big\< e_{k}^{(0)} , e_{k '}^{(0)} \big\> = \delta_{k , k '} + \ooo ( h ) \qquad \text{and} \qquad \< g_{k} , g_{k'} \> = \delta_{k , k '} + \ooo ( h ) .
\end{equation*}
It then yields
\begin{equation}
\Pi = \Pi^{(0)} + \ooo (h) ,
\end{equation}
and \eqref{z2} follows from \eqref{z7}.
\end{proof}

\appendix
\section{Quasimodes, truncation procedure and labelling} \label{d1}

In this appendix, we gather from \cite{HeKlNi04_01} and \cite{HeHiSj11_01} the refined construction of quasimodes on $0$-forms for the Witten Laplacian, and the labeling procedure linking each minima with a saddle point of index $1$. We recall briefly the construction proposed in \cite{HeHiSj11_01} (which was in the Fokker-Planck case there) but in a generic situation where all $\phi(\s) - \phi(\m)$ are distinct, for $\m$, and $\s$ are respectively in the set of minima and saddle points of $\phi$.

In the following, we will denote by $\lll(\sigma)=\{x\in\R^n ; \ \phi(x)<\sigma\}$ the sublevel set  associated to the value $\sigma\in\R$.
Let $\s$ be a saddle point of $\phi$ and  $B ( \s , r ) = \{ x \in \R^{n} ; \ \vert x - s \vert <r \}$. Then, for $r > 0$ small enough, the set
\begin{equation*}
B(\s,r)\cap\lll(\phi(\s))=\{ x \in B ( \s , r ) ; \ \phi ( x ) < \phi ( \s ) \} ,
\end{equation*}
has precisely $2$ connected components, $C_{j} ( \s , r )$, with $j = 1 , 2$.

\begin{definition}\sl
We say that $\s \in \R^n$ is a separating saddle point (ssp) if it is either $\infty$  or it is a usual saddle point such that $C_{1} ( \s , r )$ and $C_{2} ( \s , r )$ are contained in different connected components of the set $\{ x \in \R^{n} ; \ \phi ( x ) < \phi ( \s ) \}$. We denote by  $\mathrm{SSP}$ the set of ssps.

We also introduce the set of separating saddle values ssv, $\mathrm{SSV} = \{ \phi ( \s ) ; \ \s \in \mathrm{SSP} \}$ with the convention that $\phi(\infty)=+\infty$.

A connected component $E$ of the sublevel set $\lll(\sigma)$ will be called a critical component if either $\partial E \cap \mathrm{SSP} \neq \emptyset$ or if $E = \R^n$.
\end{definition}

Let us now explain the way we label the critical points. We first order the saddle points in the following way. We recall from \cite{HeKlNi04_01} that $\sharp \mathrm{SSV}=n_0$ and then enumerate the ssvs in a decreasing way: $\infty=\sigma_1>\sigma_2>\cdots>\sigma_{n_0}$. To each ssv $\sigma_j$ we can associate a unique ssp: we define $\s_1=\infty$ and for any $j=2,\ldots,n_0$ we let
$\s_j$ be the unique ssp such that $\phi(\s_j)=\sigma_j$ (note that this $\s_j$ is unique thanks to Hypothesis \ref{h2}).

\medskip

Then we can proceed to the labelling of minima. We denote by $\m_1$ the global minimum of $\phi$,  $E_1 = \R^d$ and  by $S_1 = \phi(\s_1) - \phi(\m_1) = +\infty$ the critical Aarhenius value .

 Next we observe that  the sublevel set $\lll (\sigma_2)  = \{ x \in \R^{n} ; \ \phi(x)<\sigma_2\}$ is the union of two critical components, with one containing $\m_1$.
 The remaining connected component of the sublevel set
$\lll (\sigma_2)$ will be denoted by $E_{2}$ and its minimum $\m_2$. To the pair $(\m_2,\s_2)$ of critical points we associate the Arrhenius value $S_2 =  \phi(\s_2) - \phi(\m_2)$.
\par
Continuing the labelling procedure, we decompose the sublevel set $\lll (\sigma_3)$ into its connected components and perform the labelling as follows: we omit all
those components that contain the already  labelled minima $\m_{1}$ and $\m_{2}$. Some of these components may be non-critical.
There is only one critical one  remaining, and we denote it by $E_{3}$.  We then let $\m_{3}$ be the  point of global minimum of the restriction of $\phi$ to $E_{3}$ and
$S_3 =  \phi(\s_3) - \phi(\m_3)$.

We go on with this procedure, proceeding in the order dictated by the elements of the set $\mathrm{SSV}$, arranged in the decreasing
order, until all $n_0$  local minima $\m$ have been enumerated. In this way we have associated each local minima to one ssp: to each local
minimum $\m_k$, there is one critical component $E_k$, containing $\m_k$, and one ssp $\s_k$. We emphasize that in this procedure some  of the saddle points (the non critical ones)  may  not have been enumerated.
For convenience, we enumerate these remaining saddle points from $n_0+1$ to $n_1+1$. Note that
with this labelling $\uuu^{(1)} = \{ \s_2 , \ldots , \s_{n_1+1} \}$.  We have then
\begin{equation*}
\text{minima} = \{ \m_1 , \ldots , \m_{n_0} \} , \qquad \mathrm{SSP} = \{ \s_1 = \infty, \s_2 , \ldots , \s_{n_0} \} .
\end{equation*}
We summarize the preceding discussion in the following proposition:

\begin{proposition}\sl \label{e6}
The families of minima  $\uuu^{(0)} = \{ \m_k ; \ k = 1 , \ldots , n_0 \}$,  separating saddle points $\{ \s_k ; \ k = 1 , \ldots , n_0 \}$ and connected sets $\{ E_k ; \ k = 1 , \ldots , n_0 \}$ satisfy the following
\begin{enumerate}[i)]
\item We have $\s_1 = \infty$, $E_1=\R^n$ and $\m_1$ is the global minimum of $\phi$.
\item For every $k\geq 2$, $\overline{E_k}$ is compact,
$E_k$ is the connected component containing $\m_k$ in
$$
\{ x \in \R^{n} ; \ \phi ( x ) < \phi ( \s_{k} ) \} ,
$$
and $\phi (\m_k)=\min_{E_k}\phi $.
\item If $\s_{k'} \in E_k$   for some $k , k' \in \{ 1 , \ldots , n_0 \}$, then $k'>k$.
\end{enumerate}
\end{proposition}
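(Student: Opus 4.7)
My plan is to prove the three assertions essentially by unpacking the labeling procedure described in the paragraphs preceding the statement, using an induction on $k \in \{ 1 , \ldots , n_{0} \}$.

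For (i), the base case is tautological: we set $\s_{1} = \infty$ and $\phi ( \s_{1} ) = \sigma_{1} = + \infty$, so $\lll ( \sigma_{1} ) = \R^{n}$ is a single critical connected component which we call $E_{1}$, and $\m_{1}$ is by definition the global minimum of $\phi$ on $\R^{n}$.

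For (ii), fix $k \geq 2$. By the construction, $E_{k}$ is selected as the unique critical connected component of $\lll ( \sigma_{k} )$ containing none of the already labeled $\m_{1} , \ldots , \m_{k-1}$, and $\m_{k}$ is defined as the minimizer of $\phi$ on $E_{k}$; so the last two assertions of (ii) hold by construction. The compactness of $\overline{E_{k}}$ is a consequence of Hypothesis \ref{h1}: the coercivity $\vert \phi (x) \vert \geq c \vert x \vert$ for $\vert x \vert \geq R$ implies that, for any finite level $\sigma$, the sublevel set $\lll ( \sigma )$ is bounded, hence its closure (and the closure of any of its connected components) is compact. Since $k \geq 2$ gives $\sigma_{k} < \sigma_{1} = + \infty$, we are done.

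For (iii), assume that $\s_{k'} \in E_{k}$ with $k, k' \in \{ 1 , \ldots , n_{0} \}$. As $E_{k} \subset \lll ( \sigma_{k} ) = \{ \phi < \sigma_{k} \}$, we get $\phi ( \s_{k'} ) < \sigma_{k}$. On the other hand, by the labeling of separating saddle points, $\phi ( \s_{k'} ) = \sigma_{k'}$. Since the separating saddle values are enumerated in strictly decreasing order $\sigma_{1} > \sigma_{2} > \cdots > \sigma_{n_{0}}$, the inequality $\sigma_{k'} < \sigma_{k}$ forces $k' > k$.

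I do not anticipate any serious obstacle: the proposition essentially records the defining properties of the labeling procedure together with a direct monotonicity argument. The only delicate point, which is however not part of the statement itself, is the well-posedness of the labeling — namely, the fact that at each step there is exactly one ``new'' critical connected component of $\lll ( \sigma_{k} )$ to be selected. This relies on a Morse-theoretic description of how connected components of sublevel sets split as $\sigma$ decreases across a separating saddle value, combined with Hypothesis \ref{h2} which ensures that the values $\phi ( \s_{j} )$, $j = 2 , \ldots , n_{0}$, are pairwise distinct so that only one SSP sits at each level $\sigma_{k}$.
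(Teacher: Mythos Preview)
Your proposal is correct and is essentially the paper's own approach: the paper states no separate proof but introduces the proposition with ``We summarize the preceding discussion in the following proposition,'' so items (i)--(iii) are meant to be read off directly from the labeling construction, exactly as you do. Your monotonicity argument for (iii) and the coercivity argument for compactness in (ii) are the only places where anything beyond unpacking definitions is needed, and both are handled correctly.
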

In oder that the eigenvalues $\lambda_k^\star$ are decreasing , we eventually relabel the pairs of minima and critical saddle points so  that the sequence $S_k$ is decreasing.

\medskip

Using \cite{HeKlNi04_01} and \cite{HeHiSj11_01}, we shall now introduce suitable refined quasimodes,
adapted to the local minima of $\phi$ and the simplified labelling, described in Proposition \ref{e6}.
Let $\varepsilon_0 >0$ be such that the distance between critical points is larger than $10 \varepsilon_0$, and such
that for every critical point ${\bf u}$ and $k \in \{ 1 , \ldots , n_0 \}$ we
have either $\u \in \overline{E_k}$ or $\dist ( \u ,\overline{E_k})\geq 10 \varepsilon_0$. Let also  $C_0 >1$ to be defined
later,   and note that $\varepsilon_0$ may also be taken smaller later.
For $0 < \varepsilon < \varepsilon_0$ we build a family of functions $\chi_{k,\varepsilon}$, $k \in \{ 1 , \ldots , n_0 \}$ as follows: for $k=1$,
 we pose $\chi_{1,\varepsilon}=1$ and  for $k \geq 2$, we consider the open
set $E_{k,\varepsilon} = E_k \setminus \overline{B(\s_k, \varepsilon)}$, and let
$\chi_{k,\varepsilon}$ be a $C_0^\infty$-cutoff function supported in $E_{k,\varepsilon} + B(0,\varepsilon/C_0)$ and
equal to $1$ in $E_{k,\varepsilon} + B(0,\varepsilon /(2C_0))$.
Then, we define the quasimodes
for $1 \leq k \leq n_0$ by
\begin{equation} \label{z8}
f_k^{W,(0)}=  b_k(h) \chi _{k,\varepsilon} (x) e^{-(\phi (x)-\phi (\m_k))/h},
\end{equation}
where $b_k$ is a normalization constant, given thanks to the stationary phase theorem by
$$
b_k(h) = (\pi h)^{-d/4} \det(\hess \phi(\m_k))^{1/4} (b_{k,0} + h b_{k,1} + \cdots ), \ \ \ \ b_{k,0} =1.
$$
Then for $\varepsilon_0$ small
enough and $C_0$ large enough, there exists $C>0$ such that for all
$0< \varepsilon < \varepsilon_0$, we have the following lemma:
\begin{lemma}\sl \label{e7}
 The system $(f_k^{W,(0)})$ is free and there exist $\alpha >0$ uniform in $\varepsilon <\varepsilon_0$,  such that
\begin{equation*}
\big\< f_k^{W,(0)},f_{k'}^{W,(0)} \big\> = \delta_{k,k'} + \ooo(e^{-\alpha/h}), \qquad d_{\phi,h} f_k^{W,(0)}  =  \ooo(e^{-(S_k-C\varepsilon)/h}) ,
\end{equation*}
and in particular
\begin{equation*}
P^{W,(0)} f_k^{W,(0)} = \ooo(e^{-\alpha/h}).
\end{equation*}
\end{lemma}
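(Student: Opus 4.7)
The plan is to verify each of the three estimates through direct calculation using the explicit form $f_k^{W,(0)} = b_k(h) \chi_{k,\varepsilon}(x) e^{-(\phi(x)-\phi(\m_k))/h}$, and to exploit the geometric properties of the cut-offs $\chi_{k,\varepsilon}$ (in particular the fact that $\nabla\chi_{k,\varepsilon}$ is supported where $\phi$ is close to $\phi(\s_k)$). The freeness of the family will follow automatically from the near-orthonormality.

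For the orthonormality estimate, we write
\begin{equation*}
\big\< f_k^{W,(0)}, f_{k'}^{W,(0)} \big\> = b_k(h) b_{k'}(h) \int_{\R^d} \chi_{k,\varepsilon}\chi_{k',\varepsilon}\, e^{-(2\phi-\phi(\m_k)-\phi(\m_{k'}))/h}\, dx.
\end{equation*}
When $k = k'$, the phase $2(\phi - \phi(\m_k))$ has a unique non-degenerate minimum at $\m_k$, which lies in the interior of $\{\chi_{k,\varepsilon} = 1\}$. Laplace's method yields a complete asymptotic expansion in $h$ with leading coefficient $(\pi h)^{d/2}\det(\hess\phi(\m_k))^{-1/2}$; the coefficients $b_{k,j}$, $j \geq 1$, are determined iteratively so that the normalized $b_k(h)$ gives $\| f_k^{W,(0)}\|^2 = 1 + \ooo(e^{-\alpha/h})$, the exponentially small error coming from the fact that $\phi - \phi(\m_k) \geq c > 0$ outside any fixed neighborhood of $\m_k$ inside $\supp(\chi_{k,\varepsilon})$. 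When $k \neq k'$ the key is a positive lower bound for the phase $2\phi - \phi(\m_k) - \phi(\m_{k'})$ on the common support. Since by Proposition \ref{e6} the sets $E_k$ and $E_{k'}$ are either nested or disjoint and $\chi_{k,\varepsilon}$ excludes a ball of radius $\varepsilon$ around $\s_k$, a case analysis shows that either the product support avoids both minima $\m_k, \m_{k'}$ (so $\phi \geq \min(\phi(\s_k),\phi(\s_{k'})) - C\varepsilon$ there) or, taking $k < k'$ with $E_{k'} \subset E_k$ without loss of generality, the product support meets only $\m_{k'}$ (since $\m_k \notin E_{k'}$ by item (iii) of Proposition \ref{e6}), whence $\phi \geq \phi(\m_{k'})$ and $2\phi - \phi(\m_k) - \phi(\m_{k'}) \geq \phi(\m_{k'}) - \phi(\m_k) > 0$. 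In each case Laplace's method provides the required $\ooo(e^{-\alpha/h})$ bound for some $\alpha > 0$ independent of $\varepsilon < \varepsilon_0$.

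For the second estimate, using $d_{\phi,h} = e^{-\phi/h}(h\nabla)e^{\phi/h}$ on $0$-forms (identifying the exterior derivative of a scalar with its gradient), a direct calculation gives
\begin{equation*}
d_{\phi,h} f_k^{W,(0)} = h\, b_k(h)\, e^{-(\phi-\phi(\m_k))/h}\, \nabla\chi_{k,\varepsilon}.
\end{equation*}
The support of $\nabla\chi_{k,\varepsilon}$ is contained in the shell of width $\varepsilon/C_0$ around the topological boundary of $E_{k,\varepsilon} = E_k\setminus\overline{B(\s_k,\varepsilon)}$, which splits into a neighborhood of $\partial E_k \subset \{\phi = \phi(\s_k)\}$ and a neighborhood of $\partial B(\s_k,\varepsilon)$. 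On the first region $\phi \geq \phi(\s_k) - C\varepsilon$ since $|\nabla\phi|$ is bounded, and on the second $\phi \geq \phi(\s_k) - C\varepsilon^2$ since $\s_k$ is a critical point of $\phi$. Hence $\phi - \phi(\m_k) \geq S_k - C\varepsilon$ uniformly on $\supp(\nabla\chi_{k,\varepsilon})$, which combined with the prefactor $h b_k(h) = \ooo(h^{1-d/4})$ yields $\| d_{\phi,h} f_k^{W,(0)}\|_{L^2} = \ooo(e^{-(S_k-C\varepsilon)/h})$.

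For the last estimate, using $P^{W,(0)} = -h^2\Delta + |\nabla\phi|^2 - h\Delta\phi$ on $0$-forms, a direct expansion produces the identity
\begin{equation*}
P^{W,(0)}\big(\chi_{k,\varepsilon} e^{-(\phi-\phi(\m_k))/h}\big) = e^{-(\phi-\phi(\m_k))/h}\big(-h^2\Delta\chi_{k,\varepsilon} + 2h\nabla\phi\cdot\nabla\chi_{k,\varepsilon}\big),
\end{equation*}
in which the singular terms $h^{-2}|\nabla\phi|^2$ and $h^{-1}\Delta\phi$ cancel exactly. The right-hand side is again supported on $\supp(\nabla\chi_{k,\varepsilon})$, so the previous estimate applies, and choosing $\varepsilon \in (0,\varepsilon_0)$ small enough that $\min_k(S_k - C\varepsilon) \geq 2\alpha$ for some $\alpha > 0$ uniform in $\varepsilon < \varepsilon_0$ yields $\| P^{W,(0)} f_k^{W,(0)}\|_{L^2} = \ooo(e^{-\alpha/h})$. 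The main obstacle lies in the off-diagonal analysis in the orthonormality estimate, where one must exploit the nesting or disjointness structure of the sublevel sets $E_k$ dictated by the labelling in Proposition \ref{e6}, together with the precise placement of the cut-off supports, to rule out common points at which the phase $2\phi - \phi(\m_k) - \phi(\m_{k'})$ vanishes; this combinatorial analysis is standard in the Witten-Laplacian framework and follows \cite{HeKlNi04_01}, \cite{HeHiSj11_01} closely.
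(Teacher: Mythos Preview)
Your proof is correct and, unlike the paper---which simply invokes Proposition~5.3 of \cite{HeHiSj11_01}---you carry out the three verifications explicitly. The computations of $d_{\phi,h}f_k^{W,(0)}$ and $P^{W,(0)}f_k^{W,(0)}$ are clean and standard, and your localisation of $\supp(\nabla\chi_{k,\varepsilon})$ to the region where $\phi\geq\phi(\s_k)-C\varepsilon$ is exactly the point. Two minor remarks. First, in the off-diagonal orthogonality analysis, when $E_k$ and $E_{k'}$ are disjoint (say $k<k'$ so $\phi(\s_{k'})<\phi(\s_k)$), one has $\overline{E_{k'}}\subset\{\phi\leq\phi(\s_{k'})\}$ compactly contained in a component of $\{\phi<\phi(\s_k)\}$ distinct from $E_k$, so for $\varepsilon_0$ small enough the supports of $\chi_{k,\varepsilon}$ and $\chi_{k',\varepsilon}$ are genuinely disjoint and the inner product vanishes; your phrasing ``the product support avoids both minima'' is thus vacuously true but a bit misleading. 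Second, the fact that $\m_k\notin E_{k'}$ for $k<k'$ is not item~(iii) of Proposition~\ref{e6} (which concerns saddle points) but rather comes directly from the labelling construction, where each $E_{k'}$ is chosen among the components \emph{not} containing $\m_1,\ldots,\m_{k'-1}$. With these small fixes your argument is complete and self-contained, which is a genuine advantage over the paper's deferral to the literature.
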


\begin{proof} This Lemma is a direct consequence of  the statement and the proof of Proposition 5.3 in  \cite{HeHiSj11_01}.
\end{proof}

\bibliographystyle{amsplain}
\providecommand{\bysame}{\leavevmode\hbox to3em{\hrulefill}\thinspace}
\providecommand{\MR}{\relax\ifhmode\unskip\space\fi MR }
\providecommand{\MRhref}[2]{%
  \href{http://www.ams.org/mathscinet-getitem?mr=#1}{#2}
}
\providecommand{\href}[2]{#2}

\end{document}